\pdfoutput=1 

\documentclass[11pt]{amsart}
\usepackage[
  paper=a4paper,
  text={155mm,230mm},
  centering
]{geometry}

\usepackage{amssymb,amsthm}

\usepackage{enumitem}
\setlist[enumerate]{label=\textup{(\arabic*)}}

\usepackage[color=yellow!90!black]{todonotes}

\usepackage{tikz}
\usetikzlibrary{cd,calc}
\tikzcdset{arrow style=math font}

\usepackage[bookmarks]{hyperref}
\hypersetup{colorlinks=true,linkcolor=blue,citecolor=blue}

\iftrue
\makeatletter
\def\@settitle{%
  \vspace*{-20pt}
  \begin{flushleft}%
    \baselineskip14\p@\relax
    \normalfont\bfseries\LARGE
    \@title
  \end{flushleft}%
}
\def\@setauthors{%
  \begingroup
  \def\thanks{\protect\thanks@warning}%
  \trivlist
  \large \@topsep30\p@\relax
  \advance\@topsep by -\baselineskip
  \item\relax
  \author@andify\authors
  \def\\{\protect\linebreak}%
  \authors
  \ifx\@empty\contribs
  \else
    ,\penalty-3 \space \@setcontribs
    \@closetoccontribs
  \fi
  \normalfont
  \endtrivlist
  \endgroup
}
\def\@setaddresses{\par
  \nobreak \begingroup\raggedright
  \small
  \def\author##1{\nobreak\addvspace\smallskipamount}%
  \def\\{\unskip, \ignorespaces}%
  \interlinepenalty\@M
  \def\address##1##2{\begingroup
    \par\addvspace\bigskipamount\noindent
    \@ifnotempty{##1}{(\ignorespaces##1\unskip) }%
    {\ignorespaces##2}\par\endgroup}%
  \def\curraddr##1##2{\begingroup
    \@ifnotempty{##2}{\nobreak\noindent\curraddrname
      \@ifnotempty{##1}{, \ignorespaces##1\unskip}\/:\space
      ##2\par}\endgroup}%
  \def\email##1##2{\begingroup
    \@ifnotempty{##2}{\smallskip\nobreak\noindent E-mail address%
      \@ifnotempty{##1}{, \ignorespaces##1\unskip}\/:\space
      \ttfamily##2\par}\endgroup}%
  \def\urladdr##1##2{\begingroup
    \def~{\char`\~}%
    \@ifnotempty{##2}{\nobreak\noindent\urladdrname
      \@ifnotempty{##1}{, \ignorespaces##1\unskip}\/:\space
      \ttfamily##2\par}\endgroup}%
  \addresses
  \endgroup
  \global\let\addresses=\@empty
}
\def\@setabstracta{%
  \ifvoid\abstractbox
  \else
    \skip@20\p@ \advance\skip@-\lastskip
    \advance\skip@-\baselineskip \vskip\skip@
    \box\abstractbox
    \prevdepth\z@ 
  \fi
}
\def\@maketitle{%
  \normalfont\normalsize
  \@adminfootnotes
  \@mkboth{\@nx\shortauthors}{\@nx\shorttitle}%
  \global\topskip42\p@\relax 
  \@settitle
  \ifx\@empty\authors \else \@setauthors \fi
  \ifx\@empty\@dedicatory
  \else
    \baselineskip18\p@
    \vtop{\centering{\footnotesize\itshape\@dedicatory\@@par}%
      \global\dimen@i\prevdepth}\prevdepth\dimen@i
  \fi
  \@setabstract
  \normalsize
  \if@titlepage
    \newpage
  \else
    \dimen@20\p@ \advance\dimen@-\baselineskip
    \vskip\dimen@\relax
  \fi
} 

\let\oldtocsection=\tocsection
\let\oldtocsubsection=\tocsubsection
\let\oldtocsubsubsection=\tocsubsubsection
\renewcommand{\tocsection}[2]{\hspace{0em}\oldtocsection{#1}{#2}}
\renewcommand{\tocsubsection}[2]{\hspace{2em}\oldtocsubsection{#1}{#2}}
\renewcommand{\tocsubsubsection}[2]{\hspace{4em}\oldtocsubsubsection{#1}{#2}}

\renewenvironment{abstract}{%
  \ifx\maketitle\relax
    \ClassWarning{\@classname}{Abstract should precede
      \protect\maketitle\space in AMS document classes; reported}%
  \fi
  \global\setbox\abstractbox=\vtop \bgroup
    \normalfont\small
    \list{}{\labelwidth\z@
      \leftmargin0pc \rightmargin\leftmargin
      \listparindent\normalparindent \itemindent\z@
      \parsep\z@ \@plus\p@
      
    }%
    \item[\hskip\labelsep\bfseries\abstractname.]%
}{%
  \endlist\egroup
  \ifx\@setabstract\relax \@setabstracta \fi
}
\def\section{\@startsection{section}{1}%
  \z@{-1.2\linespacing\@plus-.5\linespacing}{.8\linespacing}%
  {\normalfont\bfseries\large}}
\def\subsection{\@startsection{subsection}{2}%
  \z@{-.8\linespacing\@plus-.3\linespacing}{.5\linespacing\@plus.3\linespacing}%
  {\normalfont\bfseries}}
\def\subsubsection{\@startsection{subsubsection}{3}%
  \z@{.7\linespacing\@plus.1\linespacing}{-1.5ex}%
  {\normalfont\bfseries}}
\def\@secnumfont{\bfseries}
\makeatother
\fi 

\makeatletter
\@namedef{subjclassname@2020}{%
  \textup{2020} Mathematics Subject Classification}
\makeatother

\makeatletter
\def\@newl@bel#1#2#3{{%
  \@ifundefined{#1@#2}%
    {\global\@namedef{#1@#2}{#3}}
    {\gdef \@multiplelabels {%
       \@latex@warning@no@line{There were multiply-defined labels}}%
     \@latex@warning@no@line{Label `#2' multiply defined}}%
  }}
\makeatother

\makeatletter
\def\Nopagebreak{\@nobreaktrue\nopagebreak}
\makeatother

\foreach \n in 
{Z,Q,R,C}
{\expandafter\xdef\csname\n\endcsname{\noexpand\mathbb{\n}}}

\foreach \n in 
{A,F}
{\expandafter\xdef\csname b\n\endcsname{\noexpand\mathbb{\n}}}

\foreach \n in 
{A,...,Z}
{\expandafter\xdef\csname c\n\endcsname{\noexpand\mathcal{\n}}}

\foreach \n in 
{dim,sign,sgn,Ker,Coker,Im,Hom,End,GL,SO,
 Tor,Ext,Gal,Irr,dis,Wh,tr,Tr,vol,colim,lk,st}
{\expandafter\xdef\csname\n\endcsname{\noexpand\operatorname{\n}}}

\foreach \n/\s in 
{inte/int}
{\expandafter\xdef\csname\n\endcsname{\noexpand\operatorname{\s}}}

\foreach \n in 
{id}
{\expandafter\xdef\csname\n\endcsname{\noexpand\textrm{\n}}}

\def\sm{\smallsetminus}

\def\cupover#1{\mathbin{\mathop{\cup}\limits_{#1}}}

\def\cl{\operatorname{cl}}
\def\scl{\operatorname{scl}}

\makeatletter
\def\to{\mathchoice{\longrightarrow}{\rightarrow}{\rightarrow}{\rightarrow}}
\newcommand{\shortxra}[2][]{\ext@arrow 0359\rightarrowfill@{#1}{#2}}
\def\longrightarrowfill@{\arrowfill@\relbar\relbar\longrightarrow}
\newcommand{\longxra}[2][]{\ext@arrow 0359\longrightarrowfill@{#1}{#2}}

\makeatother

\makeatletter
\newtheoremstyle{theorem-giventitle}
        {}{}                      
        {\itshape}                
        {}                        
        {\bfseries}               
        {.}                       
        {\thm@headsep}            
        {\thmnote{\bfseries#3}}   
\newtheoremstyle{theorem-givenlabel}
        {}{}                      
        {\itshape}                
        {}                        
        {\bfseries}               
        {.}                       
        {\thm@headsep}            
        {\thmname{#1}~\thmnumber{#3}\setcurrentlabel{#3}}
\newtheoremstyle{definition-giventitle}
        {}{}                      
        {}                        
        {}                        
        {\bfseries}               
        {.}                       
        {\thm@headsep}            
        {\thmnote{\bfseries#3}}   
\def\setcurrentlabel#1{\gdef\@currentlabel{#1}}
\makeatother

\theoremstyle{plain}
\newtheorem{theorem}{Theorem}[section]
\newtheorem{theoremalpha}{Theorem}

\newtheorem{proposition}[theorem]{Proposition}
\newtheorem{lemma}[theorem]{Lemma}
\newtheorem{corollary}[theorem]{Corollary}

\newtheorem{question}[theorem]{Question}

\theoremstyle{definition}
\newtheorem{definition}[theorem]{Definition}
\newtheorem{remark}[theorem]{Remark}

\theoremstyle{definition}
\newtheorem*{assertion}{Assertion}

\theoremstyle{theorem-giventitle}
\newtheorem{theorem-named}{}
\theoremstyle{theorem-givenlabel}
\newtheorem{theorem-labeled}{Theorem}

\theoremstyle{definition-giventitle}
\newtheorem{definition-named}{}
\newtheorem{conjecture-named}{}
\newtheorem{case-named}{}
\newtheorem{step-named}{}

\numberwithin{equation}{section}

\begin{document}

\title
{Quantitative bordism over acyclic groups and Cheeger-Gromov $\rho$-invariants}

\subjclass[2020]{%
  53C23, 
  57Q20, 
  55U10. 
}

\author{Jae Choon Cha}
\address{Center for Research in Topology and Department of Mathematics, POSTECH, Pohang 37673, Republic of Korea}
\curraddr{}
\email{jccha@postech.ac.kr}

\author{Geunho Lim}
\address{Institut de Mathématiques de Jussieu-Paris Rive Gauche, CNRS, Sorbonne Université, Université Paris Cité, 4 Place Jussieu, Paris, 75005, France}
\curraddr{}
\email{lim@imj-prg.fr}


\begin{abstract}
  We obtain a solution to a bordism version of Gromov's linearity problem over a large family of acyclic groups, for manifolds with arbitrary dimension.
  Every group embeds into some acyclic group in this family.
  Thus, the linear bordism problem has an affirmative solution over a possibly enlarged acyclic group.
  Our result holds in both PL and smooth categories, and for both oriented and unoriented cases.
  In the PL case, our results hold without assuming bounded local geometry.
  As an application, we prove that there is a universal linear bound for the Cheeger-Gromov $L^2$ $\rho$-invariants of PL $(4k-1)$-manifolds associated with arbitrary regular covers.
  We also show that the minimum number of simplices in a PL triangulation of $(4k-1)$-manifolds with a fixed simple homotopy type is unbounded if the fundamental group has nontrivial torsion.
  The proof of our main results builds on quantitative algebraic and geometric techniques over the simplicial classifying spaces of groups.
\end{abstract}

\maketitle

\section{Introduction and main results}

In his 1999 article~\cite{Gromov:1999-1}, Gromov introduced a quantitative viewpoint of the study of topology with several fundamental questions.
The underlying motivation is that even when algebraic topology successfully determines the existence of a solution to a given problem, if one attempts to understand the nature of a solution, for instance, its ``size'' or ``complexity,'' interesting new questions often arise.

Specifically, Gromov considered cobordism theory.
Thom's work tells us when a closed $n$-manifold bounds a null-cobordism, but it presents little about the geometric intricacy of a null-cobordism.
For a rigorous formulation, define the \emph{complexity} $V(M)$ of a smooth manifold $M$ to be the infimum of the volume of a Riemannian structure on $M$ with bounded local geometry, i.e.,\ $($injectivity radius$) \ge 1$ and $|$sectional curvature$|\le 1$.
(When $\partial M$ is nonempty, we require that the Riemannian structure restricts to a product on a collar $\partial M\times I$ where $I$ has unit length.)
Gromov asked about the existence of a linear complexity cobordism~\cite{Gromov:1999-1}:
\emph{if a smooth closed $n$-manifold $M$ is null-cobordant, is there a smooth null-cobordism $W$ such that $V(W) \le C(n)\cdot V(M)$ where $C(n)$ is a universal constant depending only on $n=\dim M$?}

A PL version of Gromov's linearity problem can be formulated by using the following definition in place of~$V(-)$.

\begin{definition}\label{definition:PL-complexity}
  For a PL manifold, $M$, we define the \emph{complexity}, $\Delta(M)$, to be the minimal number of simplices in a PL triangulations of~$M$.
\end{definition}

The linear cobordism problem is still open for both smooth and PL cases.
In the smooth case, Chambers, Dotterrer, Manin and Weinberger~\cite{Chambers-Dotterrer-Manin-Weinberger:2018-1} showed that there is a null-cobordism $W$ with complexity arbitrarily close to linear:
$V(W) \in O(V(M)^{1+\epsilon})$ where $\epsilon>0$ is given arbitrary.
For the PL case, Manin and Weinberger proved a similar almost linear result under an additional assumption on local geometry (see Remark~\ref{remark:bounded-geometry-condition} below) in a recent paper~\cite{Manin-Weinberger:2023-1}.

Replacing cobordism with bordism leads to a natural generalization of the problem.
In this paper, we focus on bordism over a group~$G$.
A \emph{manifold over $G$} is a manifold $M$ equipped with a map $M\to BG$, where $BG$ is the classifying space of the group~$G$.
For two manifolds $M$ and $N$ over $G$, a bordism from $M$ to $N$ over $G$ is a compact manifold $W$ over $G$ cobounded by $M$ and $N$ such that the map $W\to BG$ restricts to the given maps $M\to BG$ and $N\to BG$.
If $N$ is empty, $W$ is a null-bordism over~$G$.
A cobordism is a bordism over a trivial group.
A direct analog of the above linear null-cobordism question of Gromov is the following null-bordism question: \emph{if $M$ is null-bordant over $G$, does there exist a null-bordism $W$ over $G$ with $\Delta(W) \le C(n) \cdot \Delta(M)$?}

To formulate a refined version \emph{modulo contributions from cobordism}, we consider a bordism $W$ over $G$ from a given manifold $M$ to another manifold $N$ that is over $G$ via a constant map $N\to BG$.
\[
  \begin{tikzcd}
    M \ar[r,hook] \ar[rd] & W \ar[d] & N \ar[l,hook'] \ar{d}
    \\
    & BG & \{*\} \ar{l}
  \end{tikzcd}
\]
We call such $W$ a \emph{bordism from $M$ to a trivial end}, following~\cite[Definition~3.1]{Cha:2014-1}.
For example, let $G=\Z$ and $M=S^1\times S^1$, equipped with the projection $M\to S^1 = BG$ onto the first factor.
Attach a 2-handle to $M\times I$ along the circle $\{\text{pt}\} \times S^1 \times 1 \subset M\times 1$ to obtain a 3-manifold~$W$.
Then $\partial W=M \cup S^2$, and the map $M\to BG$ extends to $W \to BG$ which restricts to a constant map on~$S^2$.
Thus $W$ is a bordism over $G$ from $M$ to a trivial end.

Note that if Gromov's linear cobordism problem has an affirmative answer, then for a null-cobordant manifold~$M$, a bordism $W$ from $M$ to a trivial end can always be extended to a null-bordism $W'$ over $G$ with $\Delta(W') \le C(n) \cdot \Delta(W)$.
To obtain $W'$, attach to $W$ a null-cobordism with linearly bounded complexity along the trivial end.
Conversely, a null-bordism for $M$ is clearly a bordism to a trivial end.
Thus, the linear null-bordism problem over a group $G$ is equivalent to the combination of the linear null-cobordism problem (over a trivial group) and the following:

\begin{question}[Bordism version of Gromov's linearity problem]
  \label{question:linear-bordism}
  If a closed PL $n$-manifold $M$ over $G$ with $n>1$ admits a PL bordism to a trivial end, then is there a PL bordism $W$ from $M$ to a trivial end over $G$ such that $\Delta(W) \le C(n)\cdot \Delta(M)$, where $C(n)$ is a constant depending only on $n=\dim M$ (independent of $M$, $G$ and $M \to BG$)?
\end{question}

When $G$ is an acyclic group, every closed manifold $M$ over $G$ is bordant to a trivial end, so we have the following special case.

\begin{question}[Question~\ref{question:linear-bordism} for acyclic groups]
  \label{question:linear-bordism-over-acyclic}
  For each closed PL $n$-manifold $M$ over an acyclic group~$G$ with $n>1$,
  is there a PL bordism $W$ from $M$ to a trivial end over $G$ such that $\Delta(W) \le C(n)\cdot \Delta(M)$?
\end{question}

By replacing $\Delta(-)$ by~$V(-)$, the smooth versions of the questions are formulated.
Also, one can consider the oriented and unoriented cases of the questions.
For $n=1$ case of Questions~\ref{question:linear-bordism} and~\ref{question:linear-bordism-over-acyclic}, see Section~\ref{section:questions-for-n=1}.

\subsection{Linear bordism over acyclic groups}

The main result of this paper is an affirmative answer to the linear bordism problem for a large family of acyclic groups.
Our results, stated as Theorems~\ref{theorem:main-quantitative-bordism} and~\ref{theorem:main-quantitative-smooth-bordism} below, immediately imply the following.

\begin{theorem-named}[Corollary to Theorems~\ref{theorem:main-quantitative-bordism} and~\ref{theorem:main-quantitative-smooth-bordism}]
  If $M$ is a closed manifold over a group $G$, then there is a bordism $W$, over a group $\Gamma$ that contains $G$ as a subgroup, from $M$ to a trivial end with linearly bounded complexity.
  That is, the answer to Question~\ref{question:linear-bordism} is affirmative if $G$ is allowed to be enlarged.
  This holds in both PL and smooth categories and for both oriented and unoriented cases.
\end{theorem-named}

Kan and Thurston showed that every group injects into an acyclic group~\cite{Kan-Thurston:1976-1}.
Subsequently, Baumslag, Dyer and Heller presented a functorial embedding~\cite{Baumslag-Dyer-Heller:1980-1}.
It associates an acyclic group $\cA(G)$ to every group $G$, together with a functorial injection $G\hookrightarrow \cA(G)$.
Theorem~\ref{theorem:main-quantitative-bordism} stated below provides a solution to the linear bordism problem for the Baumslag-Dyer-Heller groups~$\cA(G)$.

Recall from Definition~\ref{definition:PL-complexity} that for a PL manifold $M$, $\Delta(M)$ denotes the minimal number of simplices in a triangulation.
When $M$ is triangulated, i.e., equipped with a PL triangulation, we also denote by $\Delta(M)$ the number of simplices in the given triangulation.
We will use this slight abuse of notation throughout the paper.

\begin{theoremalpha}
  \label{theorem:main-quantitative-bordism}
  Let $M$ be a closed triangulated PL $n$-manifold over the acyclic group~$\cA(G)$, where $G$ is an arbitrary group.
  Then there exists a triangulated PL bordism $W$ over $\cA(G)$ from $M$ to a trivial end such that $\Delta(W) \leq C(n) \cdot \Delta(M)$ where $C(n)$ is a constant depending only on~$n$.
  In addition, if $M$ is oriented, then $W$ is an oriented bordism.
\end{theoremalpha}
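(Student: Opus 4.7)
The plan is to convert the problem into a quantitative algebraic filling problem in the simplicial classifying space $B\cA(G)$, and then realize that filling geometrically as a PL manifold bordism with controlled complexity.

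First, I would set things up simplicially. After replacing $M$ by a controlled subdivision, I may assume that the reference map $\phi\colon M\to B\cA(G)$ is simplicial with respect to the standard simplicial model of the classifying space. A bounded number of barycentric subdivisions together with a quantitative simplicial approximation increases $\Delta(M)$ by at most a factor depending only on~$n$.

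Second, I would produce an algebraic null-filling of linear complexity. The Baumslag-Dyer-Heller construction is functorial and canonically equips $B\cA(G)$ with an explicit contracting chain homotopy in positive degrees, reflecting acyclicity of $\cA(G)$. Applying this contraction to the simplicial cycle $\phi_*[M]$ yields a simplicial $(n+1)$-chain $c$ in $B\cA(G)$ with $\partial c = \phi_*[M]$, whose number of simplices is linearly bounded in $\Delta(M)$, with a constant depending only on~$n$.

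Third, I would geometrically realize this chain-level filling as a PL manifold bordism. Attach the mapping cylinder of $\phi$ to a simplicial realization of $c$ to form a compact PL polyhedron $P$ containing $M$ in its boundary. Embed $P$ in a simplicial Euclidean space $\R^K$ of dimension $K=K(n)$, take a regular neighborhood, and apply quantitative PL surgery below the middle dimension so as to arrange that the remaining boundary component $N$ maps to a point in $B\cA(G)$, yielding the desired bordism $W$ with a trivial end. The final bound $\Delta(W)\leq C(n)\cdot\Delta(M)$ then follows by concatenating the complexity estimates from these steps, and each construction preserves compatible orientations when $M$ is oriented.

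The main obstacle is the third step: converting a chain-level null-homology into a PL manifold bordism while maintaining a linear bound on the number of simplices. A direct thickening of $|c|$ is not a manifold, and carrying out PL surgery at uniform per-simplex cost requires a carefully calibrated simplicial framework. I expect the argument to rely on a specific combinatorial model of the Baumslag-Dyer-Heller cone construction, so that the chain-level filling from the second step comes equipped with a distinguished simplicial thickening that is already close to a PL manifold bordism, reducing the remaining work to surgeries of bounded cost per simplex.
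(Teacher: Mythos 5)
Your overall strategy — pass from the given map to a quantitatively controlled chain-level filling in the classifying space, then realize that filling geometrically as a PL bordism — is the right template and is indeed what the paper does. However, two of your three steps contain gaps that the paper's actual argument is designed precisely to avoid.

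First, in your step 1, you propose achieving a simplicial reference map $\phi\colon M\to B\cA(G)$ by ``a bounded number of barycentric subdivisions together with a quantitative simplicial approximation.'' This would require the target to have bounded local geometry, but $B\cA(G)$ (the simplicial classifying space, an infinite complex) has unbounded local geometry. The number of subdivisions needed in a simplicial approximation cannot then be bounded by a constant depending only on $n$. The paper sidesteps this by invoking a result from \cite{Cha:2014-1} (Theorem~\ref{theorem:simplicial-cellular-approximation} here) which says that any map into a simplicial set such as $BG$ is homotopic to a simplicial-\emph{cellular} map \emph{without any subdivision of $M$}; the paper explicitly highlights this as a crucial feature. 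You also first need to factor $M\to B\cA(G)$ through some finite stage $B\bA^k(G)$ using finite generation of $\pi_1(M)$, and apply the quantitative null-homotopy (Theorem~\ref{theorem:quantitative-chain-homotopy-moore}/Corollary~\ref{corollary:quantitative-chain-homotopy-cellular}) to the inclusion $BG'\hookrightarrow B\bA^n(G')$ rather than to a ``contraction of $B\cA(G)$'' directly; a literal contraction of $B\cA(G)$ could have unbounded norm.

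Second, your step 3 — thicken the mapping cylinder of $\phi$ union a realization of the filling chain $c$, embed in $\R^K$, take a regular neighborhood, and do PL surgery below the middle dimension at bounded per-simplex cost — is where the real difficulty lies, and you acknowledge as much. But you do not supply the missing ingredient, and the obstacle is genuine: making the surgery step quantitatively linear is not known to work in that generality, and is essentially the same bottleneck that prevents a naive linear Thom-theoretic argument. The paper instead develops a completely different realization mechanism (Sections~\ref{section:simplicial-cellular-transversality} and~\ref{subsection:bordism-from-chain-null-homotopy}): it constructs, for each top-dimensional simplex $\sigma$ of the target skeleton, the inverse-image submanifold $Y_\sigma=f^{-1}(\hat\sigma)$ and, for each codimension-one face $\tau$, an embedded cobordism $Z_\tau\subset M$ between copies of the $Y_\sigma$ realizing $\partial\sigma$, all with linearly controlled complexity via simplicial-cellular transversality \emph{without subdivision}. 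The chain null-homotopy then selects which products $Y_\sigma\times V_\sigma$, $Z_\tau\times\Delta^p_\eta$, $Y_\sigma\times\Delta^{p+1}_\mu$ to assemble, building a bordism $W_p$ that pushes the reference map down one skeleton level, and these $W_p$ are concatenated over $p=n,\dots,1$. This descent-through-skeleta mechanism with inverse-image submanifolds and embedded cobordisms is the key idea your sketch is missing.
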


From Theorem~\ref{theorem:main-quantitative-bordism}, one readily obtains the PL case of the above corollary.

We discuss some aspects of Theorem~\ref{theorem:main-quantitative-bordism} in the following remarks.

\begin{remark}\sloppy
  \label{remark:bounded-geometry-condition}
  In earlier quantitative studies in the literature (e.g.~\cite{Gromov-Guth:2012-1,Chambers-Dotterrer-Manin-Weinberger:2018-1,Manin-Weinberger:2023-1}), it has been a standard convention to assume a ``bounded (combinatorial) geometry'' condition that there is an upper bound for the number of simplices adjacent to a vertex in a triangulation.
  Our results hold for arbitrary PL triangulations, \emph{without} requiring any bounded geometry hypothesis.
  See Definition~\ref{definition:PL-complexity}.
  This suggests that the answer to the linearity problem may be affirmative for PL manifolds without requiring bounded geometry, as stated in Questions~\ref{question:linear-bordism} and~\ref{question:linear-bordism-over-acyclic}.
\end{remark}

\begin{remark}
  \label{remark:bordism-with-bounded-geometry}
  Theorem~\ref{theorem:main-quantitative-bordism} works in the category of PL manifolds \emph{with} bounded geometry as well.
  For brevity, we say that a simplicial complex has \emph{bounded geometry of type $L$} if each vertex lies in at most $L$ simplices.
  Then in Theorem~\ref{theorem:main-quantitative-bordism}, if $M$ has bounded geometry of type $L$, the bordism $W$ has bounded geometry of type $C'(n)\cdot L$ where $C'(n)$ is a constant depending only on $n=\dim M$.
\end{remark}

\begin{remark}
  \label{remark:PL-null-bordism}
  Assuming bounded geometry, one can enlarge the bordism $W$ from $M$ to a trivial end $N$ in Theorem~\ref{theorem:main-quantitative-bordism} to obtain a null-bordism $W'$ for $M$ with $O(\Delta(M)^{1+\epsilon})$ complexity, by applying Manin-Weinberger's result \cite[Theorem~1.1]{Manin-Weinberger:2023-1} to the trivial end~$N$.
  More precisely, for any fixed $\epsilon>0$, if a given PL $n$-manifold $M$ over $\cA(G)$ has bounded geometry of type $L$, then there is a null-bordism $W'$ over $\cA(G)$ with bounded geometry of type $L'(n,L)$ such that $\Delta(W') \le C(n,L,\epsilon) \cdot \Delta(M)^{1+\epsilon}$.
  Here $L'(n,L)$ and $C(n,L,\epsilon)$ are constants depending only on $(n,L)$ and $(n,L,\epsilon)$ respectively, independent of the manifold $M$ and the target classifying space~$B\cA(G)$.
  This may be compared with results in \cite{Manin-Weinberger:2023-1} that gives an almost linear bound for null-bordism over \emph{finite complexes}~$Y$, where the bound is of the form $C \cdot \Delta(M)^{1+\epsilon}$ with a constant factor $C=C(n,L,\epsilon,Y)$ that depends on the target space~$Y$.
\end{remark}

\subsubsection*{Method of the proof}

The proof of Theorem~\ref{theorem:main-quantitative-bordism} uses two essential ingredients.
They are quantitative techniques in the realm of homological algebra of chain complexes and geometric topology of PL manifolds, respectively.

First, we use a quantitative chain null-homotopy for a given map $M \to B\cA(G)$, where $B\cA(G)$ is the simplicial classifying space of the Baumslag-Dyer-Heller acyclic group~$\cA(G)$.
To obtain this, we depend on results in~\cite{Cha:2014-1}.
See Section~\ref{section:BDH-preliminaries} for further details.

By the Atiyah-Hirzebruch spectral sequence, the existence of such a chain null-homotopy implies that $M$ is bordant to a trivial end.
Our second ingredient is a geometric construction that realizes a given chain null-homotopy as a quantitative bordism over the classifying space $B\Gamma$ of an \emph{arbitrary} discrete group~$\Gamma$.

We remark that a simpler version for 4-dimensional bordism for 3-manifolds $M$ was developed in~\cite{Cha:2014-1}, but the techniques in~\cite{Cha:2014-1} do not directly generalize to high dimensions.
See Remark~\ref{remark:comparison-with-dim-3} for more about this.
Briefly, our results in arbitrary dimensions are obtained as follows.
We develop simplicial transversality techniques and apply them to a given map $M\to B\Gamma^{(p)}$ into the $p$-skeleton of the target classifying space~$B\Gamma$, to obtain submanifolds which are the inverse images of the barycenters of the $p$-simplices of~$B\Gamma$, and more important, PL cobordisms (embedded in $M$) between the submanifolds.
We use those embedded cobordisms to modify $M$ by surgery-like operations, producing a bordism from $M$ to a new manifold that maps to the lower skeleton~$B\Gamma^{(p-1)}$.
Inductively applying this process over the skeleta, we construct a bordism to a trivial end.
Algebraic data from the chain null-homotopy in~\cite{Cha:2014-1} is essential in finding suitable combinations of the geometric surgery-like operations that achieve the above.
We show that the resulting bordism is linear in the size of the algebraic data.

In addition, since the natural cell structure of the simplicial set $B\Gamma$ is not a simplicial complex, we develop techniques for a more general class of  complexes called \emph{simplicial-cell complexes}.
See Sections~\ref{section:simplicial-cellular-transversality} and~\ref{section:quantitative-pl-bordism} for more details.

\begin{remark}
  Our Atiyah-Hirzebruch type approach has some intriguing aspects when it is compared with approaches for quantitative (co)bordism in~\cite{Chambers-Dotterrer-Manin-Weinberger:2018-1,Manin-Weinberger:2023-1} that builds on Thom's theory.

  \begin{enumerate}
    \item Our target space $B\Gamma$, the simplicial classifying space, is an \emph{infinite} complex, while arguments in \cite{Chambers-Dotterrer-Manin-Weinberger:2018-1,Manin-Weinberger:2023-1} work for finite complexes as target spaces.
    Moreover, $B\Gamma$ has \emph{unbounded} local geometry in general. (Recall that our results hold without assuming bounded local geometry.)
    \item The first step in Thom's work is to choose an embedding of a given manifold $M$ in $S^N$, from which one obtains a map of $S^N$ into a Thom space, the target space in this case.
    A key difficulty that prevents linearity in \cite{Chambers-Dotterrer-Manin-Weinberger:2018-1,Manin-Weinberger:2023-1} occurs in this initial step, that is, it is unknown whether there is an efficient embedding for which the Thom map has a linearly bounded Lipschitz constant.
    (The remaining parts of the approach in \cite{Chambers-Dotterrer-Manin-Weinberger:2018-1,Manin-Weinberger:2023-1} can be done linearly.)
    On the other hand, our approach begins with a given $M \to B\Gamma$, and makes use of the fact that for any given triangulation of $M$, it can be realized by a simplicial(-cellular) map $M\to B\Gamma$ \emph{without subdivision}, due to~\cite{Cha:2014-1}.
    (See Section~\ref{section:quantitative-pl-bordism}.)
    This provides linearly controlled initial data for the methods outlined above so that we eventually obtain a linear bordism.
  \end{enumerate}

\end{remark}

\subsubsection*{Smooth quantitative bordism}

By examining how our proof of Theorem~\ref{theorem:main-quantitative-bordism} applies to a piecewise smooth triangulation of a smooth manifold and by smoothing the resulting PL bordism, we obtain a smooth version of Theorem~\ref{theorem:main-quantitative-bordism} which is stated below.
Details of the argument are given in Section~\ref{section:quantitative-smooth-bordism}.

\begin{theoremalpha}
  \label{theorem:main-quantitative-smooth-bordism}
  Let $M$ be a closed smooth $n$-manifold over~$\cA(G)$, with $G$ arbitrary.
  Then there exists a smooth bordism $W$ over $\cA(G)$ from $M$ to a trivial end such that $V(W) \le C(n) \cdot V(M)$, where $C(n)$ is a constant depending only on~$n$.
  In addition, if $M$ is oriented, then $W$ is an oriented bordism.
\end{theoremalpha}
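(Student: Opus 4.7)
The strategy is to reduce Theorem~\ref{theorem:main-quantitative-smooth-bordism} to the PL result of Theorem~\ref{theorem:main-quantitative-bordism}, using the bounded-geometry refinement noted in Remark~\ref{remark:bordism-with-bounded-geometry} and classical smoothing theory. The plan has three stages: first triangulate $M$ efficiently, then run the PL construction, and finally smooth the resulting PL bordism and place a Riemannian structure on it whose volume is controlled by the simplex count.

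For the first stage, I would fix a Riemannian metric on $M$ with bounded local geometry whose volume is within a factor depending only on $n$ of the infimum defining $V(M)$. I then invoke a standard triangulation result for Riemannian manifolds of bounded local geometry (of Cheeger--M\"uller--Schrader / Cairns type, as used in \cite{Chambers-Dotterrer-Manin-Weinberger:2018-1,Manin-Weinberger:2023-1}) to obtain a piecewise smooth triangulation $\tau$ of $M$, with bounded combinatorial geometry of type $L_0=L_0(n)$ and with $\Delta(M,\tau)\le c_0(n)\cdot V(M)$. The composition $M\to B\cA(G)$ is unchanged by this choice.

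For the second stage, I apply Theorem~\ref{theorem:main-quantitative-bordism} to the triangulated PL manifold $(M,\tau)$ over $\cA(G)$, together with the bounded-geometry version in Remark~\ref{remark:bordism-with-bounded-geometry}. This yields a triangulated PL bordism $W_{\mathrm{PL}}$ over $\cA(G)$ from $M$ to a trivial end $N$ with
\[
\Delta(W_{\mathrm{PL}})\le C(n)\cdot\Delta(M,\tau)\le C(n)\,c_0(n)\cdot V(M),
\]
and with bounded combinatorial geometry of type $L_1=C'(n)\cdot L_0$. Up to an initial subdivision inside a collar of $M$, we may arrange that $W_{\mathrm{PL}}$ restricts to the product triangulation $\tau\times I$ on $\partial_- W_{\mathrm{PL}}\times I$, and similarly on a collar of $N$.

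For the third stage, I smooth $W_{\mathrm{PL}}$ rel the given smooth structure on $M$, using Cairns--Whitehead / Munkres PL smoothing theory; the existence and uniqueness of smoothings for PL manifolds in this setting, together with the product collar of $\tau\times I$, let me extend the smooth structure of $M$ across the bordism (the trivial end $N$ admits an arbitrary smoothing since it only maps to a point in $B\cA(G)$). On each closed simplex I put the standard Euclidean metric, rescaled to unit size, pasted across shared faces and then smoothed in a uniform neighborhood of the codimension${}\ge 1$ skeleton; bounded geometry of type $L_1(n)$ means that the smoothing procedure depends only on $n$, so the resulting Riemannian metric on $W$ has injectivity radius $\ge 1$ and sectional curvature bounded by $1$. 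Its volume satisfies
\[
V(W)\le c_1(n)\cdot\Delta(W_{\mathrm{PL}})\le C(n)\cdot V(M),
\]
and on the collars of $M$ and $N$ it is, by construction, a product. Orientations are preserved throughout if $M$ is oriented, since every step either triangulates, surgers, or smooths in an orientation-compatible manner.

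The main obstacle is the compatibility of the three categorical changes at the boundary: the smooth metric on $M$, the piecewise smooth triangulation $\tau$, and the Riemannian metric built on $W$ from its triangulation must match up to give a genuine product collar at $\partial W$. Handling this cleanly requires arranging the initial triangulation $\tau$ to be \emph{piecewise smooth}, taking the triangulation of $W_{\mathrm{PL}}$ to be a product in a collar of $\partial W_{\mathrm{PL}}$, and smoothing so that the collar product structure is preserved; once these pieces are set up, the volume and local-geometry estimates follow from the bounded combinatorial geometry of type $L_1(n)$ and depend only on $n$.
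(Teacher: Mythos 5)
Your overall outline (triangulate $M$ with bounded geometry and linear simplex count, apply Theorem~\ref{theorem:main-quantitative-bordism}, then smooth and endow with a controlled Riemannian metric) matches the paper's strategy, but there is a genuine gap in your third stage.

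You write that you ``smooth $W_{\mathrm{PL}}$ rel the given smooth structure on $M$, using Cairns--Whitehead / Munkres PL smoothing theory; the existence and uniqueness of smoothings for PL manifolds in this setting \ldots\ let me extend the smooth structure of $M$ across the bordism.'' This is precisely what is \emph{not} automatic: a PL manifold of dimension $\ge 8$ need not admit any smoothing, and even if $\partial W_{\mathrm{PL}}$ is smooth there are obstructions (valued in $H^{i+1}(W_{\mathrm{PL}},\partial W_{\mathrm{PL}};\pi_i(PL/O))$) to extending the smooth structure over the interior. Your remark that ``the trivial end $N$ admits an arbitrary smoothing since it only maps to a point'' does not help, since the issue is the smoothability of $W_{\mathrm{PL}}$ itself, not of its trivial end. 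So invoking smoothing theory as an off-the-shelf existence statement does not close the argument; you must actually prove that the specific PL bordism produced by Theorem~\ref{theorem:main-quantitative-bordism} is smoothable.

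The paper fills exactly this hole by tracing through the construction in the proof of Theorem~\ref{theorem:main-quantitative-bordism}: the bordism $W$ is a concatenation of bordisms $W_p$, each built from the transversality submanifolds $Y_\sigma$ and $Z_\tau$ inside the (already smoothable) manifold $M_p$. Since $Y_\sigma$ and $Z_\tau$ come with trivial product normal block bundles (Theorems~\ref{theorem:simplicial-cellular-transversality-submanifold} and~\ref{theorem:simplicial-cellular-transversality-cobordism}), one applies the Hirsch--Mazur product structure theorem for smoothings to their open product neighborhoods to conclude that each $Y_\sigma$ and $Z_\tau$ is smoothable, hence each $W_p$ and finally $W$ is smoothable. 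Without an argument of this kind (or some other structural reason), your claim that the PL bordism can be smoothed is unsupported. The remaining parts of your proposal --- triangulating $M$ with linear complexity and bounded geometry, bounding $\Delta(W_{\mathrm{PL}})$ via Theorem~\ref{theorem:main-quantitative-bordism} and Remark~\ref{remark:bordism-with-bounded-geometry}, and building a bounded-geometry Riemannian metric with per-simplex volume controlled by the combinatorial type --- are essentially the same as in the paper and are fine.
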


\begin{remark}
  As done in Remark~\ref{remark:PL-null-bordism} for the PL case, by combining Theorem~\ref{theorem:main-quantitative-smooth-bordism} with the main result of~\cite{Chambers-Dotterrer-Manin-Weinberger:2018-1}, we obtain an almost linear smooth null-bordism:
  for any $\epsilon>0$ and $n$, there is a constant $C(n,\epsilon)$ such that every smooth $n$-manifold $M$ over $\cA(G)$ admits a smooth null-bordism $W$ over $\cA(G)$ satisfying $V(W) \le C(n,\epsilon) \cdot V(M)^{1+\epsilon}$.    
\end{remark}

\subsection{Linear bound for Cheeger-Gromov \texorpdfstring{$L^2$}{L2} \texorpdfstring{$\rho$}{rho}-invariants}

In the work of Cheeger and Gromov~\cite{Cheeger-Gromov:1985-1,Cheeger-Gromov:1985-2}, the $L^2$ $\rho$-invariant is defined using a Riemannian metric on a closed oriented $(4k-1)$-manifold~$M$:
for a homomorphism $\phi\colon \pi_1(M)\to G$, they define the invariant $\rho^{(2)}(M,\phi)\in \R$ to be the difference of the analytic $\eta$-invariants of the signature operator and the $L^2$ $\eta$-invariant defined using the von Neumann trace on the regular cover associated with the given homomorphism~$\phi$.
It is known that the invariant can also be defined topologically, as the $L^2$-signature defect of a certain $4k$-manifold $W$ bounding $M$, using an index theoretic approach as appeared in~\cite{Chang-Weinberger:2003-1}.
This shows that the $\rho^{(2)}(M,\phi)$ is a topological invariant, and also it enables us to define the $\rho$-invariant for non-smoothable manifolds.

Cheeger and Gromov showed the following existence of a bound for smooth manifolds, which has been used as a key ingredient in their work~\cite{Cheeger-Gromov:1985-1} and several subsequent applications of the $\rho$-invariant.
For each $M$, there is a constant $C_M$ such that
\begin{equation}
  |\rho^{(2)}(M,\phi)| \leq C_M
  \label{equation:cheeger-gromov-inequality}  
\end{equation}
for all homomorphisms $\phi$ of $\pi_1(M) \to G$ to an arbitrary group~$G$\@.
In fact, considering Riemannian structures on $M$ with bounded local geometry, the bound $C_M$ is of the form $C_M = C(n)\cdot V(M)$, where $C(n)$ depends only on $n=\dim M$.
This was a key motivation for Gromov's linearity conjecture.

While the $\rho$-invariant is a topological invariant, Cheeger-Gromov's original proof of the existence of the above bound was completely analytic~\cite{Cheeger-Gromov:1985-1} (see also~\cite{Ramachandran:1993-1}).
This leads us to a natural question: can one understand the Cheeger-Gromov bound topologically?
Understanding the bound from a topological viewpoint is also important for applications, e.g.,\ see~\cite[Remark~6.6]{Cha:2014-1} for applications to knot concordance initiated by~\cite{Cochran-Teichner:2003-1}, and \cite{Cha:2014-1, Cha:2016-1, Lim-Weinberger:2023-1} for applications to complexity of manifolds.

In~\cite{Cha:2014-1}, a topological approach to the Cheeger-Gromov bound was developed, using the $L^2$-signature defect interpretation, focusing on the case of 3-manifolds.
In particular, the following explicit linear bound was given:
if $M$ is an orientable closed 3-manifold, then for any homomorphism $\phi\colon\pi_1(M)\to G$, $|\rho^{(2)}(M,\phi)| \le 363090 \cdot \Delta(M)$.

By using Theorem~\ref{theorem:main-quantitative-bordism}, we obtain a linear bound in all dimensions, generalizing the above result of~\cite{Cha:2014-1}:

\begin{theoremalpha}\label{theorem:main-rho-invariant} 
  If $M$ is a closed oriented PL $(4k-1)$-manifold, then
  \begin{equation}
    \label{equation:rho-invariant-bound}
    |\rho^{(2)}(M,\phi)| \leq C(4k-1) \cdot \Delta(M)
  \end{equation}
  for every homomorphism $\phi\colon \pi_1(M)\to G$ with $G$ arbitrary, 
  where $C(4k-1)$ is a constant depending only on $4k-1=\dim M$.
\end{theoremalpha}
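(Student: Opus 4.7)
The plan is to derive the bound from the quantitative bordism of Theorem~\ref{theorem:main-quantitative-bordism} via the $L^2$-signature defect interpretation of $\rho^{(2)}$. The first step is to replace the coefficient group $G$ by the Baumslag-Dyer-Heller acyclic group $\cA(G)$: composing $\phi$ with the functorial embedding $G\hookrightarrow\cA(G)$ yields $\phi'\colon\pi_1(M)\to\cA(G)$, and the well-known $L^2$-induction property of the von Neumann trace (invariance of $\rho^{(2)}$ under subgroup inclusions) gives $\rho^{(2)}(M,\phi)=\rho^{(2)}(M,\phi')$. Hence it suffices to bound the right-hand side.

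Next, I would apply Theorem~\ref{theorem:main-quantitative-bordism} to $(M,\phi')$, producing an oriented triangulated PL bordism $W$ over $\cA(G)$ from $M$ to a trivial end $N$, with $\Delta(W)\le C(4k)\cdot\Delta(M)$. Since the map $\pi_1(N)\to\cA(G)$ is trivial, the associated regular cover is a split cover (a disjoint union of copies of $N$), and a direct comparison of $L^2$ and ordinary $\eta$-invariants gives $\rho^{(2)}(N,\mathrm{triv})=0$. Applying the topological $L^2$-signature defect description of $\rho^{(2)}$~\cite{Chang-Weinberger:2003-1} to the bordism $W$ then yields
\begin{equation*}
\rho^{(2)}(M,\phi') - \rho^{(2)}(N,\mathrm{triv}) = \mathrm{sign}^{(2)}_{\cA(G)}(W) - \mathrm{sign}(W),
\end{equation*}
so $\rho^{(2)}(M,\phi')=\mathrm{sign}^{(2)}_{\cA(G)}(W)-\mathrm{sign}(W)$.

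Finally, both the ordinary signature and the $\cA(G)$-equivariant $L^2$-signature of $W$ are bounded in absolute value by the corresponding middle-dimensional Betti numbers (the ordinary $b_{2k}(W)$ and the $L^2$-Betti number of the regular $\cA(G)$-cover $\widetilde W$), each of which is at most the number of $2k$-simplices of $W$ and hence at most $\Delta(W)$. Combining these estimates,
\begin{equation*}
|\rho^{(2)}(M,\phi)| = |\rho^{(2)}(M,\phi')| \le 2\Delta(W) \le 2C(4k)\cdot\Delta(M),
\end{equation*}
so the theorem holds with $C(4k-1)=2C(4k)$.

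Given Theorem~\ref{theorem:main-quantitative-bordism}, the argument is essentially formal and there is no serious remaining obstacle. The only technical points to verify are the standard inequality $|\mathrm{sign}^{(2)}_\Gamma(W)|\le \#\{2k\text{-simplices of }W\}$ via the $L^2$-Hodge decomposition together with the elementary bound on $L^2$-Betti numbers by simplicial chain ranks, and the subtlety that in the PL category $M$ itself need not be null-bordant; the latter is handled by the Chang-Weinberger topological definition of $\rho^{(2)}$, for which the bordism $W$ over $\cA(G)$ supplied by Theorem~\ref{theorem:main-quantitative-bordism} is exactly the data that is needed.
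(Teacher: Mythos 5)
Your proposal is correct and follows essentially the same route as the paper's proof: reduce to the acyclic coefficient group by $L^2$-induction, apply Theorem~\ref{theorem:main-quantitative-bordism} to obtain a linearly bounded bordism $W$ to a trivial end, express $\rho^{(2)}$ as the $L^2$-signature defect of $W$, and bound the two signatures by middle-dimensional chain ranks. The only cosmetic differences are that the paper applies Theorem~\ref{theorem:main-quantitative-bordism} over $\cA(\pi_1(M))$ and then pushes forward via the functorial map $\cA(\phi)\colon\cA(\pi_1(M))\to\cA(G)$ (rather than composing $\phi$ with $G\hookrightarrow\cA(G)$ first, as you do), it bounds the signature terms by the number of middle-dimensional \emph{handles} arising from the triangulation rather than directly by $2k$-simplex counts, and it notes explicitly that one may assume $M$ connected since $\rho^{(2)}$ and $\Delta$ are additive under disjoint union — a small point you implicitly use when referring to $\pi_1(M)$.
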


Also, a Riemannian manifold with bounded local geometry admits a piecewise smooth triangulation with complexity linearly bounded by the volume~\cite[Theorem~3]{Boissonnat-Dyer-Ghosh:2018-1} (see also~\cite[Theorem~C$'$]{Chambers-Dotterrer-Manin-Weinberger:2018-1}).
Thus one obtains the Cheeger-Gromov bound~\eqref{equation:cheeger-gromov-inequality} for smooth manifolds equipped with arbitrary~$\phi$, as a consequence of Theorem~\ref{theorem:main-rho-invariant}\@.

\begin{remark}
  Theorem~\ref{theorem:main-rho-invariant} holds for any PL triangulation, without requiring bounded (combinatorial) geometry, similarly to Theorem~\ref{theorem:main-quantitative-bordism}\@.
\end{remark}

\begin{remark}
  In~\cite{Lim-Weinberger:2023-1}, Weinberger and the second author use the results and techniques developed in this paper to obtain bounds for the classical (non-$L^2$) $\rho$-invariants over finite groups.
  They also develop an independent approach to a bound of $\rho^{(2)}(M,\phi)$ for 
  isomorphisms $\phi\colon \pi_1(M)\to \pi_1(M)$ (consequently for \emph{injections} $\phi\colon \pi_1(M)\hookrightarrow G$ by $L^2$-induction) using a hyperbolization technique in the same paper~\cite{Lim-Weinberger:2023-1}.
  Theorem~\ref{theorem:main-rho-invariant} gives a more general linear bound for arbitrary homomorphisms $\phi\colon \pi_1(M) \to G$.
\end{remark}

\begin{remark}
  The linear bound in Theorem~\ref{theorem:main-rho-invariant} is asymptotically optimal in every dimension:
  if we denote by $B(c)$ the supremum of $|\rho^{(2)}(M,\phi)|$ over all $(M,\phi)$ with $M$ a PL $n$-manifold ($n=4k-1$) such that $\Delta(M) \le c$ and $\phi\colon \pi_1(M)\to G$ arbitrary, then we have
  \[
    C_1(n) \le \limsup_{c\to\infty} \frac{B(c)}{c} \le C_2(n)
  \]
  for some positive constants $C_1(n)$, $C_2(n)$ depending only on~$n$.
  Theorem~\ref{theorem:main-rho-invariant} gives the upper bound~$C_2(n)$.
  The existence of the lower bound $C_1(n) > 0$ is obtained from known values of $\rho^{(2)}$ for lens spaces, following the argument in~\cite[Proof of Theorem~1.6]{Cha:2014-1}.
\end{remark}

\subsubsection*{Applications: complexity bounds from $\rho$-invariants}

Estimating the complexity $\Delta(M)$ of a given PL manifold $M$ is a challenging problem (e.g.\ see~\cite{Jaco-Rubinstein-Tillman:2013-1}).
For an upper bound for the complexity, the problem is essentially to construct an efficient triangulation.
Finding a lower bound is, perhaps arguably, more difficult and interesting.
One can view an inequality of the form in Theorem~\ref{theorem:main-rho-invariant} as a lower bound for the complexity~$\Delta(M)$ given in terms of the $\rho$-invariant.
Applications based on this appeared in~\cite{Cha:2014-1} for the first time, especially for 3-dimensional lens spaces, and then in~\cite{Cha:2016-1} for more general classes of 3-manifolds.
Subsequently, the case of high dimensional lens spaces was considered in~\cite{Lim-Weinberger:2023-1}\@.
These earlier results can also be recovered by using Theorem~\ref{theorem:main-rho-invariant}.

In what follows, we discuss some further applications of the $\rho$-invariants to the study of complexity.
As an example, we address the complexity of connected sum, focusing on the $r$-fold connected sum $\#^r M$ of a closed connected PL $n$-manifold~$M$.
One readily sees that the following subadditivity holds:
\begin{equation}
  \label{equation:sum-subadditivity}
  \Delta(\#^r M) \le r\cdot \Delta(M).  
\end{equation}
Note that the equality in~\eqref{equation:sum-subadditivity} does not hold in general (for instance for $M=S^n$).
It is natural to ask whether there is a lower bound of the form
\begin{equation}
  \label{equation:sum-lower-bound-conjecture}
  \Delta(\#^r M) \ge C(n) \cdot r\cdot \Delta(M)
\end{equation}
for compact $n$-manifolds~$M \ne S^n$, where $C(n)>0$ is a constant.
For $n = 2$, one can see that \eqref{equation:sum-lower-bound-conjecture} holds by using (the classification of compact connected surfaces and) the inequality $\Delta(M) \ge \dim_{\Z_2} H_1(M;\Z_2)$.
To the knowledge of the authors, \eqref{equation:sum-lower-bound-conjecture} is unknown for $n\ge 3$.

Motivated by this, we consider the following stabilized version with respect to the connected sum.
Define the \emph{sum-stable complexity} $\Delta^{\text{st}}(M)$ of a PL manifold $M$ by
\[
  \Delta^{\text{st}}(M) = \lim_{r\to\infty} \frac{\Delta(\#^r M)}{r}.
\]
From the subadditivity~\eqref{equation:sum-subadditivity}, it follows that the limit always exists (see, e.g.,~\cite[Appendix~A]{Livingston:2010-1}) and satisfies $\Delta^{\text{st}}(M) \le \Delta(M)$.
We conjecture that there is a constant $C(n)>0$ such that $\Delta^{\text{st}}(M) \ge C(n)\cdot\Delta(M)$ for every PL $n$-manifold~$M\ne S^n$.
Note that $\Delta^{\text{st}}(S^n)=0$.
The (conjectural) inequality \eqref{equation:sum-lower-bound-conjecture} implies this conjecture, and if this conjecture holds for $M$, then $\Delta^{\text{st}}(M)>0$.
The following result gives evidence:

\begin{theoremalpha}
  \label{theorem:sum-stable-complexity-bound}
  Let $M$ be a closed orientable PL $(4k-1)$-manifold.
  Then
  \[
    \Delta^{\text{st}}(M) \ge C(4k-1) \cdot |\rho^{(2)}(M,\phi)|
  \]
  for all homomorphisms $\phi$ of $\pi_1(M)$, where $C(4k-1)>0$ is a constant that depends only on~$4k-1$.
  Consequently, $\Delta^{\text{st}}(M)>0$ if $\rho^{(2)}(M,\phi)\ne 0$ for some~$\phi$.
\end{theoremalpha}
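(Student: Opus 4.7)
The plan is to combine Theorem~\ref{theorem:main-rho-invariant} with the well-known additivity of the $L^2$ $\rho$-invariant under connected sum, then pass to the limit defining $\Delta^{\text{st}}$. Given a homomorphism $\phi\colon \pi_1(M)\to G$, I would first use van Kampen's theorem to identify $\pi_1(\#^r M) \cong \pi_1(M) * \cdots * \pi_1(M)$ ($r$ free factors), and then define $\phi_r\colon \pi_1(\#^r M)\to G$ by the universal property of the free product, setting $\phi_r$ equal to $\phi$ on each factor.

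Next I would invoke the connected-sum additivity
\[
  \rho^{(2)}(\#^r M,\phi_r) = r\cdot \rho^{(2)}(M,\phi).
\]
This is a standard consequence of the $L^2$-signature defect formulation of $\rho^{(2)}$ used earlier in the paper: an appropriate bounding $4k$-manifold $(W,\psi)$ for $(M,\phi)$ can be boundary-connect-summed $r$ times along $M$ to give a bounding manifold for $(\#^r M,\phi_r)$ whose $L^2$-signature defect, by additivity of both the ordinary signature and the $L^2$-signature under boundary-connect-sum, is exactly $r$ times the original defect. Applying Theorem~\ref{theorem:main-rho-invariant} to $(\#^r M,\phi_r)$ now gives
\[
  r\cdot |\rho^{(2)}(M,\phi)| \;=\; |\rho^{(2)}(\#^r M,\phi_r)| \;\le\; C(4k-1)\cdot \Delta(\#^r M).
\]
Dividing by $r$ and letting $r\to\infty$, the right-hand side tends to $C(4k-1)\cdot \Delta^{\text{st}}(M)$ by the definition of the sum-stable complexity, yielding the claimed inequality (after renaming the constant). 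The ``Consequently'' clause is then immediate: if $\rho^{(2)}(M,\phi)\ne 0$ for some $\phi$, then $\Delta^{\text{st}}(M) \ge C(4k-1)\cdot |\rho^{(2)}(M,\phi)|>0$.

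The main thing to check carefully — though I do not expect it to be a serious obstacle — is the connected-sum additivity of $\rho^{(2)}$ for a homomorphism assembled from a free product into a common target group $G$. This formula is classical and appears in several places in the literature on $L^2$-signatures and $\rho$-invariants, but one should verify that the regular cover associated with $\phi_r$ decomposes in the expected way over the free product and that no cross-terms arise in the $L^2$-signature computation. Granting this, the proof reduces to the two-line argument above.
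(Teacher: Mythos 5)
Your proposal is correct and follows essentially the same route as the paper: identify $\pi_1(\#^r M)$ with the $r$-fold free product, establish $\rho^{(2)}(\#^r M,\phi_r)=r\cdot\rho^{(2)}(M,\phi)$, apply Theorem~\ref{theorem:main-rho-invariant} to $\#^r M$, and divide by $r$. The only cosmetic difference is in justifying the additivity: the paper uses the bordism from the disjoint union $\bigsqcup^r M$ to $\#^r M$ obtained by attaching $r-1$ one-handles (whose signature and $L^2$-signature vanish for lack of middle-dimensional handles), whereas you boundary-connect-sum bounding $4k$-manifolds; these are equivalent, and your worry about cross-terms is resolved exactly as you suspect.
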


For example, for the lens space $L_N=L(N;1,\ldots,1)$ of dimension $4k-1$ ($k\ge 1$) with $\pi_1(L_N)=\Z_N$, Theorem~\ref{theorem:sum-stable-complexity-bound} yields the following.
For more details, see Section~\ref{section:bounds-cheeger-gromov-rho}.

\begin{theorem-named}[Corollary to Theorem~\ref{theorem:sum-stable-complexity-bound}]
  For all $N>2$,
  \[
    C(4k-1)\cdot \Delta(L_N) \le \Delta^{\mathrm{st}}(L_N) \le \Delta(L_N)
  \]
  where $C(4k-1)>0$ is a constant depending only on $4k-1=\dim L_N$.
\end{theorem-named}

We finish the introduction with another application, which shows that
there exist manifolds with arbitrarily large complexity in a fixed (simple) homotopy type if the fundamental group has nontrivial torsion.

\begin{theoremalpha}
  \label{theorem:unbounded-complexity-fixed-homotopy-type}
  Let $M$ be a closed orientable PL (resp.\ smooth) $n$-manifold such that $\pi_1(M)$ has a nontrivial finite order element, $n=4k-1$, $k>1$.
  Then there is a closed orientable PL (resp.\ smooth) $n$-manifold $N$ which is simple homotopy equivalent to $M$ and has arbitrarily large complexity, detected by the $\rho$-invariant.
\end{theoremalpha}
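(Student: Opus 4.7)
The plan is to combine a surgery-theoretic construction of manifolds in a fixed simple homotopy type whose Cheeger-Gromov $\rho$-invariants grow without bound, going back to Chang-Weinberger's work on $\rho$ as a secondary surgery obstruction, with the linear upper bound for $|\rho^{(2)}|$ supplied by Theorem~\ref{theorem:main-rho-invariant}.

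The first step is to produce the source of unbounded $\rho$-invariants. Since $\pi_1(M)$ has a nontrivial element $g$ of finite order and $\dim M = 4k-1$ with $k>1$, we are in the high-dimensional simply connected surgery range. I would build a sequence of closed orientable PL (resp.\ smooth) $n$-manifolds $N_r$, each simple homotopy equivalent to $M$, together with homomorphisms $\phi_r\colon \pi_1(N_r)\to G_r$, satisfying $|\rho^{(2)}(N_r,\phi_r)|\to\infty$. Concretely, one exploits the action of $L_{4k}(\Z[\pi_1(M)])$ on the simple structure set $\cS^s(M)$ in the surgery exact sequence: elements in the image of $L_{4k}(\Z[\langle g\rangle])\to L_{4k}(\Z[\pi_1(M)])$ coming from the finite cyclic subgroup generated by $g$ act nontrivially on $\cS^s(M)$, and the Atiyah-Singer $G$-signature theorem (equivalently, the $L^2$-index formula applied to a suitable cobordism) shows that the difference $\rho^{(2)}(N_r,\phi_r)-\rho^{(2)}(M,\phi)$ is computed by characters of $\langle g\rangle$ on the $L$-group element realized. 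Choosing the $L$-group classes to grow in norm (e.g.\ by iterating the action $r$ times), one arranges $|\rho^{(2)}(N_r,\phi_r)|\to \infty$ while keeping the simple homotopy type of~$M$ fixed.

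The second step is immediate from our main estimate. In the PL case, Theorem~\ref{theorem:main-rho-invariant} applied to each $N_r$ yields
\[
  \Delta(N_r) \;\ge\; \frac{|\rho^{(2)}(N_r,\phi_r)|}{C(4k-1)} \;\longrightarrow\; \infty.
\]
In the smooth case, one replaces this with the smooth Cheeger-Gromov bound $|\rho^{(2)}(N_r,\phi_r)|\le C(n)\cdot V(N_r)$ (either the classical analytic estimate, or the version deducible from Theorem~\ref{theorem:main-quantitative-smooth-bordism} via piecewise smooth triangulation as cited just after Theorem~\ref{theorem:main-rho-invariant}), to conclude $V(N_r)\to\infty$. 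In both categories the complexity of the representatives in the simple homotopy class of $M$ is unbounded, and the unboundedness is by construction detected by the $\rho$-invariant.

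The main obstacle lies entirely in the first step: one must realize the desired $L_{4k}$-classes within the \emph{simple} structure set (not just the homotopy structure set) and verify that they produce unbounded $\rho^{(2)}$. This is precisely where the hypothesis $k>1$ and the torsion assumption on $\pi_1(M)$ are essential: the surgery machinery requires $n\ge 7$ to realize algebraic $L$-group elements by geometric normal bordisms rel boundary, and the presence of torsion is what makes the induced map $L_{4k}(\Z[\pi_1(M)])\to \R$ given by $\rho^{(2)}$ have unbounded image (for trivial or torsion-free $\pi_1$, $L^2$-signature invariants of surgery kernels would vanish by the $L^2$-index theorem). Once this Chang-Weinberger-type ingredient is in hand, the passage from $\rho$-invariants to complexity lower bounds is a direct application of Theorem~\ref{theorem:main-rho-invariant}.
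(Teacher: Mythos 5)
Your proposal is correct and follows essentially the same route as the paper: realize classes in the image of $L^s_{4k}(\Z[\Z_d]) \to L^s_{4k}(\Z[\pi_1(M)])$ by Wall realization, compute the resulting jump in $\rho^{(2)}$ via the $L^2$-signature defect and $L^2$-induction, and then read off a complexity lower bound from Theorem~\ref{theorem:main-rho-invariant} (or its smooth analogue). The only thing the paper supplies that you leave implicit is the explicit witness that $\sign^{(2)}_{\Z_d}$ is unbounded on $L^s_{4k}(\Z[\Z_d])$ — namely, $L^s_{4k}(\Z[\Z_d])$ agrees modulo torsion with the $L$-group of symmetric forms over projective $\Q[\Z_d]$-modules, and the rank-one form $[1]$ on $\Q$ has $\sign^{(2)}_{\Z_d} = \tfrac1d - 1 \neq 0$, so additivity gives unboundedness.
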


Note that the fundamental group condition in Theorem~\ref{theorem:unbounded-complexity-fixed-homotopy-type} is necessary: if one allows trivial or torsion-free fundamental groups, the homotopy type of $M$ may contain only finitely many (PL or smooth) manifolds $N$ (e.g.\ when $M=S^n$, $T^n$ for $n\ge 5$) so that their complexity is bounded.

\subsubsection*{Organization of the paper}

In Section~\ref{section:simplicial-cellular-transversality}, we develop transversality methods for simplicial-cell complex target spaces, which give inverse image submanifolds and embedded cobordism, with quantitative analysis of the complexity.
In Section~\ref{section:BDH-preliminaries}, we discuss some properties of the Baumslag-Dyer-Heller acyclic groups and quantitative chain homotopy.
In Section~\ref{section:quantitative-pl-bordism}, we prove Theorem~\ref{theorem:main-quantitative-bordism}, using results in Sections~\ref{section:simplicial-cellular-transversality} and~\ref{section:BDH-preliminaries}\@.
In Section~\ref{section:quantitative-smooth-bordism}, we prove Theorem~\ref{theorem:main-quantitative-smooth-bordism}, which is a smooth version of Theorem~\ref{theorem:main-quantitative-bordism}\@.
In Section~\ref{section:bounds-cheeger-gromov-rho}, we discuss how our bordism result applies to obtain Theorems~\ref{theorem:main-rho-invariant}, \ref{theorem:sum-stable-complexity-bound} and~\ref{theorem:unbounded-complexity-fixed-homotopy-type}\@.
In Section~\ref{section:questions-for-n=1}, we discuss Questions~\ref{question:linear-bordism} and~\ref{question:linear-bordism-over-acyclic} for $n=1$.

\subsubsection*{Acknowledgements}

The authors thank F. Manin and S. Weinberger for helpful conversations, as well as anonymous referees for thoughtful comments.
The key technique in the proof of Theorem~\ref{theorem:main-quantitative-bordism} was discovered when the first named author visited the Max Planck Institute for Mathematics in Bonn in~2017.
He appreciates the hospitality and support of the MPIM\@.
This work was partially supported by the National Research Foundation of Korea grant 2019R1A3B2067839.
The second author was also partially supported by Horizon Europe ERC Grant number:101045750/Project acronym:HodgeGeoComb.

\section{Submanifolds and cobordism from simplicial-cellular transversality}
\label{section:simplicial-cellular-transversality}

To develop quantitative methods, we use triangulations of PL manifolds and view them as simplicial complexes.
Recall that for a triangulated PL manifold $M$, the \emph{complexity} $\Delta(M)$ is defined to be the number of simplices (of any dimension) in the triangulation.
Note that $\Delta(M)$ is equivalent to the number of top dimensional simplices up to constants depending only on the dimension:
if $\dim M=n$ and $M$ has $k$ $n$-simplices, then $k \le \Delta(M) \le 2^n \cdot k$.

In this paper, we also encounter non-manifolds which require a generalized notion of triangulation.
The key example is the geometric realization of a simplicial set.
For those, we use the notion of a \emph{simplicial-cell complex}, following~\cite{Cha:2014-1}.
Each cell of a simplicial-cell complex, which we call a simplex, is identified with a standard simplex.
A simplicial complex is a simplicial-cell complex.
We also use the notion of \emph{simplicial-cellular maps} between simplicial-cell complexes, which generalizes simplicial maps between simplicial complexes.
Refer to Section~\ref{subsection:simplicial-cellular-subdivision} for more details on simplicial-cell complexes and simplicial-cellular maps, including the definitions.

In this section, we show transversality results for a simplicial-cellular map $f\colon X\to P$ of a triangulated PL manifold $X$ to a simplicial-cell complex~$P$.
The main outcome is inverse image submanifolds and cobordisms between them with linearly bounded complexity.
It has somewhat distinct aspects when compared with the usual PL transversality approaches in the literature.
First, the usual transversality theorems perturb the given map $f\colon X\to P$ slightly to arrange it transverse to a given point (or submanifold) in~$P$, but perturbation often requires subdivision to make the map simplicial, and it potentially increases the complexity beyond the desired growth.
To avoid this, we show that $f$ is automatically transverse to interior points of every top-dimensional simplex of the target space~$P$, without perturbing $f$ or subdividing $X$ or~$P$.
Second, since the target space $P$ is not a simplicial complex in our case, we need to deal with additional technical sophistication, which was not considered in the literature.
We will describe relevant details in this section.

In what follows, $\hat\sigma$ denotes the barycenter of a simplex~$\sigma$.
For a simplex $\sigma=[v_0,\ldots,v_p]$, let $\sigma_0=[w_0,\ldots,w_p]$ be the simplex in the interior of $\sigma$ spanned by the vertices $w_i = \frac12(v_i+\hat\sigma)$.
We will often view $\sigma_0$ as a regular neighborhood of $\hat\sigma_0 = \hat\sigma$.

\begin{theorem}[Submanifold from simplicial-cellular transversality]
  \label{theorem:simplicial-cellular-transversality-submanifold}
  Suppose that $X$ is a simplicial complex with $\dim X=n$, $P$ is a simplicial-cell complex with $\dim P=p$ and $f\colon X\to P$ is a simplicial-cellular map.
  For each top-dimensional simplex $\sigma$ of $P$ (i.e.\ $\dim \sigma = p$), let $Y_\sigma = f^{-1}(\hat\sigma)$.

  \begin{enumerate}

    \item\label{item:simplicial-cellular-transversality}
    The subset $Y_\sigma$ is a subpolyhedron and has a regular neighborhood $N$ in $X$ such that $(N,Y_\sigma)$ is PL homeomorphic to $Y_\sigma \times (\sigma_0,\hat\sigma)$ and the restriction $f|_N$ is the projection $Y_\sigma \times \sigma_0 \to \sigma_0 \subset P$.

    \item\label{item:transversality-linear-complexity}
    Each $Y_\sigma$ has a triangulation with complexity $\Delta(Y_\sigma)$ such that
    \[
      \Delta\biggl(\bigcup_\sigma Y_\sigma\biggr) = \sum_{\sigma} \Delta(Y_\sigma) \le C(n) \cdot \Delta(X)
    \]
    where $\sigma$ varies over all $p$-simplices of $P$ and $C(n)$ is a constant depending only on the dimensions~$n$.

    \item\label{item:transversality-submanifold}
    If $X$ is a triangulated PL $n$-manifold, then $Y_\sigma$ is a locally flat PL $(n-p)$-submanifold properly embedded in~$X$.

  \end{enumerate}

\end{theorem}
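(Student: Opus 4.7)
The plan is to analyze $f$ simplex-by-simplex, exploiting that a simplicial-cellular map restricts to each simplex $\tau$ of $X$ as an affine surjection onto some simplex of $P$ (via its characteristic map). For a fixed top-dimensional $\sigma$ of $P$, only those simplices $\tau$ of $X$ with $f(\tau)=\sigma$ can meet $Y_\sigma$, since the image $f(\tau)$ is a single simplex of $P$ and $\hat\sigma$ lies in the interior of $\sigma$. For such $\tau$, I would partition the vertices $V(\tau)=I_0\sqcup\cdots\sqcup I_p$ according to which vertex of $\sigma$ each maps to. A point $x=\sum t_i u_i\in\tau$ then satisfies $f(x)=\hat\sigma$ precisely when $\sum_{i\in I_j}t_i=\frac{1}{p+1}$ for every $j$, and $f(x)\in\sigma_0$ precisely when these vertex-group sums lie in the corresponding small simplex around $\frac{1}{p+1}$. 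This exhibits $Y_\sigma\cap\tau$ as a product of simplices $\prod_{j=0}^{p}\Delta^{|I_j|-1}$ of total dimension $\dim\tau-p$, and $f^{-1}(\sigma_0)\cap\tau$ as this product times $\sigma_0$ with $f|_\tau$ equal to projection onto the $\sigma_0$ factor.

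For part (1), I next verify that these product parametrizations glue consistently over faces. If $\tau'<\tau$ and $f(\tau')=\sigma$, then the partition of $V(\tau')$ is the restriction of that of $V(\tau)$, so the two product descriptions agree on $\tau'$; if instead $f(\tau')$ is a proper face of $\sigma$, then $\hat\sigma\notin f(\tau')$ and $Y_\sigma\cap\tau'=\emptyset$, consistent with the boundary of the product on $\tau$. Assembling the local pieces, $N:=f^{-1}(\sigma_0)$ becomes the desired regular neighborhood, with $(N,Y_\sigma)$ PL homeomorphic to $Y_\sigma\times(\sigma_0,\hat\sigma)$ and $f|_N$ the projection.

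For part (2), the complexity bound follows from two observations: each simplex $\tau$ of $X$ contributes to at most one $Y_\sigma$, namely $\sigma=f(\tau)$ when $\dim f(\tau)=p$; and the product $\prod_j\Delta^{|I_j|-1}$ with $\sum_j|I_j|\le n+1$ admits a canonical triangulation by at most $C(n)$ simplices, once one fixes a standard triangulation scheme (for example, the staircase triangulation of a product of simplices) for each combinatorial type of vertex partition. These local triangulations are coherent on faces by the gluing analysis of part (1), so they assemble into a global triangulation of $\bigsqcup_\sigma Y_\sigma$. Summing the local bounds over all $\tau\subset X$ yields $\sum_\sigma\Delta(Y_\sigma)\le C(n)\cdot\Delta(X)$.

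For part (3), when $X$ is a PL $n$-manifold the star of any $x\in Y_\sigma$ in $X$ is a PL $n$-ball, and the local product structure from part (1) identifies $f$ near $x$ with an affine projection $\R^n\to\R^p$, so $Y_\sigma$ is a locally flat PL $(n-p)$-submanifold properly embedded in $X$. The main obstacle I expect is not any single step in isolation but keeping the triangulations across all simplices coherent with a uniform quantitative bound; this is handled by choosing, once and for all, a canonical triangulation for each of the finitely many combinatorial types of products $\prod_j\Delta^{|I_j|-1}$ with $\sum_j|I_j|\le n+1$, which gives the uniform constant $C(n)$.
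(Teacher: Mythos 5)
Your treatment of parts \ref{item:simplicial-cellular-transversality} and \ref{item:transversality-linear-complexity} is essentially the paper's own argument: your vertex partition $V(\tau)=I_0\sqcup\cdots\sqcup I_p$ is the join decomposition $\tau=A_0\cdots A_p$ with $A_j=\eta_\tau^{-1}(e_j)$ used in the paper, the product cells $\prod_j\Delta^{|I_j|-1}$ are the cells $E_\tau=Y_\sigma\cap\tau$, and a globally coherent canonical triangulation of these cells (the paper uses a ``pulling triangulation'' with respect to a fixed vertex order, which plays the role of your staircase triangulations) gives the bound $C(n)\cdot\Delta(X)$. One small caveat for (2): fixing a triangulation ``for each combinatorial type'' is not by itself enough to guarantee agreement on shared faces; you need the choices to be induced by a single global vertex ordering, but this is a routine fix.

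Part \ref{item:transversality-submanifold} is where there is a genuine gap. You argue that ``the local product structure from part (1) identifies $f$ near $x$ with an affine projection $\R^n\to\R^p$.'' It does not. Part (1) gives a product structure only in the \emph{normal} direction: a neighborhood of $x\in Y_\sigma$ in $X$ is $(\text{nbhd of }x\text{ in }Y_\sigma)\times\sigma_0$. To upgrade this to an affine chart $\R^{n-p}\times\R^p$ you must already know that a neighborhood of $x$ in $Y_\sigma$ is a PL $(n-p)$-ball --- which is precisely what is to be proved. A priori $Y_\sigma$ is just a polyhedron assembled from the linear disks $E_\tau$, and near a point of a low-dimensional simplex $A$ the set $Y_\sigma\cap\st(A)$ is a union of many such disks glued along faces; nothing in part (1) rules out this union failing to be locally Euclidean, or being a manifold that is knotted in $\st(A)$ (the codimension here is $p$, which can be $2$). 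The paper's proof of (3) is the substantive step: it shows by induction over the simplices $B$ of $\lk(A)$ that $Y_\sigma\cap\st(A)$ is ambient isotopic rel $E_A$ to the join $E_A\cdot\lk(A)$, using that each $E_{A\cdot B}$ and each $E_A\cdot B$ is a \emph{linearly embedded, hence unknotted} disk in the simplex $A\cdot B$ together with isotopy extension; this identifies $(\st(A),Y_\sigma\cap\st(A))$ with the unknotted pair $(D^n,D^{n-p})$ and simultaneously handles proper embedding (the case $\lk(A)$ a disk, i.e.\ $A\subset\partial X$), which your proposal does not address at all. Some argument of this kind --- or at minimum a link/suspension argument showing $Y_\sigma$ is a PL manifold before invoking the product neighborhood --- is needed; as written, your one-sentence justification assumes the conclusion.
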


The next theorem provides cobordism between the submanifolds $Y_\sigma$ with linearly bounded complexity.
To state it, we use the following convention and notation.
Suppose $f\colon X\to P$ is as in Theorem~\ref{theorem:simplicial-cellular-transversality-submanifold}.
Fix an orientation of each simplex of~$P$.
Orient $\sigma_0$ by the orientation of~$\sigma$.
When $X$ is oriented, orient the submanifold $Y_\sigma=f^{-1}(\hat\sigma)$ by comparing the product orientation of the product neighborhood $Y_\sigma\times \sigma_0$ in Theorem~\ref{theorem:simplicial-cellular-transversality-submanifold}~\ref{item:simplicial-cellular-transversality} with the orientation of~$X$.
Also, regard $Y_\sigma$ as framed by the product regular neighborhood $Y_\sigma \times \sigma_0$.
For a non-negative integer $k$, denote $k$ parallel copies of~$Y_\sigma$ taken along the framing by~$k Y_\sigma$.

Write the boundary of a $p$-simplex $\sigma$ of $P$ ($p=\dim P$) as $\partial\sigma = \sum_\tau d_{\tau\sigma} \tau$ in the cellular chain complex $C_*(P)$, where $\tau$ varies over $(p-1)$-simplices of $P$ and $d_{\tau\sigma}\in \Z$.

\begin{theorem}[Cobordism from simplicial-cellular transversality]
  \label{theorem:simplicial-cellular-transversality-cobordism}
  Suppose that $X$ is a triangulated PL $n$-manifold with empty boundary and $f\colon X\to P$ is a simplicial-cellular map to a simplicial-cell complex~$P$ with $p=\dim P$.

  \begin{enumerate}

    \item \label{item:inverse-image-cobordism}
    For each $(p-1)$-simplex $\tau$ of~$P$, there is a framed locally flat PL $(n-p+1)$-submanifold $Z_\tau$ in~$X$ which is bounded by the framed submanifold $\bigcup_\sigma k_{\tau\sigma} Y_\sigma$ where $k_{\tau\sigma}$ is a nonnegative integer such that $k_{\tau\sigma} \equiv d_{\tau\sigma} \bmod 2$ and $k_{\tau\sigma} \le p+1$.
    The $Z_\tau$ are pairwise disjoint.
    \item \label{item:inverse-image-oriented-cobordism}
    In addition, if $X$ is oriented (so that each $Y_\sigma$ is oriented as above), $Z_\tau$ can be oriented such that the boundary orientation is positive on $\frac{k_{\tau\sigma}+d_{\tau\sigma}}{2} Y_\sigma$ and negative on $\frac{k_{\tau\sigma}-d_{\tau\sigma}}{2} Y_\sigma$, i.e.,\ $\partial Z_\tau = \bigcup_\sigma d_{\tau\sigma} Y_\sigma$ algebraically.
    \item \label{item:cobordism-complexity}
    Each $Z_\tau$ has a triangulation such that $\Delta(\bigcup_\tau Z_\tau) = \sum_\tau \Delta(Z_\tau) \le C(n) \cdot \Delta(X)$ where $C=C(n)$ is a constant depending only on the dimension~$n$.

  \end{enumerate}

\end{theorem}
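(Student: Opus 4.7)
The plan is to realize each $Z_\tau$ as essentially the preimage under $f$ of a one-dimensional subpolyhedron $T_\tau\subset P$ centered at $\hat\tau$, using the local product structure produced by Theorem~\ref{theorem:simplicial-cellular-transversality-submanifold}. First I set up a local model in~$P$. Each $p$-simplex $\sigma$ of $P$ has exactly $p+1$ geometric $(p-1)$-faces, and by definition $k_{\tau\sigma}$ counts those identified with $\tau$; in particular $\sum_\tau k_{\tau\sigma} = p+1$. Fix a small $p$-ball $B_\sigma$ around $\hat\sigma$ inside $\sigma_0$. For each geometric face $\tau^\sigma_j$ of $\sigma$ identified with $\tau$ in~$P$ ($j=1,\dots,k_{\tau\sigma}$), I pick a distinct point $x^{\tau\sigma}_j\in\partial B_\sigma$ and draw a PL arc $a^{\tau\sigma}_j\subset \sigma$ from $x^{\tau\sigma}_j$ to the barycenter $\hat\tau^\sigma_j$ on that face; since there are only $p+1$ arcs per $\sigma$ and $\dim\sigma\ge 1$, I arrange them pairwise disjoint. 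After identification in~$P$, the union $T_\tau := \bigcup_{\sigma,j} a^{\tau\sigma}_j$ is a star with central vertex $\hat\tau$ and $\sum_\sigma k_{\tau\sigma}$ leaves, and $T_\tau\cap T_{\tau'}=\emptyset$ when $\tau\ne\tau'$.

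I then set $Z_\tau := f^{-1}(T_\tau)$. Away from $\hat\tau$, the argument of Theorem~\ref{theorem:simplicial-cellular-transversality-submanifold}(\ref{item:simplicial-cellular-transversality}) applied to each arc shows that $f^{-1}(a^{\tau\sigma}_j)$ is a product cobordism whose two ends are a parallel copy of $Y_\sigma$ at $x^{\tau\sigma}_j$ (inside the framed neighborhood $Y_\sigma\times\sigma_0$) and a copy of $Y_\tau := f^{-1}(\hat\tau)$. All these $Y_\tau$-ends coincide in $X$ because $\hat\tau^\sigma_j=\hat\tau$ in~$P$. Since $X$ is a closed PL manifold, every codimension-one face of an $n$-simplex of $X$ supporting a local piece of $Y_\tau$ is shared by exactly two $n$-simplices, and a direct local analysis shows that the assembled $Z_\tau$ is a PL $(n-p+1)$-submanifold of $X$ with $\partial Z_\tau=\bigcup_\sigma k_{\tau\sigma} Y_\sigma$ in terms of the framed parallel copies indexed by the $x^{\tau\sigma}_j$. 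Different $Z_\tau$ are disjoint because the starting points on each $\partial B_\sigma$ are distinct and $Y_\tau\cap Y_{\tau'}=\emptyset$ for $\tau\ne\tau'$. A framing for $Z_\tau$ in $X$ comes from pulling back a normal neighborhood of $T_\tau$ in~$P$.

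In the oriented case, I orient each arc $a^{\tau\sigma}_j$ from $x^{\tau\sigma}_j$ to $\hat\tau^\sigma_j$ and transport the orientation of $X$ onto $Z_\tau$ through the framed product structure. A local sign calculation then shows that the induced boundary orientation on the $Y_\sigma$ copy at $x^{\tau\sigma}_j$ agrees with the standard one precisely when the orientation of $\tau^\sigma_j$ induced from $\sigma$ matches the chosen orientation of~$\tau$; summing over $j$ recovers exactly the signed coefficient $d_{\tau\sigma}$, so that $\partial Z_\tau=\sum_\sigma d_{\tau\sigma} Y_\sigma$ algebraically. For the complexity bound, each $n$-simplex $\Delta$ of $X$ mapping onto some $\sigma$ contributes to $Z_\tau$ only for those $\tau$ appearing as a face of $\sigma$ (at most $p+1$ values of $\tau$), and in each case the contribution is a sub-polyhedron of $\Delta$ of combinatorial size depending only on~$n$. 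Combined with Theorem~\ref{theorem:simplicial-cellular-transversality-submanifold}(\ref{item:transversality-linear-complexity}), choosing compatible triangulations yields $\sum_\tau \Delta(Z_\tau)\le C(n)\cdot \Delta(X)$.

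The main obstacle I anticipate is confirming that $Z_\tau = f^{-1}(T_\tau)$ genuinely is a PL manifold near $Y_\tau$, where $T_\tau$ itself branches at~$\hat\tau$. This reduces to a local verification: the link of $\hat\tau$ in $T_\tau$ consists of $\sum_\sigma k_{\tau\sigma}$ points, one per geometric face identification, while the link of a generic point of $Y_\tau$ in $X$ is a sphere partitioned consistently with the same identifications, because the two $n$-simplices of $X$ sharing each relevant $(n-1)$-face map into the two adjacent $p$-simplices of~$P$. The two local pictures match, so the branching of $T_\tau$ at $\hat\tau$ is absorbed into an honest manifold structure on the preimage. This step uses crucially that $X$ is a PL manifold without boundary and that $f$ is a simplicial-cellular map in the sense developed in this section.
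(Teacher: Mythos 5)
Your underlying set $Z_\tau = f^{-1}(T_\tau)$ is essentially the same as the paper's (your star $T_\tau$ is the paper's $r^{-1}(\hat\tau)$, where $r$ is a pseudo-radial retraction of each punctured $p$-simplex onto its boundary), and your orientation discussion is in the right spirit. But there is a genuine gap precisely at the point you flag as "the main obstacle," and the heuristic you offer does not close it. The claim that $f^{-1}(T_\tau)$ is a locally flat, framed PL $(n-p+1)$-submanifold near $Y_\tau=f^{-1}(\hat\tau)$ is not a matter of a routine "local verification": $\hat\tau$ sits in a $(p-1)$-simplex of $P$, which is not top-dimensional, so Lemma~\ref{lemma:basic-facts-barycenter-inverse} and Theorem~\ref{theorem:simplicial-cellular-transversality-submanifold} do not apply to $f^{-1}(\hat\tau)$. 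In fact the simplices of $X$ contributing to $Y_\tau$ are of mixed types (some $n$-simplices mapping onto $p$-simplices $\sigma\supset\tau$, some lower-dimensional simplices mapping onto $\tau$ itself, possibly with a single $(p-1)$-simplex appearing as several faces of the same $\sigma$), and your picture --- "two adjacent $p$-simplices across each $(n-1)$-face" --- does not account for all of these cases. Similarly, "pulling back a normal neighborhood of $T_\tau$" to obtain the framing presupposes a transversality statement at $\hat\tau$ that you have not established; the product-neighborhood conclusion of Theorem~\ref{theorem:simplicial-cellular-transversality-submanifold}~\ref{item:simplicial-cellular-transversality} is proved only at barycenters of top-dimensional simplices.

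The paper resolves exactly this difficulty by a change of target rather than a local analysis: after a controlled top-dimensional subdivision $P'$ of $P$ (and the corresponding subdivision $X'$ of $X$ from Lemma~\ref{lemma:simplicial-cellular-subdivision}), it defines the simplicial-cellular retraction $r\colon P\setminus\bigcup_\sigma\inte\sigma_0\to P^{(p-1)}$ and sets $g=r\circ f|_E\colon E\to P^{(p-1)}$ on the exterior $E$. Since $\tau$ is now a top-dimensional simplex of $P^{(p-1)}$ and $g$ is simplicial-cellular, Theorem~\ref{theorem:simplicial-cellular-transversality-submanifold}~\ref{item:transversality-submanifold} applies directly to give that $Z_\tau=g^{-1}(\hat\tau)$ is a properly embedded locally flat submanifold, with the framing and the boundary identification falling out of parts~\ref{item:simplicial-cellular-transversality}. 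This is the missing idea in your argument. It also supplies the complexity estimate: your count of contributions per $n$-simplex of $X$ is plausible but informal, whereas the paper gets it by running Theorem~\ref{theorem:simplicial-cellular-transversality-submanifold}~\ref{item:transversality-linear-complexity} on $g$ together with the uniform subdivision bound of Lemma~\ref{lemma:simplicial-cellular-subdivision}. I would recommend restructuring your proof around the retraction $g$ so that the manifold structure and complexity bound both become consequences of the earlier theorem instead of ad hoc local claims.
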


The remaining part of this section is devoted to the proof of Theorems~\ref{theorem:simplicial-cellular-transversality-submanifold} and~\ref{theorem:simplicial-cellular-transversality-cobordism}.
In Section~\ref{subsection:simplicial-cellular-subdivision}, we recall the definition of a simplicial-cell complex and prove a subdivision lemma.
In Sections~\ref{subsection:simplicial-cellular-transversality-submanifold} and~\ref{subsection:simplicial-cellular-transversality-cobordism}, we prove Theorems~\ref{theorem:simplicial-cellular-transversality-submanifold} and~\ref{theorem:simplicial-cellular-transversality-cobordism}, respectively.

\subsection{Simplicial-cell complexes and quantitative subdivision}
\label{subsection:simplicial-cellular-subdivision}

For the reader's convenience, we recall the definition of a simplicial-cell complex~\cite[Definition~3.6]{Cha:2014-1}.

\begin{definition}
  \label{definition:simplicial-cell-complex}
  A CW complex $X$ is a \emph{pre-simplicial-cell complex} if each $n$-cell is equipped with a characteristic map $(\Delta^n,\partial\Delta^n) \to (X^{(n)},X^{(n-1)})$, where $\Delta^n$ is the standard $n$-simplex, so that an open $n$-cell is identified with~$\inte\Delta^n$, the standard open simplex.
  We call an (open) $n$-cell of a pre-simplicial-cell complex an \emph{(open) $n$-simplex}.
  A map $f\colon X\to Y$ between pre-simplicial-cell complexes $X$ and $Y$ is \emph{simplicial-cellular} if $f$ sends each open simplex of $X$ onto an open simplex of $Y$, and the restriction of $f$ on an open simplex extends to a linear surjection between standard simplices $\Delta^k\to \Delta^\ell$ sending vertices to vertices.
  A pre-simplicial-cell complex is a \emph{simplicial-cell} complex if the attaching map $\partial\Delta^n \to X^{(n-1)}$ is simplicial-cellular for each simplex.
  That is, the restriction of $\partial\Delta^n \to X^{(n-1)}$ on the interior of a proper face of $\Delta^n$ is onto an open cell and induced by a linear surjection $\Delta^k\to \Delta^\ell$ sending vertices to vertices.
\end{definition}

For a simplex $\sigma$ of a simplicial-cell complex $X$, we denote the characteristic map by $\phi_\sigma\colon \Delta^n \to X$.

The geometric realization of a simplicial set $X$ is a simplicial-cell complex whose simplices are in 1-1 correspondence with non-degenerated simplices of~$X$~\cite{Milnor:1957-3} (see also~\cite[Appendix~A]{Cha:2014-1} for more details).
So, when we view a simplicial set $X$ as a simplicial-cell complex, a simplex means a non-degenerated simplex of~$X$.

\begin{remark}
  The geometric realization of a simplicial set has an additional property:
  for each proper face of an $n$-simplex $\sigma$, the restriction of the attaching map $\partial\Delta^n \to X^{(n-1)}$ on the face is induced by a linear surjection $\Delta^k\to \Delta^\ell$ which preserves the \emph{order} of vertices.
  (Hatcher calls such a complex a singular $\Delta$-complex~\cite{Hatcher:2002-1};
  one may view a simplicial-cell complex as an unordered version, which one could call an ``unordered singular $\Delta$-complex.'')
  In this paper, our arguments do not require the orderedness of vertices.
\end{remark}

Let $L$ be a simplicial-cell complex with $\dim L = p$.
We will use the following form of subdivision, which subdivides top-dimensional simplices only.
For each $p$-simplex $\sigma$ of~$L$, remove it from $L$, choose a (simplicial complex) subdivision $L'_\sigma$ of the simplicial complex $\Delta^p$ which does not subdivide faces of $\Delta^p$ (i.e.,\ all new vertices are in $\inte\Delta^p$) and attach $L'_\sigma$ to the $(p-1)$-skeleton $L^{(p-1)}$ along the attaching map $\partial\Delta^p \to L^{(p-1)}$ of $\sigma$.
(Here $L'_\sigma$ may be $\Delta^p$ itself, i.e., we allow unsubdivided $p$-simplices.)
Let $L'$ be the resulting complex.
Since $\partial\Delta^p$ is not subdivided in $L'_\sigma$, $L'$ is a simplicial-cell complex.
We call $L'$ a \emph{top-dimensional subdivision} of~$L$.

\begin{lemma}
  \label{lemma:simplicial-cellular-subdivision}
  Let $f\colon K\to L$ be a simplicial-cellular map of a simplicial complex $K$ to a simplicial-cell complex $L$ with $k=\dim K$, $p=\dim L$, and let $L'$ be a top-dimensional subdivision of~$L$.
  Suppose that $K$ has $n$ simplices and every $p$-simplex of $L$ is subdivided into at most $d$ simplices in~$L'$.
  Then there is a (simplicial complex) subdivision $K'$ of $K$ such that $f\colon K'\to L'$ is simplicial-cellular and $K'$ has at most $C \cdot n$ simplices, where $C = C(k,p,d)$ is a constant depending only on $k$, $p$ and~$d$.
\end{lemma}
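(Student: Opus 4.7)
The plan is to pull back the subdivision $L'$ through $f$ to get a polyhedral refinement of $K$, triangulate it compatibly by coning on interior points, and bound the number of simplices in terms of $k$, $p$, and~$d$.

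First I would note that since $L'$ differs from $L$ only in the interiors of $p$-simplices, a simplex $\tau$ of $K$ requires refinement only if $\sigma := f(\tau)$ is a nontrivially subdivided $p$-simplex of $L$; in particular, if $\dim f(\tau) < p$, then $\dim f(\rho) < p$ for every face $\rho \subseteq \tau$, so $f|_\tau$ is already simplicial-cellular into $L'$. For each $\tau$ with $f(\tau) = \sigma$ a $p$-simplex, $f|_\tau$ is modeled by a linear surjection $g_\tau\colon \Delta^{\dim \tau} \to \Delta^p$ sending vertices to vertices, and pulling back the cell structure of $L'_\sigma$ under $g_\tau$ produces a polyhedral subdivision $P_\tau$ of $\tau$ with at most $d$ top-dimensional cells. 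The crucial face-compatibility observation is that whenever $\rho$ is a face of $\tau$ with $f(\rho) = \sigma$ as well, the map $g_\rho$ is the restriction of $g_\tau$, so the pullbacks match: $P_\rho = P_\tau \cap \rho$. This glues the $P_\tau$ into a well-defined polyhedral refinement of $K$.

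Next I would promote this refinement to a simplicial subdivision $K'$ by induction on the dimension $j$ of simplices of $K$. For $j \le p$, any linear surjection between $p$-simplices sending vertices to vertices is a bijection, so the pulled-back subdivision of a $p$-simplex is just a relabeled copy of $L'_\sigma$ and is already simplicial. For $j > p$, each polyhedral cell of $P_\tau$ arrives with a simplicial triangulation of its boundary inherited from the $(j-1)$-skeleton, and I would extend it to a simplicial triangulation of the cell by coning from the barycenter. Face compatibility guarantees that these coning extensions agree on $\partial\tau$, so the outcome is a bona fide simplicial complex $K'$ refining both $K$ and the polyhedral pullback. By construction, $f\colon K' \to L'$ sends each open simplex of $K'$ onto an open simplex of $L'$ via a linear vertex-to-vertex map, hence is simplicial-cellular.

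To obtain the complexity bound, I would observe that each $\tau \in K$ contributes only a bounded number of simplices to~$K'$: the pullback has at most $d$ polyhedral cells; the vertices of each cell are drawn from $g_\tau^{-1}$ of the vertex set of $L'_\sigma$ together with one coning barycenter per cell, giving a combinatorial type depending only on~$(k,p,d)$; coning then produces a number of simplices bounded in terms of the boundary complexity, which is again controlled by $(k,p,d)$. Summing over the $n$ simplices of $K$ yields $\Delta(K') \le C(k,p,d) \cdot n$. The main obstacle I anticipate is verifying that the coning procedure can be executed consistently across faces of $K$ while staying uniformly quantitative; this is resolved precisely by the identity $P_\rho = P_\tau \cap \rho$ above, together with the fact that barycenters of polyhedral cells are intrinsic to the cells and therefore automatically compatible with any face inclusion.
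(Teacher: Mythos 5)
Your construction of the polyhedral pullback cell structure and the face-compatibility identity $P_\rho = P_\tau \cap \rho$ match the paper, but the step where you promote this cell complex to a simplicial complex by \emph{coning from barycenters} is where the argument breaks. If $E = g_\tau^{-1}(\lambda)$ is a cell (with $\lambda$ a simplex of $L'_\sigma$ of dimension $\ge 1$), then the barycenter $b_E$ lies in the interior of $E$, and $f$ carries $b_E$ to an interior point of $\lambda$ --- never a vertex of $L'$. Any simplex of your proposed $K'$ that has $b_E$ as a vertex therefore cannot be sent by $f$ onto a simplex of $L'$ via a linear vertex-to-vertex surjection, so the resulting map $f\colon K'\to L'$ fails to be simplicial-cellular. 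This is not a fixable cosmetic issue in the coning approach; it is the reason the paper uses a different triangulation scheme.

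The paper resolves this by applying the \emph{pulling triangulation} (Hudson's Lemma~1.4): one fixes an order on the existing vertices of the cell complex and triangulates each cell $E$ inductively as the join of its least vertex with the already-triangulated union of faces not containing that vertex. Crucially, this introduces \emph{no new vertices}. One then checks --- and your argument implicitly needs but does not supply --- that every vertex of a pullback cell $E = g_\tau^{-1}(\lambda)$ maps under $f$ to a vertex of $\lambda$: a vertex $w=\sum_j\mu_j x_j$ of $E$ (join coordinates with respect to the partition of the vertices of $\tau$ by their $g_\tau$-images) must have each $x_j$ a vertex, and if $g_\tau(w)$ were not a vertex of $\lambda$ one could write $g_\tau(w)$ as a nontrivial convex combination $sa+(1-s)b$ inside $\lambda$ and lift it to exhibit $w$ as a nontrivial convex combination inside $E$, contradicting extremality. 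With that fact, a pulling triangulation of $\mathcal K$ has all its vertices among vertices that $f$ sends to vertices of $L'$, giving simplicial-cellularity, and the complexity bound follows by estimating that each pullback cell has at most $2^{k+p+2}$ vertices (it is cut out by at most $k+p+2$ linear inequalities), so its pulling triangulation has at most $2^{2^{k+p+2}}$ simplices, for a total constant $C = d\cdot 2^{2^{k+p+2}}$.
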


\begin{proof}
  In our $K'$, simplices of $K$ that $f$ sends to $L^{(p-1)}$ will be left unsubdivided.
  Those will be denoted by~$A$ in this proof.
  Denote by $B$ a simplex of $K$ such that $f(B)=\sigma$ is a $p$-simplex $\sigma$ of~$L$, i.e.,
  $f|_B = \phi_\sigma \circ \eta_B$ for some linear surjection $\eta_B\colon B \to \Delta^p$ sending vertices to vertices.
  (Recall that $\phi_\sigma\colon \Delta^p\to L$ is the characteristic map.)
  We will subdivide $B$ by generalizing an argument in~\cite[Lemma~1.8]{Hudson:1969-1} (see also \cite[Lemma~2.16]{Rourke-Sanderson:1972-1}).
  Recall that a subdivision $L'_\sigma$ of $\Delta^p$ is used to subdivide~$\sigma$.
  For each simplex $\tau$ of~$L'_\sigma$, $\eta_B^{-1}(\tau)$ is a linear convex cell in~$B$ (i.e.,\ convex hull of finitely many points).
  We claim that the collection of cells
  \[
    \cK = \{A \mid f(A)\subset L^{(p-1)} \} \cup
    \{\eta_B^{-1}(\tau) \mid \text{$f(B)=\sigma$, $\dim\sigma=p$, $\tau$ is a simplex of $L'_\sigma$} \}
  \]
  is a cell complex structure on (the underlying space of)~$K$ in the sense of~\cite[p.~5]{Hudson:1969-1}, \cite[p.~14]{Rourke-Sanderson:1972-1}.
  It means that
  \begin{enumerate}[label=(\roman*)]
    \item\label{item:cell-complex-face}
    faces of a cell in~$\cK$ are in $\cK$, and
    \item\label{item:cell-complex-intersection}
    the intersection of two cells in $\cK$ is in~$\cK$.
  \end{enumerate}
  First, we will verify~\ref{item:cell-complex-face}.
  If $f(A)\subset L^{(p-1)}$, any face $A_1$ of $A$ is sent into $L^{(p-1)}$ too, so $A_1\in\cK$.
  A face of $\eta_B^{-1}(\tau)$ is of the form $B_1\cap \eta_B^{-1}(\tau_1)$ where $B_1$ and $\tau_1$ are (not necessarily proper) faces of $B$ and~$\tau$.
  If $\eta_B(B_1)=\Delta^p$, then $\eta_{B_1} = \eta_B|_{B_1}$ and thus $B_1\cap \eta_B^{-1}(\tau_1) =\eta_{B_1}^{-1}(\tau_1)$ is in~$\cK$.
  If $\eta_B(B_1)$ is a proper face of $\Delta^p$, then since the subdivision $L_\sigma'$ of $\sigma$ does not subdivide proper faces of $\Delta^p$, $\eta_B(B_1)\cap \tau_1$ is a face of the simplex $\eta_B(B_1)$ (or empty).
  It follows that $B_1\cap \eta_B^{-1}(\tau_1)$ is a face of $B_1$ (or empty).
  In particular, $B_1\cap \eta_B^{-1}(\tau_1)$ is a simplex of~$K$.
  Since $\eta_B(B_1) \subset \partial\Delta^p$, $f(B_1\cap \eta_B^{-1}(\tau_1)) \subset f(B_1)\subset L^{(p-1)}$, and thus the simplex $B_1\cap \eta_B^{-1}(\tau_1)$ is in~$\cK$.
  This shows~\ref{item:cell-complex-face}.

  To verify~\ref{item:cell-complex-intersection}, consider the following three types of intersections:
  \begin{enumerate}
    \item If $A_1\cap A_2 \neq \emptyset$, then since $f(A_1\cap A_2) \subset f(A_1) \subset L^{(p-1)}$, $A_1\cap A_2$ is a cell in~$\cK$.
    \item Suppose $A\cap \eta_B^{-1}(\tau) \ne \emptyset$.
    Since $f(A\cap B) \subset f(A)\subset L^{(p-1)}$, we have $\eta_B(A\cap B)\subset \partial\Delta^p$.
    Since the subdivision $L_\sigma'$ of $\Delta^p$ does not subdivide simplices in $\partial\Delta^p$, $\tau\cap \partial\Delta^p$ is a proper face of $\Delta^p$ and $A\cap \eta_B^{-1}(\tau) = (\eta_B|_{A\cap B})^{-1} (\tau\cap\partial\Delta^p)$ is a face $A_1$ of $A\cap B$.
    Since $f(A_1) \subset f(A) \subset L^{(p-1)}$, $A_1$ is a cell of~$\cK$.
    \item Suppose that $\eta_{B_1}^{-1}(\tau_1)\cap \eta_{B_2}^{-1}(\tau_2) \neq \emptyset$.

    If $f(B_1\cap B_2) \subset L^{(p-1)}$, then each of $\eta_{B_1}$ and $\eta_{B_2}$ sends $B_1\cap B_2$ to $\partial \Delta^p$.
    Since the subdivision $L_\sigma'$ of $\Delta^p$ does not subdivide simplices in $\partial\Delta^p$, it follows that $\eta_{B_1}^{-1}(\tau_1) \cap B_2 = \eta_{B_1}^{-1}(\tau_1')$ for some proper face $\tau_1'$ of $\Delta^p$, and thus $\eta_{B_1}^{-1}(\tau_1) \cap B_2$ is a face of $B_1\cap B_2$.
    Similarly, $\eta_{B_2}^{-1}(\tau_2) \cap B_1$ is a face of $B_1\cap B_2$.
    Thus $\eta_{B_1}^{-1}(\tau_1) \cap \eta_{B_2}^{-1}(\tau_2)$ is a face $A$ of the simplex $B_1 \cap B_2$.
    Since $f(A)\subset f(B_1 \cap B_2) \subset L^{(p-1)}$, $A$ is a cell of~$\cK$.

    If $f(B_1\cap B_2)$ is not contained in $L^{(p-1)}$, then $\eta_{B_1}(B_1\cap B_2) = \Delta^p = \eta_{B_2}(B_1\cap B_2)$.
    It follows that $\eta_{B_1} = \eta_{B_2}$ on $B_1\cap B_2$ and $\eta_{B_1\cap B_2}$ is the restriction of $\eta_{B_1}$ (and $\eta_{B_2}$) on $B_1\cap B_2$.
    Since $\eta_{B_1}^{-1}(\tau_1) \cap \eta_{B_2}^{-1}(\tau_2) \subset B_1\cap B_2$, it follows that $\eta_{B_1}^{-1}(\tau_1) \cap \eta_{B_2}^{-1}(\tau_2) = \eta_{B_1\cap B_2}^{-1}(\tau_1\cap \tau_2)$.
    This is a cell of~$\cK$.
  \end{enumerate}
  This shows~\ref{item:cell-complex-intersection}.

  Now, apply the ``pulling triangulation'' \cite[Lemma~1.4]{Hudson:1969-1} (see also \cite[Lemma~2.9]{Rourke-Sanderson:1972-1}) to subdivide $\cK$ into a simplicial complex~$K'$ without introducing new vertices.
  Since we need to use this triangulation again later, we state it as a lemma below.

  \begin{lemma}[{Pulling triangulation~\cite[Lemma~1.4]{Hudson:1969-1}}]
    \label{lemma:pulling-triangulation}
    A cell complex $\cK$ can be subdivided into a simplicial complex without adding extra vertices.
  \end{lemma}

  \begin{proof}
    Choose an order of vertices of~$\cK$.
    For each cell $E$ of $\cK$, write $E$ as a join $E=v_0\cdot F$ where $v_0$ is the first vertex of $E$ and let $F$ be the union of all faces of $E$ not containing~$v_0$, and triangulate $E$ by taking the join of $v_0$ and the triangulation of $F$ given by the induction hypothesis.
    Note that if $B$ is a face of $E$ containing $v_0$, then $v_0$ is the first vertex of~$B$, so that the join triangulation of $E=v_0\cdot F$ agrees with the inductively given triangulation on~$B$.
    Thus the triangulation is well-defined.
  \end{proof}

  Returning to the proof of Lemma~\ref{lemma:simplicial-cellular-subdivision}, note that $f$ sends each cell $E$ of $\cK$ linearly to a simplex of $L'$, i.e., there is a simplex $\sigma'$ of $L'$ and a linear map $\eta_E\colon E\to \Delta^q$ ($q=\dim\sigma'$), sending vertices to vertices, such that $f|_E = \phi_{\sigma'} \circ \eta_E$.
  Since the subdivision $K'$ of $\cK$ does not introduce any extra vertex, $\eta_E$ sends a simplex of $K'$ contained in $E$ onto a face of $\Delta^q$ linearly.
  It follows that $f\colon K' \to L'$ is simplicial-cellular.

  To complete the proof, estimate the number of simplices in~$K'$ as follows.
  Recall that a simplex $B$ of $K$ is subdivided in $K'$ only if $f(B)=\sigma$ is a $p$-simplex of~$L$.
  For each simplex $\tau$ of the subdivision $L_\sigma'$ of $\Delta^p$, the cell $E=\eta_B^{-1}(\tau)$ is defined by at most $k + p + 2$ linear inequalities, since $B$ and $\tau$ are defined by at most $k+1$ and $p+1$ inequalities respectively.
  Since a vertex of $E$ is determined by a subcollection of those linear inequalities (by solving the system of associated linear equalities), it follows that $E$ has at most $2^{k+p+2}$ vertices.
  Since no extra vertices are added in the subdivision of $E$, a simplex in the subdivision is determined by choosing a subset of the set of vertices of~$E$.
  So the subdivision of $E$ has at most $2^{2^{k+p+2}}$ simplices.
  Since there are at most $d$ simplices $\tau$ in the subdivision $L_\sigma'$ of $\Delta^p$, the simplex $B$ of $K$ is subdivided into at most $C = d\cdot 2^{2^{k+p+2}}$ simplices in~$K'$.
  Since $K$ has $n$ simplices, it follows that $K'$ has at most $C \cdot n$ simplices.
\end{proof}

\subsection{Submanifolds from simplicial-cellular transversality}
\label{subsection:simplicial-cellular-transversality-submanifold}

Recall that the barycenter of a $p$-simplex $\sigma$ in a simplicial-cell complex $P$ is defined to be $\hat\sigma = \phi_\sigma\bigl(\frac1{p+1} (e_0 + \cdots + e_p)\bigr) \in \inte\sigma\subset P$, where $\phi_\sigma\colon \Delta^p \to P$ is the characteristic map and $e_0,\ldots,e_p$ are the vertices of~$\Delta^p$.

\begin{lemma}
  \label{lemma:basic-facts-barycenter-inverse}
  Let $X$ be a simplicial complex and $P$ be a simplicial-cell complex.
  Let $f\colon X\to P$ be a simplicial-cellular map.
  Let $A$ and $\sigma$ be simplices of $X$ and~$P$, respectively.
  Let $Y_\sigma = f^{-1}(\hat\sigma) \subset X$.
  Then the following hold.
  \begin{enumerate}
    \item $Y_\sigma \cap \inte A \ne \emptyset$ if and only if $f(A)=\sigma$.
    \item If $\sigma$ is top-dimensional, i.e.,\ $\dim\sigma = p$, then $Y_\sigma \cap A \ne \emptyset$ if and only if $f(A)=\sigma$.
    Consequently, by (1), $Y_\sigma \cap \inte A \ne \emptyset$ if and only if $Y_\sigma \cap A \ne \emptyset$.
    \item If $f(A)=\sigma$, then $Y_\sigma \cap A$ is a $(k-p)$-disk which is unknotted (in fact linearly embedded) in~$A$.
  \end{enumerate}
\end{lemma}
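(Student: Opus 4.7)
The plan is to reduce every part of the lemma to a single structural observation about the restriction of $f$ to one simplex. Since $f$ is simplicial-cellular, for any simplex $A$ of $X$ we have $f|_A = \phi_{f(A)} \circ \eta_A$, where $\eta_A\colon A \to \Delta^q$ is a linear surjection sending vertices to vertices and $q = \dim f(A)$. Because $\hat\sigma$ lies in the open cell $\inte\sigma$ and $\phi_\sigma$ is a CW characteristic map, $\phi_\sigma^{-1}(\hat\sigma)$ consists of the single point $\hat b \in \inte\Delta^p$, the barycenter of $\Delta^p$. So the study of $Y_\sigma \cap A$ reduces to the study of $\eta_A^{-1}(\hat b)$.

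For part (1), I would first prove the ``only if'' direction: any $x \in \inte A$ has all barycentric coordinates positive, and since every vertex of $\Delta^q$ is hit by $\eta_A$ at the vertex level, this forces $\eta_A(x) \in \inte\Delta^q$ and hence $f(x) \in \inte f(A)$; the equation $f(x) = \hat\sigma \in \inte\sigma$ then yields $f(A) = \sigma$ by disjointness of open cells of the CW complex $P$. For the converse, let $S_j \subseteq \{v_0,\ldots,v_k\}$ be the preimage under $\eta_A$ of the vertex $e_j \in \Delta^p$; each $S_j$ is nonempty by surjectivity, and the explicit interior point with barycentric coordinate $\tfrac{1}{(p+1)|S_j|}$ on each $v \in S_j$ maps to $\hat b$, exhibiting a point of $Y_\sigma \cap \inte A$.

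For part (2), pick $x \in Y_\sigma \cap A$ and let $A'$ be the unique face of $A$ whose open interior contains $x$; by part~(1) applied to $A'$, $f(A') = \sigma$, hence $\sigma \subseteq f(A)$ as subsets of $P$. Since $\dim f(A) \le \dim P = p$ and the open $p$-cell $\inte\sigma$ is contained in the closure of the open cell $\inte f(A)$, disjointness of open cells in $P$ forces $f(A) = \sigma$ as simplices. For part (3), expressing $\eta_A(x) = \hat b$ in barycentric coordinates $x = \sum_v t_v v$ on $A$ yields the system $\sum_{v \in S_j} t_v = \tfrac{1}{p+1}$ for $j = 0,\ldots,p$, subject to $t_v \ge 0$. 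These $p+1$ equations have pairwise disjoint supports $S_0,\ldots,S_p$, so they are linearly independent, and summing them recovers the simplex constraint $\sum_v t_v = 1$ automatically. Hence the solution set is a nonempty bounded convex polytope in $A$ of affine dimension $\dim A - p = k - p$, and so is a linearly embedded (hence unknotted) PL disk of that dimension.

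The main subtlety I anticipate is in part~(2): passing from ``$\sigma \subseteq f(A)$ as subsets of $P$'' to ``$f(A) = \sigma$ as simplices.'' In a simplicial-cell complex, distinct simplices can share boundary in nonstandard ways under their attaching maps, so the conclusion cannot rest on any simplicial face relation but only on the CW-theoretic fact that the open cells $\inte\sigma$ and $\inte f(A)$ are either equal or disjoint. Once this point is handled carefully, the other parts are essentially direct computations with the linear factorization $f|_A = \phi_{f(A)} \circ \eta_A$.
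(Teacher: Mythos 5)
Your proof is correct and follows essentially the same route as the paper: part (1) via disjointness of open cells, part (2) by observing that a top‑dimensional cell containing $\sigma$ must equal $\sigma$, and part (3) by solving the linear system $\eta_A = \hat b$ in coordinates on $A$. The only cosmetic difference is in part (3), where you work in barycentric coordinates and invoke convexity to get the disk, while the paper uses join coordinates and identifies $Y_\sigma \cap A$ explicitly with the product $A_0 \times \cdots \times A_p$ of the vertex‑preimage faces — a slightly more concrete description that it reuses later for the cell‑counting in Theorem 2.1(2).
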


\begin{proof}\leavevmode
  \Nopagebreak
  \begin{enumerate}
    \item Recall that $f$ sends $A$ to a simplex $\tau$ of $P$ and $f(\inte A) = \inte\tau$.
    Since distinct simplices of $P$ have disjoint interiors and $\hat\sigma$ lies in $\inte \sigma$, the conclusion follows.

    \item If $\hat\sigma\in f(A)$, then since $A=\bigcup \{\inte A_1 \mid A_1$ is a face of $A\}$, $\hat\sigma\in f(\inte A_1)$ for some face $A_1$ of~$A$.
    So, $f(A_1) = \sigma$ by~(1).
    Also, $f(A_1) = \sigma$ is a face of the simplex~$f(A)$ since $A_1$ is a face of~$A$.
    It follows that $f(A)=\sigma$ since $\sigma$ is top-dimensional.

    \item We use join coordinates to describe $\eta_A\colon A\to \Delta^p$ and $Y_\sigma\cap A$ as follows.
    Write $\Delta^p=[e_0,\ldots,e_p]$.
    Let $A_j=\eta_A^{-1}(e_j)$ be the face of $A$ spanned by the vertices that $\eta_A$ sends to~$e_j$.
    Then $A = A_0 \cdots A_p$, the join of $A_0,\ldots,A_p$.
    An element $x\in A$ is written in the form $x = \sum_j \lambda_j x_j$ where $x_j\in A_j$, $\sum_j \lambda_j = 1$, $\lambda_j\ge 0$, and $\eta_A\bigl( \sum_j \lambda_j x_j \bigr) = \sum_j \lambda_j e_j$ on~$A$.
    Therefore
    \[
      Y_\sigma \cap A = \eta_A^{-1} \biggl(\frac1{p+1} \sum_j e_j\biggr) = \frac1{p+1} \sum_j \eta_A^{-1}(e_j) = \sum_j \frac1{p+1} A_j
    \]
    in $A = A_0\cdots A_p$, and thus $Y_\sigma \cap A \cong A_0\times\cdots\times A_p \cong D^{k-p}$.
    Also, from this description of $Y_\sigma \cap A \subset A$, it follows that the convex linear cell $A_0\times\cdots\times A_p$ embeds linearly onto $Y_\sigma \cap A$, and thus $Y_\sigma \cap A$ is unknotted.
    \qedhere
  \end{enumerate}
\end{proof}

Now we begin the proof of Theorem~\ref{theorem:simplicial-cellular-transversality-submanifold}.
Let $f\colon X\to P$ be a simplicial-cellular map where $X$ is a simplicial complex with $\dim X = n$ and $P$ is a simplicial-cell complex with $\dim P = p$.
Note that if $p>n$, then $Y_\sigma = f^{-1}(\hat\sigma)= \emptyset$ and so Theorem~\ref{theorem:simplicial-cellular-transversality-submanifold} holds vacuously.
Thus we may assume that $p\le n$.

\begin{proof}
  [Proof of Theorem~\ref{theorem:simplicial-cellular-transversality-submanifold}~\ref{item:simplicial-cellular-transversality}]

  Let $\sigma$ be a $p$-simplex in $P$, $p=\dim P$.
  Let $D\subset \inte\sigma$ be a $p$-disk neighborhood of the barycenter~$\hat\sigma$.
  Our goal is to show the following: $Y_\sigma=f^{-1}(\hat\sigma)$ is a subpolyhedron in $X$ which has a regular neighborhood $N$ such that $(N,Y_\sigma)$ is PL homeomorphic to $Y_\sigma \times (D,\hat\sigma)$ and the restriction $f|_N\colon N\to P$ is equal to the projection $Y_\sigma \times D \to D \subset P$.
  (When $D$ is the $p$-simplex $\sigma_0 \subset \inte\sigma$, one obtains the statement of Theorem~\ref{theorem:simplicial-cellular-transversality-submanifold}.)

  Let $A$ be a $k$-simplex of~$X$.
  For brevity, denote $Y_\sigma \cap A$ by~$E_A$.
  Note that $E_A\neq \emptyset$ if and only if $f(A)=\sigma$ by Lemma~\ref{lemma:basic-facts-barycenter-inverse}.
  When $f(A)=\sigma$, let $\eta_A\colon A\to \Delta^p=[e_0,\ldots,e_p]$ be the linear surjection such that $f=\phi_\sigma \circ \eta_A$ on $A$ and let $A_j=\eta_A^{-1}(e_j)$ as before, so that $\eta_A$ sends an element $x = \sum_j \lambda_j x_j \in A$ to $\eta_A(x) = \sum_j \lambda_j e_j$.
  Let
  \[
    N_A = (\phi_\sigma\circ\eta_A)^{-1}(D) = \biggl\{ \sum_j \lambda_j x_j \in A \;\bigg|\; x_j\in A_j, \, \phi_\sigma\biggl(\sum_j \lambda_j e_j\biggr) \in D\subset \inte\sigma \biggr\},
  \]
  a neighborhood of $E_A$ in~$A$.
  Define a map $h_A\colon E_A \times D \to N_A$ by
  \[
    h_A \Biggl( \sum_j \frac1{p+1} x_j ,\, \phi_\sigma\bigg(\sum_j \lambda_j e_j \biggr) \Biggr) = \sum_j \lambda_j x_j.
  \]
  It is readily verified that $h_A$ is a well-defined PL homeomorphism, using that $\lambda_j\ne 0$ if $\phi_\sigma\bigl(\sum_j \lambda_j e_j \bigr) \in D\subset \inte\sigma$.
  In addition, for each face $A'$ of $A$ that meets~$Y_\sigma$,
  $h_{A'}$ is the restriction of~$h_A$ on $E_{A'}\times D \subset E_A\times D$.
  It follows that the cells $\{E_A\}$ define a cell complex structure of~$Y_\sigma$ and the local product structure $\{N_A \cong E_A\times D\}$ gives rise to a well-defined trivial normal $p$-block bundle structure on~$N = \bigcup N_A$ over~$Y_\sigma$, so that $(N,Y_\sigma) \cong Y_\sigma\times(D,\hat\sigma) \cong Y_\sigma\times(D^p,0)$.
\end{proof}

\begin{proof}[Proof of Theorem~\ref{theorem:simplicial-cellular-transversality-submanifold}~\ref{item:transversality-linear-complexity}]
  Triangulate the disjoint union $\bigcup_\sigma Y_\sigma$ by subdividing the cell complex structure $\{E_A\}$ without adding extra vertices, by using the pulling triangulation in Lemma~\ref{lemma:pulling-triangulation}.

  The cell complex $\{E_A\}$ has at most $\Delta(X)$ cells when $X$ has $\Delta(X)$ simplices.
  Recall from the proof of Lemma~\ref{lemma:basic-facts-barycenter-inverse} that $E_A$ is (empty or) linearly isomorphic to the product of $p+1$ simplices $A_0,\ldots,A_p$ with $\dim A_j \le \dim A \le n$.
  So the number of vertices of the cell $E_A$ is at most $(n+1)^{p+1} \le (n+1)^{n+1}$.
  When the cell $E_A$ has $s$ vertices, the pulling triangulation of $E_A$ has at most $2^s$ simplices.
  Thus, the triangulation of $\bigcup_\sigma Y_\sigma$ has at most $C \cdot \Delta(X)$ simplices, where $C = 2^{((n+1)^{(n+1)})}$ is a constant depending only on~$n$.
\end{proof}

\begin{proof}[Proof of Theorem~\ref{theorem:simplicial-cellular-transversality-submanifold}~\ref{item:transversality-submanifold}]

  Suppose $X$ is a PL $n$-manifold equipped with a combinatorial triangulation.
  Our goal is to show that $Y_\sigma$ is a properly embedded PL $(n-p)$-submanifold in~$X$.
  Let $A$ be a $k$-simplex in~$X$ such that $E_A \ne \emptyset$.

  \begin{assertion}
    In $\st(A)$, $Y_\sigma \cap \st(A)$ is isotopic to the join $E_A \cdot \lk(A)$, rel $E_A$.
  \end{assertion}

  See Figure~\ref{figure:inverse-image-local} for an illustrated example of the assertion.

  \begin{figure}[ht]
    \begin{tikzpicture}[
      x=1cm,y=1cm,scale=1.3,>=stealth,
      md/.style={line width=1pt,line cap=butt},
      mdover/.style={line width=2pt,color=white,double=black,
        double distance=1pt,line cap=butt},
      thin/.style={md,line width=.5pt},
      thinover/.style={mdover,line width=1.5pt,double distance=.5pt},
      md,
      ]
      \def\c#1{coordinate(#1) node[text=blue]{\tiny#1}} 
      \def\c#1{coordinate(#1)} 
      \small
  
      \def\r{2 and 1.1}
      \def\drawstar{
        \fill (0,0) \c{O}
        (0,-2) \c{S} circle(1.2pt) (0,2) \c{N} circle(1.2pt)
        (15:\r) \c{A} circle(1.2pt) (100:\r) \c{B} circle(1.2pt)
        (165:\r) \c{C} circle(1.2pt) (205:\r) \c{D} circle(1.2pt)
        (-35:\r) \c{E} circle(1.2pt);
    
        \draw (N)--(S)
        (S)--(A) (S)--(B) (S)--(C) (S)--(D) (S)--(E)
        (A)--(B)--(C)--(D)--(E)--cycle
        (N)--(A) (N)--(B) (N)--(C) (N)--(D) (N)--(E);
        \draw[mdover] ($(N)!0.2!(S)$) -- ($(N)!0.8!(S)$)
        ($(N)!0.2!(D)$) -- ($(N)!0.95!(D)$)
        ($(N)!0.2!(E)$) -- ($(N)!0.95!(E)$);
        \draw[mdover] ($(D)!0.05!(E)$) -- ($(D)!0.95!(E)$);
      }
      \drawstar
      \draw (O) node[right]{$E_A$}
      (S) node[below]{$v_0$} (N) node[above]{$v_1$}
      (A) node[right]{$v_2$} (B) node[shift={(-0.2,-0.35)}]{$v_3$}
      (C) node[left]{$v_4$} (D) node[left]{$v_5$} (E) node[right]{$v_6$};

      \begin{scope}[thin,fill=blue,color=blue,fill opacity=0.2]
        \filldraw (O) -- ($(N)!0.5!(A)$) -- ($(N)!0.5!(B)$) -- cycle;
        \filldraw (O) -- ($(N)!0.5!(B)$) -- ($(B)!0.5!(C)$) -- ($(C)!0.5!(S)$) -- cycle;
        \filldraw (O) -- ($(C)!0.5!(S)$) -- ($(D)!0.5!(S)$) -- cycle;
        \filldraw (O) -- ($(D)!0.5!(S)$) -- ($(E)!0.5!(S)$) -- cycle;
        \filldraw (O) -- ($(E)!0.5!(S)$) -- ($(E)!0.5!(A)$) -- ($(N)!0.5!(A)$) -- cycle;
      \end{scope}
      
      \fill[color=blue] (O) circle(1.2pt) ($(N)!0.5!(A)$) circle(1.2pt)
      ($(N)!0.5!(B)$) circle(1.2pt) ($(B)!0.5!(C)$) circle(1.2pt) ($(C)!0.5!(S)$) circle(1.2pt) 
      ($(D)!0.5!(S)$) circle(1.2pt) ($(E)!0.5!(S)$) circle(1.2pt) ($(A)!0.5!(E)$) circle(1.2pt);

      \draw (-3,0) node[scale=1.2]{$\st(A) = $};
      \draw[->] (2.5,0) -- (4.5,0) node[midway,above]{$f$}
      node[midway,below]{
        $\begin{aligned}
          v_0, v_2, v_3 & \mapsto e_0 \\[-.75ex]
          v_1, v_4, v_5, v_6 & \mapsto e_1
        \end{aligned}$
      };
  
      \begin{scope}[shift={(5,0)}]
        \filldraw (0,-1) circle(1pt) node[right]{$e_0$} -- (0,1) circle(1pt) node[right]{$e_1$};
        \fill[color=blue] circle(1.2pt) node[right]{$\hat\sigma$};
      \end{scope}

      \draw (2.5/2,-3.5/2) node[scale=2,rotate=-45]{$\approx$};
      \begin{scope}[shift={(2.5,-3.5)}]
        \drawstar
        \begin{scope}[thin,color=red,fill opacity=0.2]
          \fill (O) -- (A) -- (B) -- cycle;
          \fill (O) -- (B) -- ($(B)!0.5!(C)$) -- (C) -- cycle;
          \fill (O) -- (C) -- (D) -- cycle;
          \fill (O) -- (D) -- (E) -- cycle;
          \fill (O) -- (E) -- ($(E)!0.5!(A)$) -- (A) -- cycle;
          \foreach \v in {A,B,C,D,E} { \draw (O)--(\v); }
        \end{scope}
        \fill[color=red] (O) circle(1.2pt)
        (A) circle(1.2pt) (B) circle(1.2pt) (C) circle(1.2pt) (D) circle(1.2pt) (E) circle(1.2pt) 
        ($(B)!0.5!(C)$) circle(1.2pt) ($(A)!0.5!(E)$) circle(1.2pt);
      \end{scope}
    \end{tikzpicture}

    \caption{
      An illustration of the assertion.
      Here $n=3$, $p=1$, $\sigma=[e_0,e_1]$, $A=[v_0,v_1]$, and $f$ is a simplicial map sending vertices as indicated.
      The inverse image $Y_\sigma \cap \st(A) = f^{-1}(\hat\sigma) \cap \st(A)$ is the blue-shaded 2-complex, and $E_A = Y_\sigma \cap A$ is a single point in this example.
      In $\st(A)$, $Y_\sigma \cap \st(A)$ is isotopic rel $E_A$ to the join $E_A\cdot \lk(A)$, which is the red-shaded 2-complex.
    }
    \label{figure:inverse-image-local}
  \end{figure}

  Since $X$ is an $n$-manifold, $\lk(A)$ is an $(n-k-1)$-disk if $A\subset \partial X$, an $(n-k-1)$-sphere otherwise.
  Note that $(\st(A), E_A\cdot lk(A)) = (A, E_A) \cdot \lk(A)$ and $(A,E_A)\cong (D^k, D^{k-p})$ is an unknotted disk pair by Lemma~\ref{lemma:basic-facts-barycenter-inverse}.
  So, from the above assertion, it follows that $(\st(A), Y_\sigma \cap \st(A)) \cong (\st(A), E_A\cdot\lk(A))\cong (D^n, D^{n-p})$ is an unknotted disk pair.
  Thus $Y_\sigma$ is a locally flat $(n-p)$-submanifold in~$X$.
  Also, note that $E_A = Y_\sigma \cap A$ lies in the boundary of $Y_\sigma \cap \st(A)\cong E_A\cdot\lk(A) \cong  D^{n-p}$ if and only if $\lk(A)$ is an $(n-k-1)$-disk, i.e., exactly when $E_A \subset A\subset \partial X$.
  Therefore $Y_\sigma$ is properly embedded in~$X$.
  This proves the conclusion.

  The assertion follows from the following statement, which we will prove by induction on~$r$: for each $r$-simplex $B\subset \lk(A)$, there is an isotopy $h_B\colon E_{A\cdot B} \approx E_A\cdot B$ from the disk $E_{A\cdot B}$ to $E_A\cdot B$ (through properly embedded disks) rel $A$ in $A\cdot B$ such that for each face $B'$ of~$B$, $h_B$ restricts to the isotopy~$h_{B'}\colon E_{A\cdot B'} \approx E_A \cdot B'$ rel $A$ in~$A\cdot B$.

  By Lemma~\ref{lemma:basic-facts-barycenter-inverse}, $E_{A\cdot B}$ is an unknotted disk in the simplex $A\cdot B\subset X$.
  Also, since $E_A$ is unknotted in~$A$, the disk $E_{A} \cdot B$ is unknotted in $A\cdot B$.
  Since $E_{A\cdot B} \cap A = E_A = (E_{A} \cdot B) \cap A$, the unknotted disks $E_{A\cdot B}$ and $E_A \cdot B$ are isotopic rel~$A$ in $A\cdot B$.
  If $r=0$, then since $B$ has no proper face, we can take this isotopy as the desired~$h_B$.

  If $r\ge 1$, then by the induction hypothesis for $r-1$, we already have isotopies $h_{B'}$ in $A\cdot B'$ for proper faces $B'$ of $B$, whose union is an isotopy of $Y_\sigma\cap (A\cdot \partial B)$ to $E_A \cdot \partial B$ rel $A$ in the ball $A\cdot \partial B$.
  Denote this isotopy by~$h_{\partial B}\colon Y_\sigma\cap (A\cdot \partial B) \approx E_A \cdot \partial B$.
  We will define a desired isotopy $h_A \colon E_{A\cdot B} \approx E_A \cdot B$ rel $A$ in $A\cdot B$ that extends~$h_{\partial B}$ as follows.
  Note that $(A\cdot B, A\cdot \partial B, A) \cong (D^{k+r}\times I, D^{k+r}\times 0, D^k\times 0)$ is a standard triple.
  The isotopy $h_{\partial B}$ in $A\cdot \partial B$ is covered by an ambient isotopy $\{\phi_t \colon A\cdot B \to A\cdot B\}_{0 \le t \le 1}$ rel~$A$, $\phi_0=\id$.
  We have that $D = \phi_1(E_{A\cdot B})$ is an unknotted disk which intersects $A\cdot \partial B$ at $E_A \cdot \partial B$.
  Since $E_A \cdot B$ is another unknotted disk in $A\cdot B$ having the same intersection with $A\cdot \partial B$, there is an ambient isotopy $\{\psi_t \colon A\cdot B \to A\cdot B\}_{0 \le t \le 1}$ rel $A\cdot \partial B$ such that $\psi_0=\id$, $\psi_1(D)=E_A\cdot B$.
  The composition $\{\psi_t \circ \phi_t\}$ induces a desired isotopy $h_A\colon E_{A\cdot B} \approx \psi_1(\phi_1(E_{A\cdot B}))=E_A\cdot B$ rel $A$, which extends~$h_{\partial B}$.
\end{proof}

\subsection{Cobordism from simplicial-cellular transversality}
\label{subsection:simplicial-cellular-transversality-cobordism}

\begin{proof}
  [Proof of Theorem~\ref{theorem:simplicial-cellular-transversality-cobordism}~\ref{item:inverse-image-cobordism} and~\ref{item:inverse-image-oriented-cobordism}]

  Let $X$ be a triangulated PL $n$-manifold with $\partial X = \emptyset$ and $f\colon X\to P$ be a simplicial-cellular map to a simplicial-cell complex~$P$ with $p=\dim P$.
  Our goal is to construct, for each $(p-1)$-simplex $\tau$ of $P$, a framed $(n-p+1)$-submanifold $Z_\tau$ bounded by certain parallel copies of the framed submanifolds $Y_\sigma$ in~$X$.

  Similarly to the proof of Theorem~\ref{theorem:simplicial-cellular-transversality-submanifold}, we may assume that $p\le n$, since every $Y_\sigma$ is empty otherwise.

  We begin by subdividing $X$ and $P$ in such a way that the exteriors of $Y_\sigma\subset X$ and $\hat\sigma\in \sigma\subset P$ are subcomplexes, as detailed below.

  Write the standard $p$-simplex as $\Delta^p=[e_0,\ldots,e_p]$.
  Let $b$ be the barycenter of~$\Delta^p$.
  Let $v_j = \frac12(b + e_j)$ so that the $p$-simplex $[v_0,\ldots,v_p]$ is a regular neighborhood of~$b$.
  See Figure~\ref{figure:top-dim-subdivision}.
  The exterior $\Delta^p \sm \inte[v_0,\ldots,v_p]$ is a cell complex with $p+1$ $p$-cells, which are spanned by $\{e_0,\ldots,e_p,v_0,\ldots,v_p\} \sm \{e_j,v_j\}$, $j=0,\ldots,p$.
  Triangulate $\Delta^p \sm \inte[v_0,\ldots,v_p]$ without adding extra vertices, using the pulling triangulation in Lemma~\ref{lemma:pulling-triangulation}.
  Adjoining $[v_0,\ldots,v_p]$, we obtain a simplicial subdivision of~$\Delta^p$.
  Subdivide each $p$-simplex $\sigma$ of $P$ using this subdivision of $\Delta^p$, to define a top-dimensional subdivision $P'$ of the simplicial-cell complex~$P$.
  Let $\sigma_0 = \phi_\sigma[v_0,\ldots,v_p]$.
  The $p$-simplex $\sigma_0$ is a neighborhood of the barycenter~$\hat\sigma$.
  By the construction, $P'$ restricts to a simplicial-cell complex structure on $P\sm \bigcup_\sigma \inte\sigma_0$.

  \begin{figure}[h]
    \begin{tikzpicture}[
      x=1cm,y=1cm,scale=2,>=stealth,
      md/.style={line width=1pt,line cap=butt},
      mdover/.style={line width=2pt,color=white,double=black,
        double distance=1pt,line cap=butt},
      thin/.style={md,line width=.5pt},
      thinover/.style={mdover,line width=1.5pt,double distance=.5pt},
      md,
    ]
    \def\c#1{coordinate(#1) node[text=blue]{\tiny#1}} 
    \def\c#1{coordinate(#1)} 
    \footnotesize
    \fill \c{b} circle(.7pt) node[above]{$b$};
    \foreach \i/\l in {0/below,1/below,2/left} {
      \fill (-150+\i*120:1) \c{e\i} circle(.7pt) node[\l]{$e_\i$}; }
    \foreach \i/\l in {0/below,1/below,2/left} {
      \fill (barycentric cs:e\i=1,b=1) \c{v\i} circle(.7pt) node[\l]{$v_\i$}; }
    \def\tet#1{ \draw (#10)--(#11)--(#12)--cycle; }
    \tet{e} \tet{v}
    \foreach \i in {0,1,2} { \draw[dash pattern=on 0.1pt off 3pt,line cap=round] (v\i)--(e\i); }
    \end{tikzpicture}
    \caption{A top-dimensional subdivision of~$\Delta^p$ for $p=2$.}
    \label{figure:top-dim-subdivision}
  \end{figure}

  Choose a subdivision $X'$ of the simplicial complex $X$ such that $f\colon X' \to P'$ is simplicial, by applying Lemma~\ref{lemma:simplicial-cellular-subdivision}.
  By Theorem~\ref{theorem:simplicial-cellular-transversality-submanifold}, $\bigcup_\sigma f^{-1}(\sigma_0)$ is a regular neighborhood of the submanifold $\bigcup_\sigma Y_\sigma = \bigcup_\sigma f^{-1}(\hat\sigma)$ in~$X$.
  So the triangulation $X'$ of $X$ induces a triangulation of the exterior $E = X\sm \bigcup_\sigma f^{-1}(\inte \sigma_0)$ of the submanifold $\bigcup_\sigma Y_\sigma$ in~$X$.

  Define a simplicial map $r_0\colon \Delta^p \sm \inte[v_0,\ldots,v_p] \to \partial\Delta^p$ by
  $v_j\mapsto e_j$ and $e_j \mapsto e_j$.
  It is a retraction onto $\partial\Delta^p$, called a ``pseudo-radial'' projection.
  Apply $r_0$ to $\sigma\sm \inte\sigma_0$ for each $p$-simplex $\sigma$ of $P$ to define a simplicial-cellular retraction $r\colon P\sm \bigcup_\sigma \inte \sigma_0 \to P^{(p-1)}$, where $P^{(p-1)}$ is the $(p-1)$-skeleton of~$P$.

  Since $f$ sends $E$ to $P\sm \bigcup_\sigma \inte \sigma_0$, the composition $g = r\circ (f|_E) \colon E\to P^{(p-1)}$ is defined.
  For each $(p-1)$-simplex $\tau$ of $P$, let $Z_\tau = g^{-1}(\hat\tau) \subset E$.
  By Theorem~\ref{theorem:simplicial-cellular-transversality-submanifold}~\ref{item:transversality-submanifold}, $Z_\tau$ is a properly embedded $(n-p+1)$-submanifold.
  Note that
  \[
    \partial Z_\tau = Z_\tau \cap \partial E = f^{-1}r^{-1}(\hat\tau) \cap \biggl(\bigcup_\sigma f^{-1}(\partial\sigma_0)\biggr) = \bigcup_\sigma f^{-1}(r^{-1}(\hat\tau) \cap \partial \sigma_0).
  \]
  Let $d_i\sigma_0 = \phi_\sigma[v_0,\ldots,\hat v_i,\ldots,v_p] \subset \partial\sigma_0$ be the $i$th facet of~$\sigma_0$.
  Then $r^{-1}(\hat\tau) \cap \partial\sigma_0$ is the set of barycenters $\widehat{d_i\sigma_0}$ of facets $d_i\sigma_0$ that $r$ sends to~$\hat\tau$.
  It follows that $\partial Z_\tau$ is the disjoint union of submanifolds of the form $f^{-1}(\widehat{d_i\sigma_0})$.
  Each $f^{-1}(\widehat{d_i\sigma_0})$ is a parallel copy of $Y_\sigma=f^{-1}(\hat\sigma)$ taken along the framing of $Y_\sigma$, by Theorem~\ref{theorem:simplicial-cellular-transversality-submanifold}~\ref{item:simplicial-cellular-transversality}.
  Since $\sigma_0$ has $p+1$ facets $d_i\sigma_0$, $i=0,\ldots,p$, $\partial Z_\tau$ contains at most $p+1$ parallel copies of~$Y_\sigma$.
  Thus $\partial Z_\tau = \bigcup_\sigma k_{\tau\sigma} Y_\sigma$ with $0\le k_{\tau\sigma} \le p+1$.
  (Here $k_{\tau\sigma}$ may be greater than one since multiple facets of $\sigma$ may be identified with~$\tau$ in the simplicial-cell complex~$P$.)

  Suppose that $X$ is oriented.
  As aforementioned, $Y_\sigma$ is oriented so that the product orientation on the  regular neighborhood $Y_\sigma\times\sigma_0 \subset X$ agrees with the orientation of~$X$.
  Also, by Theorem~\ref{theorem:simplicial-cellular-transversality-submanifold}~\ref{item:simplicial-cellular-transversality}, $Z_\tau$ has a regular neighborhood $Z_\tau\times\tau_0$ in $E$ where $\tau_0$ is a $(p-1)$-simplex neighborhood of $\hat\tau$ in $\inte\tau$.
  Orient $Z_\tau$ so that the product orientation on $Z_\tau\times\tau_0$ agrees with the orientation of~$X$, and orient $d_i\sigma_0 \subset \partial\sigma_0$ by the boundary orientation on~$\partial\sigma_0$.
  Note that when $r$ sends $d_i\sigma_0$ to $\tau$, $d_i\sigma = \pm \tau$ in the chain complex $C_*(P)$, and the sign is $+$ if and only if $r\colon d_i\sigma_0 \to \tau$ is orientation preserving. 
  Thus the boundary component $f^{-1}(\widehat{d_i\sigma_0})$ of $Z_\tau$ is a copy of $Y_\sigma$ (resp.\ $-Y_\sigma$) as an oriented manifold if and only if $d_i\sigma = \tau$ (resp.\ $d_i\sigma = -\tau$) in $C_*(P)$.
  From this, it follows that $\partial Z_\tau = d_{\tau\sigma} Y_\sigma$ algebraically if we write $\partial\sigma = \sum_\tau d_{\tau\sigma}\tau$ in~$C_*(P)$.

  When $X$ is unoriented, the argument in the above paragraph shows a weaker conclusion that $k_{\tau\sigma} \equiv d_{\tau\sigma} \bmod 2$.
\end{proof}

\begin{proof}[Proof of Theorem~\ref{theorem:simplicial-cellular-transversality-cobordism}~\ref{item:cobordism-complexity}]
  Note that the above construction of $P'$ subdivides each $p$-simplex $\sigma$ using a fixed subdivision of~$\Delta^p$.
  Thus the number of simplices in the subdivision of $\sigma$ is a constant $N=N(p)$ which is determined by $p=\dim P \le n$.
  Therefore, if $X$ has $\Delta(X)$ simplices, then the subdivision $X'$ of $X$ used above has at most $C\cdot \Delta(X)$ simplices, where $C=C(n)$ is a constant depending only on~$n$, in light of Lemma~\ref{lemma:simplicial-cellular-subdivision}.
  The exterior $E$ of $\bigcup_\sigma Y_\sigma$ in $X$ is a subcomplex of $X'$, so it has at most $C\cdot \Delta(X)$ simplices too.
  Now, apply Theorem~\ref{theorem:simplicial-cellular-transversality-submanifold}~\ref{item:transversality-linear-complexity} to the map $g\colon E\to P^{(p-1)}$ used above, to obtain a triangulation of the disjoint union $\bigcup_\tau Z_\tau$ with at most $C' \cdot C\cdot \Delta(X)$ simplices, where $C'=C'(n)$ is a constant depending only on~$n$.
\end{proof}

For later use, we record some properties of the cell complex structures we used in the above proofs, as a lemma below.

\begin{lemma}
  \label{lemma:transversality-cell-structures}
  Suppose that $f\colon X\to P$ is as in Theorem~\ref{theorem:simplicial-cellular-transversality-cobordism}, with $\dim X = n$, $\dim P = p$.
  Then the following hold.

  \begin{enumerate}
    \item\label{item:cell-structure-Y_sigma}
    The $(n-p)$-manifold $\bigcup_\sigma Y_\sigma$ has a cell complex structure with at most $C(n) \cdot \Delta(X)$ cells such that each cell has at most $C(n)$ vertices.
    \item\label{item:cell-structure-Z_tau}
    The $(n-p+1)$-manifold $\bigcup_\tau Z_\tau$ has a cell complex structure with at most $C(n)\cdot\Delta(X)$ cells each of which has at most $C(n)$ vertices.
    Whenever $Y_\sigma$ is a boundary component of\/ $Z_\tau$, $Y_\sigma$ is a cell subcomplex.
    \item\label{item:cell-structure-X}
    The manifold $X$ has a cell complex structure with at most $C(n)\cdot\Delta(X)$ cells each of which has at most $C(n)$ vertices.
    The product cell complex structure of the regular neighborhood $\bigcup_\sigma Y_\sigma\times \sigma_0$ of\/ $\bigcup_\sigma Y_\sigma$ is a cell subcomplex of~$X$.
  \end{enumerate}
\end{lemma}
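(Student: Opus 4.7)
The plan is to read off the required cell complex structures from the geometric constructions already made in the proofs of Theorems~\ref{theorem:simplicial-cellular-transversality-submanifold} and~\ref{theorem:simplicial-cellular-transversality-cobordism}, and to bound the vertex and cell counts by direct combinatorics.

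For~\ref{item:cell-structure-Y_sigma}, I would take the cell structure $\{E_A\}_A$ on $\bigcup_\sigma Y_\sigma$ constructed in the proof of Theorem~\ref{theorem:simplicial-cellular-transversality-submanifold}~\ref{item:simplicial-cellular-transversality}, indexed by simplices $A$ of $X$ with $f(A)=\sigma$ top-dimensional. Each nonempty $E_A$ is linearly isomorphic to the product $A_0\times\cdots\times A_p$ of at most $p+1\le n+1$ faces of~$A$, so it has at most $(n+1)^{n+1}$ vertices; the total number of cells is bounded by $\Delta(X)$.

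For~\ref{item:cell-structure-Z_tau}, I would apply the same transversality construction to the simplicial-cellular map $g = r\circ(f|_E)\colon E\to P^{(p-1)}$ used in the proof of Theorem~\ref{theorem:simplicial-cellular-transversality-cobordism}, where $E$ is the exterior of $\bigcup_\sigma f^{-1}(\inte\sigma_0)$ triangulated as a subcomplex of~$X'$. Since $X'$ has at most $C(n)\cdot\Delta(X)$ simplices by Lemma~\ref{lemma:simplicial-cellular-subdivision}, the resulting cells $g^{-1}(\hat\tau)\cap B$ indexed by simplices $B$ of~$E$ give the desired bounds on $\bigcup_\tau Z_\tau$. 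The subcomplex assertion reduces to checking that on $\partial E = \bigcup_\sigma f^{-1}(\partial\sigma_0)$, the cells of $Z_\tau$ coincide with the cells $\{E_A\}$ of $Y_\sigma$ via the product PL homeomorphism $f^{-1}(\sigma_0)\cong Y_\sigma\times\sigma_0$; this holds because both transversality constructions factor through the same local product model near each simplex.

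For~\ref{item:cell-structure-X}, I would glue the product cell structure $\{E_A\times F\}$ on the regular neighborhood $\bigcup_\sigma Y_\sigma\times\sigma_0$ (with $F$ ranging over faces of $\sigma_0$) to the simplicial structure of $X'$ on the exterior~$E$, arranging that the two structures agree on the interface $\bigcup_\sigma Y_\sigma\times\partial\sigma_0$. The main obstacle is this interface compatibility: the triangulation $X'$ need not restrict there to the product of $\{E_A\}$ with the simplicial structure on $\partial\sigma_0$, so I would use the local product model from Theorem~\ref{theorem:simplicial-cellular-transversality-submanifold}~\ref{item:simplicial-cellular-transversality} to show that a suitable choice of $X'$, together with a pulling triangulation (Lemma~\ref{lemma:pulling-triangulation}) applied to the interface cells, makes them match with only a $C(n)$-bounded multiplicative blow-up in the number of cells. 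Each resulting cell is then a product of at most two simplices of dimension $\le n$, hence has at most $(n+1)^2$ vertices, and the total count stays within $C(n)\cdot\Delta(X)$.
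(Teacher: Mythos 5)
Items~\ref{item:cell-structure-Y_sigma} and~\ref{item:cell-structure-Z_tau} of your proposal agree with the paper's route: both read off the cell structures $\{E_A\}$ and $\{g^{-1}(\hat\tau)\cap B\}$ from the transversality proofs, with the same vertex/cell counts. For item~\ref{item:cell-structure-Z_tau}, your justification of the subcomplex assertion (``both transversality constructions factor through the same local product model'') is correct in spirit and matches the paper's implicit argument.

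Item~\ref{item:cell-structure-X} has a genuine gap. You try to glue the product cell structure on $\bigcup_\sigma Y_\sigma\times\sigma_0$ to the \emph{simplicial} structure $X'$ on the exterior~$E$, and you propose to fix the interface mismatch by a pulling triangulation of the interface cells. But that repair cannot deliver the stated conclusion: once you triangulate the interface $\bigcup_\sigma Y_\sigma\times\partial\sigma_0$, the product cells $E_A\times F$ meeting the interface are refined there, so the product cell complex structure is no longer a literal subcomplex of the resulting structure on~$X$ --- it is only a subdivision of one. What the paper uses instead is the intermediate cell complex $\cK$ from the proof of Lemma~\ref{lemma:simplicial-cellular-subdivision} (the one from which $X'$ is then obtained by pulling triangulation). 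Inside $f^{-1}(\sigma_0)$ the cells of $\cK$ are $\eta_B^{-1}(\tau)$ for $\tau$ a face of $[v_0,\ldots,v_p]$, and under the local product homeomorphism $h_B$ these are precisely $E_B\times\phi_\sigma(\tau)$, i.e.\ exactly the cells of the product structure on $\bigcup_\sigma Y_\sigma\times\sigma_0$. So $\cK$ already contains the product neighborhood as an honest subcomplex, with no gluing or interface matching needed; its cell and vertex counts are bounded by the same linear-inequality argument used in Lemma~\ref{lemma:simplicial-cellular-subdivision}. You should use $\cK$ itself, not a triangulation of it, for item~\ref{item:cell-structure-X}.
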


In Lemma~\ref{lemma:transversality-cell-structures}~\ref{item:cell-structure-X}, the product cell complex structure means $\{$cells of $Y_\sigma\}\times\{$simplices of $\sigma_0\}$, viewing $\sigma_0\cong \Delta^p$ as the standard simplicial complex.

Lemma~\ref{lemma:transversality-cell-structures}~\ref{item:cell-structure-Y_sigma} follows from the proof of Theorem~\ref{theorem:simplicial-cellular-transversality-submanifold}.
Lemma~\ref{lemma:transversality-cell-structures}~\ref{item:cell-structure-Z_tau} and~\ref{item:cell-structure-X} follows from the proof of Theorem~\ref{theorem:simplicial-cellular-transversality-cobordism}.
(Note that when the proof of Theorem~\ref{theorem:simplicial-cellular-transversality-cobordism} uses Lemma~\ref{lemma:simplicial-cellular-subdivision} to obtain the simplicial subdivision $X'$ of $X$, it actually gives a cell complex structure of $X$ with the desired properties, from which $X'$ is obtained by the pulling triangulation.)

\section{Baumslag-Dyer-Heller acyclic groups and quantitative chain homotopy}
\label{section:BDH-preliminaries}

In~\cite{Baumslag-Dyer-Heller:1980-1}, Baumslag, Dyer and Heller introduced a functor $\cA\colon \{\mathrm{groups}\} \to \{\mathrm{groups}\}$ which assigns to each group $G$ an acyclic group $\cA(G)$ endowed with a functorial injection $G\hookrightarrow \cA(G)$.
In this section, we discuss some preliminary facts on the Baumslag-Dyer-Heller acyclic groups.

\subsubsection*{A direct limit expression of $\cA(G)$}

The acyclic group $\cA(G)$ is given as a direct limit $\cA(G) = \varinjlim_{k} \bA^k(G)$.
Here, $\bA\colon \{\mathrm{groups}\} \to \{\mathrm{groups}\}$ is a functor defined by a ``mitosis'' construction described in~\cite[Section~5]{Baumslag-Dyer-Heller:1980-1} (see also~\cite[Definition~5.1]{Cha:2014-1}).
For each $G$, $\bA(G)$ is equipped with a functorial injection $G\hookrightarrow \bA(G)$.
The groups $\bA^k(G)=\bA(\bA^{k-1}(G))$ are obtained by iterated application, and
the induced injections $\bA^{k-1}(G) \hookrightarrow \bA(\bA^{k-1}(G)) = \bA^k(G)$
form a direct system
\[
  G \hookrightarrow \bA(G) \hookrightarrow \bA^2(G) \hookrightarrow \cdots.
\]
The injection $G\hookrightarrow \cA(G) = \varinjlim_{k} \bA^k(G)$ is the limit of the injections $G\hookrightarrow \bA^k(G)$.

\subsubsection*{Quantitative understanding of acyclicity}

Baumslag, Dyer and Heller proved that the injection $G\hookrightarrow \bA^k(G)$ induces zero homomorphisms $H_i(G;\bF) \to H_i(\bA^k(G);\bF)$ in the range $i\le n$ when $\bF$ is a field.
The acyclicity of $\cA(G)$ is a consequence of this.
In~\cite{Cha:2014-1}, a quantitative chain-level understanding of this homological behavior is given.
See also~\cite{Lim:2022-1} for a subsequent study.
We first recall notations and terms that are needed to state the result.

For a discrete group $G$, denote by $BG$ the simplicial classifying space, which is defined by the bar construction (or the nerve construction) for~$G$.
Denote by $\Z BG_*$ the Moore chain complex of the simplicial set~$BG$.
For each $k$, $\Z BG_k$ is the free abelian group generated by $k$-simplices of~$BG$, including degenerated ones.
The geometric realization of $BG$ has a natural CW-complex structure, whose cells are in 1-1 correspondence with non-degenerated simplices of~$BG$.
As an abuse of notation, we denote the geometric realization by the same symbol~$BG$.
Let $C_*(BG)$ be the cellular chain complex of~$BG$.
Readers not familiar with those in this paragraph may refer to a quick review in~\cite[Appendix~A]{Cha:2014-1}.

The chain complexes $\Z BG$ and $C_*(BG)$ are based, i.e.,\ equipped with the basis consisting of simplices and cells.
In a based chain complex, define the \emph{diameter} of a chain $u$ by $d(u) = \sum_\sigma |a_\sigma|$ if $u$ is written as a linear combination $u=\sum_\sigma a_\sigma \sigma$ of basis elements~$\sigma$ ($a_\sigma\in \Z$).

In this paper, we often use chain maps $\phi\colon C_* \to D_*$ and chain homotopies $P\colon C_*\to D_{*+1}$ defined in a given range of dimensions $*\le n$ only.
We will call them \emph{partial chain maps} and \emph{partial chain homotopies}.
When $C_*$ and $D_*$ are based chain complexes, define the \emph{norm of $\phi$ in dimension${}\le n$} to be $\|\phi\| = \sup_\sigma d(\phi(\sigma))$ where $\sigma$ varies over basis elements in~$C_*$ for $*\le n$.
Similarly, define $\|P\| = \sup_\sigma d(P(\sigma))$.

Now, we can state the aforementioned quantitative result in~\cite{Cha:2014-1}.

\begin{theorem}[{\cite[Theorem~5.2]{Cha:2014-1}}]
  \label{theorem:quantitative-chain-homotopy-moore}
  For each $n>0$, there is a partial chain null-homotopy $\Phi \colon \Z BG_* \to \Z B\bA^n(G)_{*+1}$ defined in dimension${}\le n=\dim M$ for the inclusion-induced chain map $i\colon \Z BG_* \hookrightarrow \Z B\bA^n(G)_*$, i.e., $\Phi \partial + \partial \Phi = i$ on $\Z BG_*$ for $* \le n$.
  In addition, the norm $\|\Phi\|$ in dimension${}\le n$ satisfies $\|\Phi\| \le C(n)$ for some constant $C(n)$ depending only on $n$, independent of~$G$.
\end{theorem}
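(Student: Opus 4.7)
The plan is to construct $\Phi$ by induction on the dimension, using one application of the mitosis functor $\bA$ for each dimension that must be killed, so that after $n$ iterations the desired chain null-homotopy lands in $\Z B\bA^n(G)_{*+1}$. The underlying algebraic input is the defining mitosis relation in $\bA(G)$: for each $g\in G$ there exist two distinguished ``mitosis generators'' $s,d\in\bA(G)$ (independent of the ambient group and produced functorially by $\bA$) together with a conjugation identity that expresses $g$ as a product of a bounded number of commutators in $\bA(G)$. Translated into the bar chain complex, this identity yields an explicit $(k+1)$-chain in $\Z B\bA(G)$ that bounds a single $k$-simplex $[g_1|\cdots|g_k]$ of $\Z BG$ (modulo lower-degree corrections coming from the bar differential) whose diameter is bounded by a constant depending only on $k$.

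First I would make precise the one-dimension statement: for each fixed $k\ge 1$ there is a universal formula $\psi_k$ that, applied to a bar simplex $[g_1|\cdots|g_k]$ of $BG$, returns an element of $\Z B\bA(G)_{k+1}$ of diameter at most $C_k$, bounding the cycle obtained after correcting by $\psi_{k-1}$ on the boundary. The universality is the heart of the matter: because $\psi_k$ is built from the $g_i$'s together with a fixed, finite set of mitosis generators of $\bA$ via a formula determined solely by $k$, the bound $C_k$ can be chosen independently of $G$. Given $\psi_k$, I would run the main induction: suppose $\Phi$ has been defined on $\Z BG_*$ for $*<k$ as a partial null-homotopy into $\Z B\bA^{k-1}(G)_{*+1}$ of universally bounded norm. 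Composing with $\bA^{k-1}(G)\hookrightarrow \bA^k(G)$, the chain $i(\sigma)-\Phi(\partial\sigma)$ is a cycle in $\Z B\bA^{k-1}(G)_k$ of controlled diameter; applying $\psi_k$ to it inside the group $\bA^k(G)=\bA(\bA^{k-1}(G))$ supplies a bounding chain in $\Z B\bA^k(G)_{k+1}$ which I take to be $\Phi(\sigma)$. Iterating up to $k=n$ produces the partial null-homotopy claimed by the theorem.

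The main obstacle, and the step deserving the most care, is verifying that the multiplicative blow-up at each inductive step depends only on the dimension and not on $G$ or the particular group elements. This reduces entirely to the combinatorial structure of the mitosis presentation: one must check that the commutator-expression for a generic element obtained from the mitosis relation, together with the standard simplicial identities used to convert such an expression into a bar 2-chain, involves only a bounded number of bar simplices formed from the input together with the universal mitosis generators. Once this is established, the per-step estimate gives a recursion $C(k)\le A_k\cdot C(k-1)+B_k$ with universal constants $A_k,B_k$, which solves to a dimension-only bound $C(n)$ for $\|\Phi\|$, as required.
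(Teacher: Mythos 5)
This theorem is quoted from~\cite[Theorem~5.2]{Cha:2014-1}; the present paper does not prove it, so there is no in-paper argument against which to compare your proposal. Judged on its own terms, your overall strategy --- one application of the mitosis functor $\bA$ per homological degree, a universal per-degree formula $\psi_k$ with dimension-only norm bounds, and an induction on degree --- is in the spirit of the Baumslag--Dyer--Heller vanishing argument that the cited theorem makes quantitative. However, you have mischaracterized the algebraic input. The mitosis structure does not assert that a generic $g\in G$ is a bounded product of commutators in $\bA(G)$; rather, $\bA(G)$ contains two distinguished elements $s$ and $d$ (functorial, independent of $g$) such that $dgd^{-1}$ commutes with every element of $G$ and $sgs^{-1}=g\cdot(dgd^{-1})$ for all $g\in G$. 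The chain-level consequence actually used is the combination of (i) explicit prism homotopies showing that conjugation by $s$ and by $d$ is chain homotopic to the identity on $\Z B\bA(G)_*$, and (ii) an Eilenberg--Zilber/shuffle chain homotopy that turns the homomorphism $g\mapsto g\cdot(dgd^{-1})$ of $G$ into the commuting product $G\cdot dGd^{-1}\subset\bA(G)$ into a bar-complex formula whose ``pure'' part recovers the inclusion $i$ and whose cross terms involve only lower-degree data. Those cross terms are precisely why one cannot stop after a single application of $\bA$ and must iterate $n$ times.

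The more substantial gap is that the existence of $\psi_k$ with a $G$-independent bound is the entire content of the cited theorem, and your proposal asserts it rather than establishing it. You neither exhibit the universal formula nor explain why the number of bar simplices it produces is controlled by $k$ alone. The reason this works is combinatorial: the prism and shuffle homotopies are given by explicit formulas whose term counts are functions of $k$ only, independent of the group and of the actual entries $g_i$; tracking those counts through the iteration would indeed yield a recursion of the form $C(k)\le A_k\,C(k-1)+B_k$, but carrying out that tracking is the proof. As written, the recursion with ``universal constants'' is a restatement of the goal rather than an argument for it.
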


In this paper, we need the following cellular version for $C_*(BG)$, which we can obtain as a consequence of Theorem~\ref{theorem:quantitative-chain-homotopy-moore}.

\begin{corollary}
  \label{corollary:quantitative-chain-homotopy-cellular}
  For each $n>0$, there is a partial chain null-homotopy $P\colon C_*(BG) \to C_{*+1}(B\bA^n(G))$ in dimension${}\le n$ satisfying the conclusion of Theorem~\ref{theorem:quantitative-chain-homotopy-moore}:
  $P\partial + \partial P = j$ where $j\colon C_*(BG) \hookrightarrow C_*(B\bA^n(G))$ is inclusion-induced, and $\|P\| \le C(n)$ for some constant~$C(n)$ depending only on $n$, independent of~$G$.
\end{corollary}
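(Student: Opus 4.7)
The plan is to transfer the Moore-complex null-homotopy $\Phi$ of Theorem~\ref{theorem:quantitative-chain-homotopy-moore} to the cellular level via the classical chain equivalence between the Moore complex and the cellular (= normalized Moore) complex of a simplicial set. For any simplicial set $X$, the cellular chain complex $C_*(X)$ of its geometric realization is canonically isomorphic to the quotient $\Z X_*/D_*(X)$, where $D_*(X)$ is the subcomplex generated by degenerate simplices; thus the quotient $\pi_X\colon \Z X_*\to C_*(X)$ is a chain map with $\|\pi_X\|=1$. By the Dold--Kan / Eilenberg--MacLane normalization theorem, this surjection splits naturally as chain complexes: $\Z X_* = D_*(X)\oplus N_*(X)$ with $N_n(X) = \bigcap_{j<n}\ker d_j$, and the corresponding inclusion $C_*(X)\cong N_*(X)\hookrightarrow \Z X_*$ provides a natural chain-map section $s_X$ of $\pi_X$. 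An explicit formula is $s_X(\sigma) = (1-s_{n-1}d_{n-1})\cdots(1-s_0 d_0)(\sigma)$ for $\sigma$ a non-degenerate $n$-simplex, which expands to a $\pm 1$-combination of at most $2^n$ simplices obtained from $\sigma$ by iterated faces and degeneracies; hence $\|s_X\|$ in dimension~${}\le n$ is bounded by a constant depending only on~$n$, uniformly in~$X$.

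Applying this to $X=BG$ yields a chain-map section $s_G\colon C_*(BG)\to \Z BG_*$, and applying it to $X=B\bA^n(G)$ yields the chain-map projection $\pi'\colon \Z B\bA^n(G)_*\to C_*(B\bA^n(G))$. I would then set
\[
  P \;=\; \pi'\circ \Phi\circ s_G \colon C_*(BG)\longrightarrow C_{*+1}(B\bA^n(G)).
\]
Since $s_G$ and $\pi'$ are chain maps and $\partial\Phi+\Phi\partial = i$ on $\Z BG_*$ in dimension ${}\le n$,
\[
  \partial P + P\partial \;=\; \pi'\,(\partial\Phi+\Phi\partial)\,s_G \;=\; \pi'\circ i\circ s_G.
\]
The group inclusion $G\hookrightarrow \bA^n(G)$ induces a simplicial map $BG\to B\bA^n(G)$ that commutes with degeneracy operators, so $i$ carries $D_*(BG)$ into $D_*(B\bA^n(G))$; by naturality this gives $\pi'\circ i = j\circ \pi_{BG}$. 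Combined with $\pi_{BG}\circ s_G = \id$, this yields the desired identity $\partial P + P\partial = j$ in dimension ${}\le n$. The norm estimate $\|P\|\le \|\pi'\|\cdot \|\Phi\|\cdot \|s_G\|\le 1\cdot C(n)\cdot C'(n)$ then depends only on~$n$.

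The only non-formal input is the bounded-norm natural chain-map section $s_X$, which is classical and follows from the explicit normalization formula above (together with the standard simplicial-identity check that this operator lands in $\bigcap_j \ker d_j$ and that $N_*(X)$ is stable under the Moore differential). Everything else is a one-line naturality computation, so I do not anticipate a substantive obstacle.
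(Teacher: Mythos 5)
Your proposal is correct and follows essentially the same route as the paper: both define $P = \pi_{\bA^n(G)} \circ \Phi \circ (\text{chain-map section of }\pi_{BG})$, verify $P\partial + \partial P = j$ by the same naturality computation, and bound $\|P\|$ by bounding the norm of the section. The only difference is which explicit natural section is used: you take the Dold--Kan projection onto $N_*(X) = \bigcap_{j<n}\ker d_j$ via $\prod_j(1 - s_j d_j)$, while the paper cites MacLane's homotopy-inverse construction $g = h_0\cdots h_{n+1}$ (with $h_k = 1 - t_k\partial - \partial t_k$) and bounds its norm by $(2n+3)^{n+1}$; both give a constant depending only on $n$, so the proofs are interchangeable.
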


\begin{proof}
  It is known that $C_*(BG)$ is isomorphic to a quotient $\Z BG_* / D_*$ of the Moore chain complex, where $D_*$ is the subcomplex of $\Z BG_*$  generated by the degenerated simplices.
  Moreover, the quotient map $\pi = \pi_G\colon \Z BG_* \to \Z BG_* / D_* = C_*(BG)$ is a chain homotopy equivalence, which has a chain homotopy inverse $g = g_G\colon C_*(BG) \to \Z BG_*$ such that $\pi_G \circ g_G = \id_{C_*(BG)}$~\cite[Normalization Theorem~VIII.6.1]{MacLane:1995-1}.

  Define a chain homotopy $P\colon C_*(BG) \to C_{*+1}(B\bA^n G)$ by $P=\pi_{\bA^n(G)} \circ \Phi \circ g_G$.
  We have $P\partial + \partial P = \pi_{\bA^n(G)} \circ i \circ g_G$.
  Note that $\pi_{\bA^n(G)} \circ i = j \circ \pi_G$, i.e.,\ the following diagram commutes:
  \[
    \begin{tikzcd}[sep=large]
      \Z BG_* \ar[r,hook,"i"] \ar[d,"\pi_G"'] & \Z B\bA^n(G)_* \ar[d,"\pi_{\bA^n(G)}"]
      \\
      C_*(BG) \ar[r,hook,"j"'] & C_*(B\bA^n(G))
    \end{tikzcd}
  \]
  From this, it follows that $P\partial + \partial P = j \circ \pi_G \circ g_G = j$, i.e.,\ $P$ is a partial chain null-homotopy for~$j$ in dimension${}\le n$.

  Inspecting the construction of the chain homotopy inverse $g$ described in~\cite[Proof of Theorem~VIII.6.1]{MacLane:1995-1}, it is verified that the norm of $g$ in dimension${}\le n$ is bounded by $(2n+3)^{n+1}$.
  (Using notations used in ~\cite[p.~236--237]{MacLane:1995-1}, $g\colon C_*(BG) \to \Z BG_*$ is induced by a chain map $h\colon \Z BG_* \to \Z BG_*$, where $h=h_0 h_1 \cdots h_{n+1}$ in dimension${}\le n$, $h_k = 1 - t_k\partial - \partial t_k$, $t_k$ is a chain homotopy with $\|t_k\| = 1$;
  since $\|\partial\| \le n+1$, the assertion $\|g\| \le (2n+3)^{n+1}$ follows.)

  Since $\|\pi\| = 1$ for the projection~$\pi$, it follows that
  \[
    \|P\| \le \|\pi\| \cdot \|\Phi\| \cdot \|g\| \le (2n+3)^{n+1} \cdot \|\Phi\|
  \]
  in dimension${}\le n$.
\end{proof}

\section{Quantitative PL bordism}
\label{section:quantitative-pl-bordism}

In Section~\ref{subsection:proof-main-quantitative-bordism}, we give a proof of Theorem~\ref{theorem:main-quantitative-bordism}\@.
A key step of the proof is Proposition~\ref{proposition:bordism-toward-lower-skeleton}, which constructs a quantitative bordism from a chain homotopy we described in Corollary~\ref{corollary:quantitative-chain-homotopy-cellular}.
We prove Proposition~\ref{proposition:bordism-toward-lower-skeleton} in Section~\ref{subsection:bordism-from-chain-null-homotopy}.

In this section, we work in the category of PL manifolds.
A triangulation of a manifold is always a combinatorial triangulation.

\subsection{Proof of Theorem~\ref{theorem:main-quantitative-bordism}}
\label{subsection:proof-main-quantitative-bordism}

For brevity, we will write ``$N\le C(n)$'' or ``$N\le C(n)\cdot \Delta(M)$'' when the inequality holds for some constant $C(n)$ which depends only on~$n$.
We will use the same symbol $C(n)$ for different constants appearing on different occasions.

The goal of this section is to prove the following:

\begin{theorem-named}[Theorem~\ref{theorem:main-quantitative-bordism}]
  Let $M$ be a triangulated closed $n$-manifold over the acyclic group $\Gamma = \cA(G)$ where $G$ is an arbitrary group.
  Then there exists a triangulated bordism $W$ over $\Gamma$ from $M$ to a trivial end such that $\Delta(W) \leq C(n) \cdot \Delta(M)$.
  In addition, $W$ is an oriented bordism if $M$ is oriented.
\end{theorem-named}

\begin{proof}
  First, we may assume that the given $M\to B\cA(G)$ is the composition of a map $\phi \colon M\to BG$ and the functorial inclusion $BG\hookrightarrow B\cA(G)$.
  To verify this, recall from Section~\ref{section:BDH-preliminaries} that $\cA(G)$ is the direct limit of the sequence $G\hookrightarrow \bA(G) \hookrightarrow \bA^2(G) \hookrightarrow \cdots$.
  Since $\pi_1(M)$ is finitely generated, $\pi_1(M) \to \cA(G)$ factors through $\bA^k(G)$ for some $k$, and thus we may assume that $M\to B\cA(G)$ factors through $B\bA^k(G)$.
  Since our goal is to construct a bordism over $\cA(G)$, we can replace $G$ with $\bA^k(G)$, to assume that $M\to B\cA(G)$ factors through a map $\phi\colon M\to BG$.
  This shows the claim.

  In addition, once we have $\phi\colon M\to BG$, we may assume that $\phi$ is simplicial-cellular, by applying the following result in~\cite{Cha:2014-1} to~$\phi$.

  \begin{theorem}[{\cite[A special case of Theorem~3.7]{Cha:2014-1}}]
    \label{theorem:simplicial-cellular-approximation}
    Suppose that $\phi\colon K\to BG$ is a map of a simplicial complex~$K$.
    Then $\phi$ is homotopic to a simplicial-cellular map.
  \end{theorem}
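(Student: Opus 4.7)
The plan is to build a simplicial-cellular approximation by induction on the skeleta of $K$, exploiting three structural features of $BG$: it has a unique $0$-simplex $*$, its $k$-simplices are labeled by tuples $(g_1,\dots,g_k)\in G^k$ (the bar construction), and $\pi_i(BG)=0$ for $i\ge 2$. First I would fix a total order $v_0<v_1<\cdots$ on the vertices of $K$. Because $BG$ is path-connected, a preliminary homotopy moves $\phi$ so that every vertex of $K$ maps to $*$; this is carried out by choosing paths from each $\phi(v)$ to $*$ on $K^{(0)}$ and extending via the homotopy extension property for the CW pair $(K,K^{(0)})$.

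Next, on each edge $e=[v_i,v_j]$ with $i<j$, the restriction $\phi|_e$ is a loop at $*$ representing some $g_e\in\pi_1(BG)=G$. I would homotope $\phi|_e$ rel endpoints onto the $1$-cell $[g_e]$ of $BG$ traversed linearly from $v_i$ to $v_j$; when $g_e=1$ the edge collapses to $*$, which is permitted since the definition of a simplicial-cellular map allows the restriction on a simplex to be a linear surjection onto a lower-dimensional simplex. These edge-homotopies are performed simultaneously and extended by the homotopy extension property to a homotopy of $\phi$.

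For the inductive step, assume $\phi|_{K^{(k-1)}}$ is already simplicial-cellular, where $k\ge 2$. For each $k$-simplex $\sigma=[v_{i_0},\dots,v_{i_k}]$ of $K$ with $i_0<\cdots<i_k$, the inductive hypothesis assigns elements $h_a\in G$ to the consecutive edges $[v_{i_{a-1}},v_{i_a}]$. I would define $\phi'|_\sigma$ to be the linear simplicial-cellular map onto the $k$-simplex $[h_1|h_2|\cdots|h_k]$ of $BG$. The critical compatibility is that the non-consecutive edges $[v_{i_a},v_{i_b}]$ are mapped to $[h_{a+1}h_{a+2}\cdots h_b]$; this reduces by induction to the triangle case, where the existence of the $2$-simplex filling $\phi|_{[v_{i_a},v_{i_{a+1}},v_{i_{a+2}}]}$ forces the cocycle relation $h_{a+1}h_{a+2}=g_{i_a,i_{a+2}}$ in $\pi_1(BG)=G$. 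Once $\phi'|_\sigma$ is defined, it agrees with $\phi|_{\partial\sigma}$ on the boundary by construction, and since $\pi_k(BG)=0$ for $k\ge 2$, the two extensions $\phi|_\sigma$ and $\phi'|_\sigma$ are homotopic rel $\partial\sigma$. Performing these homotopies simultaneously on all $k$-simplices and extending by the homotopy extension property completes the inductive step and, after finitely many steps on each connected component (or a transfinite argument in general), the construction.

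The main obstacle I anticipate is the verification of the cocycle conditions needed to realize the boundary data of each $k$-simplex by an honest $k$-simplex of $BG$; this is exactly where one uses that $\phi$ is defined on the $k$-simplex (not merely on its boundary), yielding the nullhomotopies that enforce $h_1h_2\cdots h_k=g_\sigma$ in $\pi_1(BG)$. A secondary issue is bookkeeping degenerate images, when some $h_a=1$, but this is harmless because the definition of a simplicial-cellular map accommodates collapses via linear surjections $\Delta^k\to\Delta^\ell$ with $\ell<k$.
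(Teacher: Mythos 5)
Your proposal is essentially correct, but it follows a genuinely different route from the source the paper relies on: the paper does not reprove this statement, it quotes it as a special case of \cite[Theorem~3.7]{Cha:2014-1}, which is a general simplicial-cellular approximation theorem (without subdivision) for maps into geometric realizations of simplicial sets, proved there by simplicial-set methods rather than by any asphericity argument. You instead exploit the two special features of $BG$ as the bar construction --- it is aspherical, and its simplices are labeled by tuples of group elements --- to give a direct skeleton-by-skeleton proof of exactly this special case: push all vertices to the unique $0$-cell, straighten each edge onto the $1$-cell of its class $g_e\in\pi_1(BG)=G$, and then map each ordered simplex $[v_{i_0},\dots,v_{i_k}]$ linearly onto $[h_1|\cdots|h_k]$, using that every $2$-simplex of $K$ is filled to get the cocycle identity $g_{ac}=g_{ab}g_{bc}$ and using $\pi_k(BG)=0$, $k\ge 2$, to homotope rel boundary. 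Two points should be stated a bit more carefully, though your construction already provides them: the inductive hypothesis must be that on every simplex of $K^{(k-1)}$ the map \emph{is} the canonical linear map determined by the edge labels (not merely that it is simplicial-cellular), and the compatibility to check is on facets rather than edges --- since the bar face maps multiply consecutive labels, facet agreement reduces, as you indicate, to the $2$-face cocycle identity. Also, the number of stages is governed by $\dim K$, not by connected components; for infinite-dimensional $K$ one makes the stage-$k$ homotopy stationary on $K^{(k-1)}$ and runs the stages on time intervals accumulating at $1$. Your handling of degenerate labels is fine, since collapses $\Delta^k\to\Delta^\ell$ are allowed for simplicial-cellular maps. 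In sum, your argument buys a short, self-contained proof for the target $BG$, while the cited argument of \cite{Cha:2014-1} buys the statement for much more general simplicial-cell targets, which is the form used elsewhere in that work.
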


  Note that $\phi$ maps $M$ into the $n$-skeleton $BG^{(n)}$ since $\dim M=n$ and $\phi$ is simplicial-cellular.
  Let $M_n=M$ and $\phi_n = \phi\colon M_n \to BG^{(n)}$.

  Inductively, suppose that we have obtained a closed $n$-manifold $M_p$ equipped with a simplicial-cellular map $\phi_p \colon M_p \to BG^{(p)}$, $0 < p \le n$.
  Let $i\colon C_*(BG) \to C_*(B\bA^n(G))$ be the inclusion-induced chain map.
  By Corollary~\ref{corollary:quantitative-chain-homotopy-cellular}, there is a partial chain null-homotopy $P\colon C_*(BG) \to C_{*+1}(B\bA^n(G))$ for $i$, defined in dimension${}\le n=\dim M$, which satisfies $\|P\| \le C(n)$.
  This enables us to apply Proposition~\ref{proposition:bordism-toward-lower-skeleton} stated below to
  $(M,L,K,f) = (M_p,BG^{(p)},B\bA^n(G),\phi_p)$:

  \begin{proposition}
    \label{proposition:bordism-toward-lower-skeleton}
    Let $f\colon M\to L$ be a simplicial-cellular map of a closed triangulated $n$-manifold $M$ into a simplicial-cell complex $L$ with $\dim L=p$, which is a subcomplex of another simplicial-cell complex~$K$.
    Suppose that there is a partial chain null-homotopy $P \colon C_*(L) \to C_{*+1}(K)$, defined in dimension $p$ and $p-1$, for the chain map $i \colon C_*(L)\to C_*(K)$ induced by the inclusion $L\hookrightarrow K$, i.e.,\ $P\partial + \partial P = i$ on $C_p(L)$ and $C_{p-1}(L)$.
    Suppose $\|P\| < \infty$.
    Then there exist a closed triangulated $n$-manifold $N$ over the $(p-1)$-skeleton $K^{(p-1)}$ and a triangulated bordism $W$ over~$K^{(p+1)}$ from $M$ to~$N$.
    \[
      \begin{tikzcd}[column sep={15mm,between origins},row sep=large]
        N \ar[r,hook] \ar[d] & W \ar[rrrd] & M \ar[l,hook'] \ar[d,crossing over]
        \\
        K^{(p-1)} \ar[rr,hook] & & K^{(p)} \ar[rr,hook] & & K^{(p+1)}
      \end{tikzcd}
    \]
    In addition, $W$ and $N$ have triangulations with linearly bounded complexity:
    $\Delta(W) \le C\cdot \Delta(M)$,
    and consequently $\Delta(N) < C\cdot \Delta(M)$, where $C = C(n, \|P\|)$ is a constant depending only on $n=\dim M$ and the norm~$\|P\|$.
  \end{proposition}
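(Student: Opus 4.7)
The plan is to build $W$ by performing surgery-like attachments on $M\times I$, using the chain null-homotopy $P$ as the combinatorial recipe and the transversality results of Section~\ref{section:simplicial-cellular-transversality} to realize that recipe geometrically. At a high level, the chain $P(\sigma)\in C_{p+1}(K)$ prescribes how to kill the inverse-image submanifold $Y_\sigma = f^{-1}(\hat\sigma)\subset M$ by a $(p+1)$-dimensional cobordism, and the identity $\partial P(\sigma)=\sigma - P(\partial\sigma)$ encodes exactly which boundary pieces must match up.

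First, I would apply Theorem~\ref{theorem:simplicial-cellular-transversality-submanifold} to $f\colon M\to L$ to extract, for every $p$-simplex $\sigma$ of~$L$, a framed properly embedded $(n-p)$-submanifold $Y_\sigma\subset M$ with product neighborhood $Y_\sigma\times \sigma_0$, and apply Theorem~\ref{theorem:simplicial-cellular-transversality-cobordism} to extract, for every $(p-1)$-simplex $\tau$ of $L$, a framed embedded cobordism $Z_\tau\subset M$ whose boundary is a prescribed combination of parallel copies of the~$Y_\sigma$. Writing $P(\sigma)=\sum_\rho c^\rho_\sigma\,\rho$ in~$C_{p+1}(K)$, I would then attach to $M\times\{1\}$, for each $(p+1)$-simplex $\rho$ of $K$ with $c^\rho_\sigma\ne 0$, a total of $|c^\rho_\sigma|$ copies of the model piece $Y_\sigma\times\Delta^{p+1}$, glued along $Y_\sigma\times(\text{one $p$-face of }\Delta^{p+1})\cong Y_\sigma\times\sigma_0$, with map extended by the characteristic map $\Delta^{p+1}\to\rho\subset K$. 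The remaining $p$-faces of these attached models form the top boundary, and the identity $\partial P(\sigma)=\sigma-P(\partial\sigma)$ translates geometrically into the statement that after this attachment the $Y_\sigma$-piece is eliminated from the top boundary, with the residue represented by a combination of $p$-chain contributions encoded by $P(\partial\sigma)$.

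Second, to push the top boundary further down into $K^{(p-1)}$, I would use the $Z_\tau$ cobordisms together with the partial chain homotopy $P$ in dimension $p-1$: the identity $\tau = \partial P(\tau)+P(\partial\tau)$ lets one cobound the leftover $P(\partial\sigma)$ contributions by thickening each $Z_\tau$ by a $\Delta^{p}$-worth of pieces, again guided by the chain $P(\tau)$ and attached via the product neighborhood of $Z_\tau$ supplied by Theorem~\ref{theorem:simplicial-cellular-transversality-submanifold}\ref{item:simplicial-cellular-transversality} (applied to the retracted map $g$ of Section~\ref{subsection:simplicial-cellular-transversality-cobordism}). The boundary-matching relation $\partial Z_\tau = \sum_\sigma d_{\tau\sigma} Y_\sigma$ (algebraically in the oriented case, modulo~$2$ otherwise) combined with the chain-level identities for $P$ guarantees that all free faces can be paired consistently, producing a closed $n$-manifold $N$ whose reference map lands in $K^{(p-1)}$, while the body $W$ is assembled entirely from $M\times I$, the $Y_\sigma\times\Delta^{p+1}$-models, and the $Z_\tau\times\Delta^{p}$-models, so maps to $K^{(p+1)}$.

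For the complexity bound, the number of attached models per $Y_\sigma$ (and per $Z_\tau$) is controlled by $\|P\|$ up to a dimensional constant, each model has complexity proportional to $\Delta(Y_\sigma)$ (resp.\ $\Delta(Z_\tau)$), and $\sum_\sigma\Delta(Y_\sigma)+\sum_\tau\Delta(Z_\tau)\le C(n)\cdot\Delta(M)$ by the quantitative conclusions of Theorems~\ref{theorem:simplicial-cellular-transversality-submanifold} and~\ref{theorem:simplicial-cellular-transversality-cobordism}; thus the total simplex count of $W$ is $C(n,\|P\|)\cdot\Delta(M)$. A compatible triangulation of $W$ is obtained by starting from the product cell structures of Lemma~\ref{lemma:transversality-cell-structures} and applying the pulling triangulation (Lemma~\ref{lemma:pulling-triangulation}) without introducing new vertices. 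The hardest step will be verifying that all the attached $\Delta^{p+1}$- and $\Delta^p$-thickened models actually glue into a genuine PL manifold bordism rather than an $(n+1)$-dimensional pseudomanifold: this forces careful bookkeeping of how the free $p$-faces of $Y_\sigma\times\Delta^{p+1}$ pair with the $Z_\tau$-thickenings, how framings and orientations propagate through the pairings, and, in the unoriented case, how to absorb the parity discrepancy $k_{\tau\sigma}-d_{\tau\sigma}$ using small cancelling $Y_\sigma\times D^1$ pieces inside the product neighborhoods.
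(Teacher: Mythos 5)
Your overall architecture matches the paper's: extract $Y_\sigma$ and $Z_\tau$ from Theorems~\ref{theorem:simplicial-cellular-transversality-submanifold} and~\ref{theorem:simplicial-cellular-transversality-cobordism}, use $P$ in dimensions $p$ and $p-1$ as the recipe for attaching $Y_\sigma\times\Delta^{p+1}$- and $Z_\tau\times\Delta^{p}$-blocks to $M\times I$, and control complexity via Lemma~\ref{lemma:transversality-cell-structures} and the pulling triangulation. The complexity estimate and the unoriented modification are also in line with the paper. However, the step you defer as ``the hardest step'' is precisely the mathematical content of the proposition, and as described your gluing recipe does not work.

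Concretely: you propose to glue each copy of $Y_\sigma\times\Delta^{p+1}_\mu$ (for $\mu$ appearing in $P\sigma=\sum_\mu r_{\mu\sigma}\mu$) to $M\times\{1\}$ along $Y_\sigma\times(\text{one $p$-face})\cong Y_\sigma\times\sigma_0$. This is not well defined --- an individual $\mu$ with $r_{\mu\sigma}\ne 0$ need not have $\sigma$ among its faces, since $\sigma$ appears in $\sum_\mu r_{\mu\sigma}\partial\mu=\sigma-P(\partial\sigma)$ only after cancellation across different $\mu$'s and against the $P(\partial\sigma)$ terms --- and even when it is defined, gluing several $(n{+}1)$-dimensional blocks along the same $n$-dimensional set $Y_\sigma\times\sigma_0$ produces a non-manifold. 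The paper's resolution is an auxiliary $(p{+}1)$-dimensional cobordism $V_\sigma$: take the disjoint union of all signed $p$-simplices occurring in the identity $\bigl(\sum_{\eta,\tau}s_{\eta\tau}d_{\tau\sigma}\eta+\sum_\mu r_{\mu\sigma}\partial\mu\bigr)-\sigma=0$, cross with $I$, and attach a $1$-handle $\Delta^p\times I$ for each algebraically cancelling pair. Then $Y_\sigma\times V_\sigma$ is the unique block meeting $M$ along $Y_\sigma\times\sigma_0$, and the pieces $Y_\sigma\times\Delta^{p+1}_\mu$ are glued to it along their \emph{entire} boundary spheres $Y_\sigma\times\partial\Delta^{p+1}_\mu$, not along single faces. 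A second instance of the same issue: $\partial Z_\tau=\bigcup_\sigma d_{\tau\sigma}Y_\sigma$ holds only algebraically, so before the products $Z_\tau\times\Delta^p_\eta$ can be matched against the $V$-side one must first cap off each geometrically present but algebraically cancelling pair of boundary copies of $Y_\sigma$ by a collar $Y_\sigma\times I$; you mention this device only for the mod-$2$ discrepancy in the unoriented case, but it is needed in the oriented case as well. Without an explicit mechanism realizing the algebraic cancellations geometrically (the $1$-handles in $V_\sigma$ and the $Y_\sigma\times I$ caps on $Z_\tau$), the blocks you list do not assemble into a PL manifold, so the proof is incomplete at its central point.
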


  We postpone the proof of Proposition~\ref{proposition:bordism-toward-lower-skeleton} and continue the proof of Theorem~\ref{theorem:main-quantitative-bordism}\@.
  In our case, from Proposition~\ref{proposition:bordism-toward-lower-skeleton}, it follows that there is a closed triangulated $n$-manifold $M_{p-1}$ equipped with a map $\phi_{p-1} \colon M_{p-1} \to B\bA^n(G)^{(p-1)}$ and a triangulated bordism $W_p$ over $B\cA(G)$ from $M_p$ to $M_{p-1}$ such that $\Delta(W_p) \le C(n)\cdot \Delta(M_p)$.
  We may assume that $\phi_{p-1}$ is simplicial-cellular by Theorem~\ref{theorem:simplicial-cellular-approximation}.
  By replacing $G$ with $\bA^n(G)$, we may assume that $M_{p-1}$ is over~$BG^{(p-1)}$ via $\phi_p$ so that we can continue the induction.

  By the above inductive construction for $p=n$, $n-1$,~\ldots,~$1$, now we have the manifolds $M=M_n,M_{n-1},\ldots,M_0$ and bordisms $W_n,\ldots,W_1$ between them.
  Concatenate the bordisms $W_p$ to obtain
  \[
    W=W_{n}\cupover{M_{n-1}} W_{n-1}\cupover{M_{n-2}} \cdots \cupover{M_1} W_0
  \]
  and let $N=M_0$.
  The bordism $W$ is over $\cA(G)$, from $M_n=M$ to~$N=M_0$.
  Since $N$ is equipped with $\phi_0\colon N=M_0 \to BG^{(0)}=\{*\}$, $N$ is over $\cA(G)$ via a constant map.
  Also, we have $\Delta(W) \le \Delta(W_n)+\cdots+\Delta(W_1) \le C(n)\cdot \Delta(M)$.
  This completes the proof of Theorem~\ref{theorem:main-quantitative-bordism}, modulo the proof of Proposition~\ref{proposition:bordism-toward-lower-skeleton} which we give below.
\end{proof}

\subsection{Construction of bordism from a chain null-homotopy}
\label{subsection:bordism-from-chain-null-homotopy}

\begin{proof}[Proof of Proposition~\ref{proposition:bordism-toward-lower-skeleton}]
  Suppose that $f\colon M\to L$ is simplicial-cellular, where $M$ is a closed triangulated $n$-manifold and $L$ is a simplicial-cell complex.
  In addition, suppose that $L$ is a subcomplex of a simplicial-cell complex $K$ and $P \colon C_*(L) \to C_{*+1}(K)$ is a partial chain null-homotopy defined in dimension $p$ and $p-1$ for the inclusion-induced $i \colon C_*(L)\to C_*(K)$ such that $\|P\| < \infty$, where $p = \dim L$.
  Our goal is to construct a bordism $W$ over $K^{(p+1)}$, from $M$ to another $n$-manifold $N$ which is over $K^{(p-1)}$, such that $\Delta(W) \le C(n,\|P\|)\cdot \Delta(M)$.
  We will focus on the oriented case, and discuss modifications for the unoriented case at the end of the proof.

  If $n<p$, then the conclusion is trivial since the given $f$ sends $M$ to~$L^{(p-1)}\subset K^{(p-1)}$.
  So, suppose $n\ge p$.

  We will construct a desired bordism $W$ from several simpler bordisms.
  First, we will use the algebraic data from the chain null-homotopy~$P$ to construct a bordism over $K^{(p)}$, which we denote by $V_\sigma$ below.
  For a $p$-simplex $\sigma$ of~$L$ and a $(p-1)$-simplex $\tau$ of~$L$, write $\partial\sigma$, $P\sigma$ and $P\tau$ as linear combinations of simplices
  \[
    \partial\sigma = \sum_\tau d_{\tau\sigma} \cdot \tau, \quad P\sigma = \sum_\mu r_{\mu\sigma} \cdot \mu, \quad P\tau = \sum_\eta s_{\eta\tau}  \cdot\eta
  \]
  where $\tau$, $\mu$ and $\eta$ in the sums vary over $(p-1)$, $(p+1)$ and $p$-simplices of $L$, $K$ and $K$, respectively, and $d_{\tau\sigma}, r_{\mu\sigma}, s_{\eta\tau}\in \Z$.
  Evaluate $P\partial + \partial P - i = 0$ on~$\sigma$, to obtain an equation
  \begin{equation}
    \label{equation:chain-homotopy-equation}
    \Biggl( \sum_{\eta,\tau} s_{\eta\tau} d_{\tau\sigma} \cdot \eta + \sum_\mu r_{\mu\sigma} \cdot \partial\mu \Biggr) - \sigma = 0
  \end{equation}
  which holds in~$C_p(K)$, viewing $\sigma$ as a simplex in~$K$ via $L\subset K$.

  For each~$\eta$, let $\Delta_\eta^p$ be an associated copy of the standard $p$-simplex.
  Similarly, for each~$\mu$, let $\Delta_\mu^{p+1}$ be a copy of the standard $(p+1)$-simplex.
  Let
  \begin{equation}
    \label{equation:partial-V_sigma}
    \partial_- V_\sigma = \Delta^p_\sigma,
    \quad
    \partial_+ V_\sigma = \Biggl( \bigcup_{\eta,\tau} s_{\eta\tau} d_{\tau\sigma} \cdot \Delta_\eta^p \Biggr) \cup \Biggl( \bigcup_\mu r_{\mu\sigma} \cdot \partial\Delta_\mu^{p+1} \Biggr)
  \end{equation}
  and consider the complex $\partial_+ V_\sigma \cup -\partial_- V_\sigma$, which is an oriented $p$-manifold with boundary.
  Here each simplex is oriented by its sign in the expression~\eqref{equation:partial-V_sigma}.
  Viewing the left-hand side of~\eqref{equation:chain-homotopy-equation} as a formal sum of $(\pm1)$-signed $p$-simplices of $K$, the $p$-simplices of $\partial_+ V_\sigma \cup -\partial_- V_\sigma$ are in 1-1 correspondence with the terms in~\eqref{equation:chain-homotopy-equation}.
  Take the product cobordism $(\partial_+ V_\sigma \cup -\partial_- V_\sigma)\times I$, and for each canceling pair in \eqref{equation:chain-homotopy-equation}, attach a 1-handle $\Delta^p \times I$ that realizes the algebraic cancellation, i.e.,\ identify $\Delta^p \times 0$ and $\Delta^p \times 1$ with the two paired $p$-simplices in $(\partial_+ V_\sigma \cup -\partial_- V_\sigma) \times 1 \subset (\partial_+ V_\sigma \cup -\partial_- V_\sigma) \times I$ which are canceled in~\eqref{equation:chain-homotopy-equation}.
  Let $V_\sigma$ be the resulting $(p+1)$-manifold.
  Note that $\partial V_\sigma$ is of the form $\partial V_\sigma = \partial_+V_\sigma \cup \partial_0 V_\sigma \cup -\partial_-V_\sigma$, where the belt sphere $\partial\Delta^p \times I$ of each 1-handle is contained in~$\partial_0 V_\sigma$.

  The characteristic maps of $\eta$, $\mu$ and $\sigma$ induce a map $\partial_+ V_\sigma \cup - \partial_- V_\sigma \to K^{(p)}$.
  It extends to the 1-handles, and thus $V_\sigma$ is over~$K^{(p)}$.
  In addition, $\partial_0 V_\sigma$ is over $K^{(p-1)}$ since the boundary of each $p$-simplex of $\partial_+ V_\sigma \cup -\partial_- V_\sigma$ is sent to~$K^{(p-1)}$.

  By Theorem~\ref{theorem:simplicial-cellular-transversality-submanifold}, the inverse image $Y_\sigma = f^{-1}(\hat\sigma)$ is a closed $(n-p)$-submanifold in~$M$.
  Let $V = (-1)^{n-p} \bigcup_\sigma Y_\sigma \times V_\sigma$ and $\partial_\bullet V = (-1)^{n-p} \bigcup_\sigma Y_\sigma\times \partial_\bullet V_\sigma$ for $\bullet=+,-,0$.
  Using $\partial(Y_\sigma \times X) = (-1)^{n-p} Y_\sigma \times \partial X$, we have $\partial V = \partial_+V \cup \partial_0V \cup -\partial_-V$.
  Also, $V$ is over $K^{(p)}$ via $Y_\sigma\times V_\sigma \to V_\sigma \to K^{(p)}$.
  Since $\partial_0 V_\sigma$ is over $K^{(p-1)}$, $\partial_0 V$ is over~$K^{(p-1)}$.

  To construct the next building block, recall from Theorem~\ref{theorem:simplicial-cellular-transversality-cobordism} that for each $\tau$, there is an associated $(n-p+1)$-submanifold $Z_\tau$ in~$M$.
  We slightly modify $Z_\tau$, as detailed below.

  Since $X$ is oriented, the oriented manifold $Z_\tau$ satisfies $\partial Z_\tau = \bigcup_\sigma d_{\tau\sigma} \cdot Y_\sigma$ algebraically, by Theorem~\ref{theorem:simplicial-cellular-transversality-cobordism}~\ref{item:inverse-image-oriented-cobordism}.
  For each algebraically canceling pair of boundary components of $Z_\tau$ which are copies of $Y_\sigma$, attach a copy of $Y_\sigma\times I$ to eliminate the pair.
  Abusing the notation, denote the resulting manifold by~$Z_\tau$.
  Now $\partial Z_\tau$ is (geometrically) equal to $\bigcup_\sigma d_{\tau\sigma} \cdot Y_\sigma$ as an oriented manifold.
  For later use, recall that the original $Z_\tau$ is equipped with a cell complex structure with at most $C(n)\cdot\Delta(X)$ cells each of which has at most $C(n)$ vertices, by Lemma~\ref{lemma:transversality-cell-structures}~\ref{item:cell-structure-Z_tau}.
  Our modified $Z_\tau$ has the same property, since the total number of cells in the additional copies of $Y_\sigma\times I$ is at most the number of cells of $\partial Z_\tau \times I$, which is at most 3 times the number of cells of~$Z_\tau$.
  
  Let
  \begin{equation}
    \label{equation:definition-of-U}
    U = \Biggl( \bigcup_{\eta,\tau} s_{\eta\tau} \cdot Z_\tau \times \Delta_\eta^p \Biggr)
    \cup \Biggl( \bigcup_{\mu,\sigma} (-1)^{n-p} r_{\mu\sigma} \cdot Y_\sigma \times \Delta_\mu^{p+1} \Biggr).
  \end{equation}
  Then we have $\partial U = \partial_+U \cup -\partial_-U$ where
  \[
    \begin{aligned}
      \partial_-U &= \Biggl( \bigcup_{\eta,\tau,\sigma} s_{\eta\tau} d_{\tau\sigma}\cdot Y_\sigma \times \Delta_\eta^p \Biggr)
      \cup \Biggl( \bigcup_{\mu,\sigma} r_{\mu\sigma} \cdot Y_\sigma \times \partial\Delta_\mu^{p+1} \Biggr),
      \\
      \partial_+U &= (-1)^{n-p+1} \Biggl( \bigcup_{\eta,\tau} s_{\mu\tau} \cdot Z_\tau \times \partial\Delta_\eta^p \Biggr).
    \end{aligned}
  \]
  The $(p+1)$-manifold $U$ is over $K^{(p+1)}$ via the maps $Z_\tau \times \Delta_\eta^p \to \Delta_\eta^p \to K^{(p)}$ and $Y_\sigma \times \Delta_\mu^{p+1} \to \Delta_\mu^{p+1} \to K^{(p+1)}$.
  Note that $\partial_+U$ is over $K^{(p-1)}$ since so is~$\partial\Delta_\eta^p$.
  Also, $\partial_-U = \partial_+V$.

  Concatenate $V$ and $U$ along $\partial_+V = \partial_-U$, to obtain $Z=V \cup U$, which is over $K^{(p+1)}$.
  We have $\partial Z = \partial_+Z \cup -\partial_- Z$ where $\partial_-Z = \partial_-V = \bigcup_\sigma Y_\sigma \times \Delta_\sigma^p$ and $\partial_+Z = \partial_0 V \cup \partial_+ U$.
  See the schematic diagram in Figure~\ref{figure:bordism-Z}.
  Note that $\partial_+Z$ is over~$K^{(p-1)}$.

  \begin{figure}[h]
    \begin{tikzpicture}[
      x=12mm,y=13mm,scale=1,>=stealth,
      md/.style={line width=1pt,line cap=butt},
      mdover/.style={line width=2pt,color=white,double=black,
        double distance=1pt,line cap=butt},
      thin/.style={md,line width=.5pt},
      thinover/.style={mdover,line width=1.5pt,double distance=.5pt},
      md,
    ]
      \def\c#1{coordinate(#1) node[text=blue]{\tiny#1}} 
      \def\c#1{coordinate(#1)} 
      \draw (0,0) -- ++(2,0) \c{A} arc(90:-90:.5 and 1)
      -- ++(-2,0) arc(-90:270:.5 and 1);
      \draw[thin] (A) arc(90:270:.5 and 1);
      \draw (A) arc(90:-90:2 and 1);
      \draw (1,-1) node{$V$} ++(2.1,0) node{$U$};
      \draw (0,-1) node{\small $\partial_- V$};
      \draw (1,-2) node[anchor=north]{\small $\partial_0 V$};
      \draw (2,-1) node{\small
        $\begin{array}{l} \partial_+ V \\ = \partial_- U \end{array}$};
      \draw (4,-1) node[anchor=west]{\small $\partial_+ U$};
    \end{tikzpicture}
    \caption{The bordism $Z=V \cupover{\partial_+V = \partial_-U} U$}
    \label{figure:bordism-Z}
  \end{figure}

  Now, attach $Z$ to $M\times I$, identifying $\partial_-Z = \bigcup_\sigma Y_\sigma \times \Delta_\sigma^p$ with the canonical regular neighborhood $\bigcup_\sigma Y_\sigma \times \sigma_0$ of $\bigcup_\sigma Y_\sigma$ in $M=M\times 1$, to obtain a cobordism $W=(M\times I) \cup Z$ from $M=M\times 0$ to $N=E \cup_\partial \partial_+Z$ where $E=M \sm \bigcup_\sigma \inte(Y_\sigma \times \sigma_0)$ is the exterior of $\bigcup_\sigma Y_\sigma$ in~$M$.

  To make $W = (M\times I) \cup Z$ over $K^{(p+1)}$, we will use the given $f\colon M\to L\subset K^{(p)}$ and the above $Z\to K^{(p+1)}$.
  The restrictions of $f$ on $\bigcup_\sigma Y_\sigma \times \sigma_0$ and $Z\to K^{(p+1)}$ on $\partial_-Z = \bigcup_\sigma Y_\sigma \times \Delta_\sigma^p$ do not agree since $\sigma_0\subsetneq\sigma$, but one can deform $f$ slightly by expanding $\sigma_0$ to~$\sigma$.
  More precisely, the identity map of $L$ is homotopic rel $L^{(p-1)}$ to a map which sends $\inte\sigma_0$ onto $\inte\sigma$ for each~$\sigma$.
  Compose the homotopy $L\times I \to L$ with $\phi\times\id \colon M\times I \to L\times I$ to make $M\times I$ over~$L$.
  Then, $M\times I \to L \subset K^{(p)}$ agrees with $Z\to K^{(p+1)}$ on $\bigcup_\sigma Y_\sigma \times \sigma_0 = \partial_-Z$, so that they induce $W \to K^{(p+1)}$.
  Since the exterior $E$ and $\partial_+Z$ are over~$K^{(p-1)}$, $N$ is over $K^{(p-1)}$ as desired.

  It remains to estimate the complexity of~$W$.
  To do this, we will first construct cell complex structures of the building blocks $V$, $U$ and $M\times I$, which are consistent on the intersections so that they define a cell complex structure of~$W$.
  Details are as follows.

  By Lemma~\ref{lemma:transversality-cell-structures}~\ref{item:cell-structure-Y_sigma}, $\bigcup_\sigma Y_\sigma$ has a cell complex structure with at most $C(n)\cdot \Delta(M)$ cells, each of which has at most $C(n)$ vertices.
  By Lemma~\ref{lemma:transversality-cell-structures}~\ref{item:cell-structure-X}, $M$ has a cell complex structure with at most $C(n)\cdot \Delta(M)$ cells such that each cell has at most $C(n)$ vertices and the product regular neighborhood $\bigcup_\sigma Y_\sigma \times \sigma_0$ of $\bigcup_\sigma Y_\sigma$ is a cell subcomplex.
  (Here, recall that $\sigma_0 \subset \inte\sigma$ is a $p$-simplex neighborhood of the barycenter~$\hat\sigma$.)

  Note that for each~$\sigma$, since $\sum_\tau |d_{\tau\sigma}| \le p+1 \le n+1$ and $\sum_\mu |r_{\mu\sigma}|$, $\sum_\eta |s_{\eta\tau}| \le \|P\|$, there are at most $(n+1)\cdot \|P\| + \|P\| +1$ $(\pm1)$-signed $p$-simplices in~\eqref{equation:chain-homotopy-equation}.
  Take the product cell complex structures for $(\partial_+ V_\sigma \cup -\partial_- V_\sigma)\times I$ and for the 1-handles, to obtain a cell complex structure of $V_\sigma$ with at most $C(n,\|P\|)$ cells each of which has at most $C(n)$ vertices.
  Combining this with the above property of the cell complex structure of~$Y_\sigma$, it follows that the product cell complex structure on $V = (-1)^{n-p} \bigcup_\sigma Y_\sigma \times V_\sigma$ has at most $C(n,\|P\|) \cdot \Delta(M)$ cells each of which has at most $C(n)$ vertices.

  By Lemma~\ref{lemma:transversality-cell-structures}~\ref{item:cell-structure-Z_tau}, $\bigcup_\tau Z_\tau$ has a cell structure with at most $C(n) \cdot \Delta(M)$ cells each of which has at most $C(n)$ vertices.
  Use the product cell structure to obtain a cell complex structure of $U$ defined in~\eqref{equation:definition-of-U}.

  The above cell complex structures of $V$ and $U$ agree on $\partial_+V = \partial_-U$ since both are obtained by the product construction.
  Thus they define a cell complex structure of $Z=V\cup U$.
  By Lemma~\ref{lemma:transversality-cell-structures}~\ref{item:cell-structure-X},
  the cell subcomplex structure of $\bigcup_\sigma Y_\sigma\times \sigma_0 \subset M = M\times 1 \subset M\times I$ is equal to the product cell complex structure.
  Therefore the cell complex structures of $M\times I$ and $Z$ agree on $\bigcup_\sigma Y_\sigma\times \sigma_0 = \partial_- Z$ so that they define a cell complex structure of $W=(M\times I) \cup Z$.

  Now, fix an order of vertices of the cell complex structure of $W$ and apply the pulling triangulation in Lemma~\ref{lemma:pulling-triangulation} to triangulate~$W$.
  This triangulates the subcomplexes $M\times I$, $V$ and $U$ as well.

  Note that if $X$ and $Y$ are cell complexes with at most $k$ cells and if each cell has at most $r$ vertices, then $X\times Y$ is a cell complex with $k^2$ cells each of which has at most $r^2$ vertices.
  So, by applying the pulling triangulation in Lemma~\ref{lemma:pulling-triangulation}, $X\times Y$ has a triangulation with $\Delta(X\times Y)\le 2^{(r^2)} k^2$.
  From this and from the above upper bounds of the number of cells and vertices, it follows that
  \[
    \Delta(M\times I) \le C(n)\cdot \Delta(M), \quad \Delta(V) \le C(n,\|P\|)\cdot \Delta(M), \quad \Delta(U) \le C(n)\cdot \Delta(M).
  \]
  Therefore we have
  \[
    \Delta(W) \le \Delta(M\times I) + \Delta(V) + \Delta(U) \le C(n,\|P\|) \cdot \Delta(M).
  \]
  
  Finally, if $X$ is unoriented, then $Y_\sigma$ and $Z_\tau$ are unoriented, and we have $\partial Z_\tau = \bigcup_\sigma k_{\tau\sigma} Y_\sigma$ with $k_{\tau\sigma} \equiv d_{\tau\sigma} \bmod 2$ by Theorem~\ref{theorem:simplicial-cellular-transversality-cobordism}~\ref{item:inverse-image-cobordism}.
  Using this instead of the oriented case boundary $\partial Z_\tau = \bigcup_\sigma d_{\tau\sigma} Y_\sigma$, and by ignoring signs, the above argument proves the conclusion for the unoriented case.
\end{proof}

\begin{remark}
  \label{remark:comparison-with-dim-3}
  A simpler argument which proves Theorem~\ref{theorem:main-quantitative-bordism} for the special case of $n=\dim M=3$ appeared earlier in~\cite{Cha:2014-1}.
  Briefly, to construct bordism which corresponds to our $W_p$ for $0<p<\dim 3$, the arguments in~\cite{Cha:2014-1} rely on the quantitative vanishing of $\Omega_q$ for $0<q<3$, more precisely, the fact that a closed $q$-manifold bounds a linear null-cobordism.
  This is used to quantitatively understand the vanishing of the $E^2$ page $E^2_{p,q}=H_p(\Gamma;\Omega_{3-p})$ of the Atiyah-Hirzebruch spectral sequence in~\cite{Cha:2014-1}.
  Obviously, it does not apply to high dimensions since $\Omega_q\ne 0$ in general.
  In this regard, for the general case of arbitrary dimension, it is essential to use the (partial) chain homotopy in dimension $p$ and $p-1$ in Proposition~\ref{proposition:bordism-toward-lower-skeleton} and the geometric arguments in Section~\ref{subsection:bordism-from-chain-null-homotopy} that build on the simplicial transversality in Section~\ref{section:simplicial-cellular-transversality}.
\end{remark}

\section{Quantitative smooth bordism}
\label{section:quantitative-smooth-bordism}

Recall that the complexity $V(M)$ of a smooth manifold $M$ is defined to be the infimum of the volume of a Riemannian structure on $M$ with bounded local geometry, i.e.,\ $($injectivity radius$) \ge 1$ and $|$sectional curvature$| \le 1$.
If $\partial M$ is nonempty, then we require that the Riemannian structure restricts to a product structure on a collar $\partial M \times I$ where $I$ has unit length.
Note that this implies that $V(\partial M) \le V(M)$.

The goal of this section is to prove the following:

\begin{theorem-named}[Theorem~\ref{theorem:main-quantitative-smooth-bordism}]
  Let $M$ be a smooth $n$-manifold over the Baumslag-Dyer-Heller acyclic group $\cA(G)$, with $G$ arbitrary.
  Then there exists a smooth bordism $W$ over $\cA(G)$ from $M$ to a trivial end such that $V(W) \le C(n) \cdot V(M)$, where $C(n)$ is a constant depending only on~$n$.
  In addition, if $M$ is oriented, then $W$ is an oriented bordism.
\end{theorem-named}

\begin{proof}
  There is a (piecewise smooth) triangulation of $M$ such that $\Delta(M) \le C(n)\cdot V(M)$ and each vertex is contained in at most $C(n)$ simplices, by~\cite[Theorem~3]{Boissonnat-Dyer-Ghosh:2018-1}.
  See also~\cite[Theorem~3.2 in Appendix~A]{Chambers-Dotterrer-Manin-Weinberger:2018-1} and~\cite[Theorem~A.1]{Manin-Weinberger:2023-1}.

  By Theorem~\ref{theorem:main-quantitative-bordism}, there is a PL bordism $W$ over $\Gamma$ from $M$ to a trivial end~$N$, with $\Delta(W) \le C(n)\cdot \Delta(M)$.

  We assert that the PL bordism $W$ is smoothable.
  To show this, recall that the proof of Theorem~\ref{theorem:main-quantitative-bordism} constructs $W$ by stacking bordisms obtained by inductive applications of Proposition~\ref{proposition:bordism-toward-lower-skeleton}.
  In each step, Proposition~\ref{proposition:bordism-toward-lower-skeleton} produces a bordism $W_p$ between $M_p$ and $M_{p-1}$, $p=n,n-1,\ldots, 1$, where $M_n$ is the given~$M$ and $M_0$ is the trivial end~$N$.
  The bordism $W_p$ is obtained from the submanifolds $Y_\sigma$ and $Z_\tau$ in $M_p$ given by Theorems~\ref{theorem:simplicial-cellular-transversality-submanifold} and~\ref{theorem:simplicial-cellular-transversality-cobordism}.
  From the proof of Proposition~\ref{proposition:bordism-toward-lower-skeleton}, it follows that $W_p$ is smoothable if $Y_\sigma$ and $Z_\tau$ are smoothable.

  By Theorem~\ref{theorem:simplicial-cellular-transversality-submanifold}~\ref{item:simplicial-cellular-transversality}, $Y_\sigma$ has a product regular neighborhood $Y_\sigma\times \Delta^p$ in~$M_p$, $\Delta^p$ a $p$-simplex.
  By induction, $M_p$ is smoothable, and thus the codimension zero open submanifold $Y_\sigma \times \inte\Delta^p \cong Y_\sigma \times \R^p$ in $M_p$ is smoothable.
  So, by the product structure theorem for smoothings of PL manifolds~\cite[Theorem~7.7]{Hirsch-Mazur:1974-1}, $Y_\sigma$ is smoothable.
  Since $Z_\tau$ has a product regular neighborhood in $M_p$ by Theorem~\ref{theorem:simplicial-cellular-transversality-cobordism}~\ref{item:inverse-image-cobordism}, the same argument shows that $Z_\tau$ is smoothable.
  This proves the assertion.

  We need one more observation:
  in the triangulation of the PL bordism $W$, each vertex is contained in at most $C(n)$ simplices.
  This follows because our triangulation of $M$ has the same property, since each step of the construction of $W$ in the proof of Theorem~\ref{theorem:main-quantitative-bordism} raises the bound at most by a $C(n)$ factor.

  Now, we invoke the following quantitative smoothing result:

  \begin{theorem}[{\cite[Theorem~A.2]{Manin-Weinberger:2023-1}}]
    If $W$ is a triangulated PL $n$-manifold such that each vertex is contained in at most $L$ simplices, then every smoothing of $W$ has a Riemannian structure with bounded local geometry such that each simplex of $W$ has volume at most~$C(n,L)$.
  \end{theorem}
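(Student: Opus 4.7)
The plan is to exploit the combinatorial boundedness hypothesis, which ensures that the simplicial isomorphism type of the closed star $\st(v)$ of any vertex $v$ belongs to a finite set $\{T_1,\ldots,T_N\}$ depending only on $n$ and $L$. For each $T_i$, I would fix once and for all a reference smooth Riemannian model $(\cN_i,g_i)$ in which $T_i$ sits as a PL subcomplex with piecewise-smoothly straightened simplices, and for which sectional curvature, injectivity radius, and the volume of every simplex of $T_i$ are bounded by constants that depend only on the finite list. This is possible because each $(\cN_i,g_i)$ is a single compact Riemannian piece; the universal bounds $\kappa(n,L)$, $r(n,L)$, and $C_0(n,L)$ come just from taking the maximum over the finite collection.

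Given any smoothing of $W$, around each vertex $v$ I would fix a smooth chart $\varphi_v\colon U_v \to \cN_{i(v)}$ that is a PL isomorphism from a neighborhood of $\st(v)$ onto $T_{i(v)}$; such a chart exists because a PL isomorphism between abstract stars is always smoothable against a given smoothing of a star neighborhood. Pulling back $g_{i(v)}$ along $\varphi_v$ yields a local metric $g_v$ on $U_v$. Then I would take a smooth partition of unity $\{\rho_v\}$ subordinate to $\{U_v\}$ with $C^k$-norms controlled by constants depending only on $L$, and set $g = \sum_v \rho_v g_v$. Since each point of $W$ lies in at most $L$ of the open stars, $g$ is pointwise a convex sum of at most $L$ reference metrics drawn from the finite list, so it inherits uniform bounds on sectional curvature and injectivity radius from the reference models (after a harmless uniform rescaling). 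Each simplex $\sigma$ meets at most $L+1$ of the stars $U_v$, and its $g$-volume is thus bounded by an average of $g_{i(v)}$-volumes of the matching reference simplices, giving $\vol_g(\sigma) \le C(n,L)$.

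The main technical hurdle will be reconciling smoothness of $g$ with uniform control under the partition-of-unity summation; this reduces to choosing the $\rho_v$ with $C^k$-norms bounded in terms of $L$, which is standard once each $U_v$ has bounded combinatorial geometry and the stars have uniformly comparable ``size'' in the model charts. A secondary issue is producing the required product structure on a unit-length collar of $\partial W$: here I would prearrange each reference model $(\cN_i,g_i)$ to be a Riemannian product near any faces of $T_i$ that lie on the boundary of a manifold, so that the partition-of-unity gluing automatically yields a metric which is a product on the collar. Once this is done, summing the per-simplex bound over the at most $C(n,L)\cdot\Delta(W)$ simplices of $W$ completes the passage back to a volume estimate, which is what Theorem~\ref{theorem:main-quantitative-smooth-bordism} ultimately needs.
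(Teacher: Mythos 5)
The result here is cited from \cite{Manin-Weinberger:2023-1} and not proved in this paper, so there is no in-paper proof to compare against; I am therefore assessing your argument on its own. There is a genuine gap at the step where you pass from the local pullback metrics $g_v = \varphi_v^* g_{i(v)}$ to the glued metric $g = \sum_v \rho_v\, g_v$ and claim that $g$ ``inherits uniform bounds on sectional curvature and injectivity radius from the reference models.'' A pointwise convex combination of Riemannian metrics does \emph{not} inherit curvature bounds from the summands: two flat metrics $g_1$ and $g_2 = \psi^*g_1$ on a disk can have $\tfrac12(g_1+g_2)$ with arbitrarily large curvature if the diffeomorphism $\psi$ has uncontrolled second derivatives. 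In your construction, the charts $\varphi_v$ and $\varphi_{v'}$ for adjacent vertices are chosen independently of one another, and nothing in the argument bounds the transition maps $\varphi_{v'}\circ\varphi_v^{-1}$ in $C^2$. The observation that ``a PL isomorphism between abstract stars is always smoothable against a given smoothing'' establishes only that \emph{some} smooth chart exists; it gives no control over the relative behavior of two such charts on an overlap, since two smoothings of the same PL map can differ by a wildly oscillating isotopy. The appeal to a partition of unity with ``$C^k$-norms bounded in terms of $L$'' presupposes exactly this uniform comparability of charts on overlaps and so is circular.

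A second, related gap: the injectivity radius is a global quantity, and purely chart-local bounds cannot rule out short closed geodesics without uniform control of the transition maps (and an argument that nearby charts agree on overlaps up to controlled distortion). Your remark about ``a harmless uniform rescaling'' addresses only the scale-invariance of the product of $(\text{inj.\ rad})^2$ and curvature, not the underlying need for a genuine lower bound on the injectivity radius before rescaling. To make your strategy rigorous you would have to construct the $\varphi_v$ so that the transitions are uniformly $C^2$-controlled --- for example by first fixing a compatible smooth (Whitehead) triangulation and building the metric inductively over skeleta, or by endowing $W$ with the piecewise-flat metric in which each simplex is a unit regular simplex and then performing a controlled smoothing near the lower skeleta. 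That controlled compatibility, rather than the finiteness of combinatorial star types, is the actual technical content of the cited theorem and cannot be elided by invoking a partition of unity.
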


  In our case, it follows that there is a Riemannian structure on $W$ with bounded local geometry such that the volume of each simplex is at most~$C(n)$.
  Thus $W$ has linearly bounded complexity $V(W) \le C(n)\cdot \Delta(W)$.
  In addition, the volume of the boundary component $N$ of $W$ is at most $C(n)\cdot \Delta(W)$ since $\Delta(N) \le \Delta(W)$.
  It follows that the linear bound $V(W) \le C(n)\cdot \Delta(W)$ remains true after attaching to $W$ a collar $N\times I$ endowed with the product Riemannian structure to satisfy the collar condition.
  Since $\Delta(W) \le C(n) \cdot \Delta(M)$ and $\Delta(M)\le C(n)\cdot V(M)$, this completes the proof.
\end{proof}

\section{Linear bounds for Cheeger-Gromov \texorpdfstring{$\rho$}{rho}-invariants}
\label{section:bounds-cheeger-gromov-rho}

In this section, we discuss how one obtains the linear bound for the Cheeger-Gromov $L^2$ $\rho$-invariant $\rho^{(2)}(M,\phi)$ stated in Theorem~\ref{theorem:main-rho-invariant}, from the existence of a linear bordism in Theorem~\ref{theorem:main-quantitative-bordism}\@.
Essentially we follow the arguments given in~\cite[Section~2]{Cha:2014-1}.

\begin{proof}[Proof of Theorem~\ref{theorem:main-rho-invariant}]

  Let $M$ be a closed oriented PL $(4k-1)$-manifold and $\phi\colon \pi_1(M)\to G$ is an arbitrary group homomorphism.
  We may assume that $M$ is connected, since $\rho^{(2)}(-)$ and $\Delta(-)$ are additive under disjoint union.
  By Theorem~\ref{theorem:main-quantitative-bordism}, there is an oriented PL bordism $W$ from $M$ to a trivial end $N$ over $\cA(\pi_1(M))$ with $\Delta(W) \le C(n)\cdot \Delta(M)$, where $N$ is over $\cA(\pi_1(M))$ via a trivial map.
  By the functoriality of $\cA(-)$, we have the following commutative diagram.
  \[
    \begin{tikzcd}[sep=large]
      \pi_1(M) \ar[r,"\phi"] \ar[d, "i_*"] \ar[rd,hook,"i_{\pi_1(M)}"]
      & G \ar[rd,hook,"i_{G}"]
      \\
      \pi_1(W) \ar[r]
      & \cA(\pi_1(M)) \ar[r,"\cA(\phi)"']
      & \cA(G) = \Gamma
    \end{tikzcd}
  \]

  We need the following facts, which are readily obtained from standard properties of the $\rho$-invariants:

  \begin{enumerate}
    \item Since $i_G$ is injective, $\rho^{(2)}(M,\phi) = \rho^{(2)}(M,i_G\circ\phi)$ by $L^2$-induction~\cite[eq.~(2.3)]{Cheeger-Gromov:1985-1}, \cite[p.~253]{Lueck:2002-1}, \cite[Proposition~5.13]{Cochran-Orr-Teichner:1999-1}.
    \item Since $N$ is over $\Gamma$ via a trivial map and $W$ is a bordism between $M$ and $N$ over $\Gamma$, from the $L^2$-signature defect definition of $\rho^{(2)}(M)$ it follows that
    \[
      \rho^{(2)}(M,i_G\circ\phi) = \sign^{(2)}_\Gamma(W) - \sign(W)
    \]
    where $\sign(W)$ is the ordinary signature of the $4k$-manifold $W$ and $\sign^{(2)}_\Gamma(W)$ is the $L^2$-signature of the intersection form
    \[
      H_{2k}(W;\cN\Gamma)\times H_{2k}(W;\cN\Gamma) \to \cN\Gamma
    \]
    over the group von Neumann algebra~$\cN\Gamma$ (e.g., see~\cite{Chang-Weinberger:2003-1}, \cite[Section~2.1]{Cha:2014-1}).
    We only need the fact that $|\sign^{(2)}_\Gamma(W)|$ is bounded by the $L^2$-dimension of the $\cN\Gamma$-module $H_{2k}(W;\cN\Gamma)$, which is in turn bounded by the number of $2k$-handles in a handle decomposition of~$W$.
    This should be viewed as the $L^2$ version of the same bound for the ordinary signature.
    It follows that $|\rho^{(2)}(M,i_G\circ\phi)|$ is at most twice the number of $2k$-handles of~$W$.
  \end{enumerate}

  Now, since $W$ has a PL triangulation with at most $C(n)\cdot \Delta(M)$ simplices, there is a handle decomposition of $W$ with at most $C'(n)\cdot \Delta(M)$ handles.
  By (1) and (2) above, it follows that
  \[
    |\rho^{(2)}(M,\phi)| = |\rho^{(2)}(M,i_G\circ\phi)| \le 2\cdot C'(n)\cdot \Delta(M).
    \qedhere
  \]
\end{proof}

\subsection{Applications to complexity estimates and stable complexity}

\begin{theorem}
  \label{theorem:complexity-additivity}
  Let $M$ be a closed orientable PL $n$-manifold with $n=4k-1$, and $\phi$ be an arbitrary group homomorphism of~$\pi_1(M)$.
  Then
  \[
    C(4k-1) \cdot r \cdot |\rho^{(2)}(M,\phi)| \le \Delta(\#^r M) \le r\cdot \Delta(M)
  \]
  where $C(4k-1)>0$ is a constant depending only on~$n=\dim M$.
\end{theorem}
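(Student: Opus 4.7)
The upper bound $\Delta(\#^r M) \le r\cdot \Delta(M)$ is the subadditivity~\eqref{equation:sum-subadditivity} already noted in the introduction, obtained by removing a top-dimensional simplex from each of $r$ triangulated copies of $M$ and gluing their boundaries pairwise. So the content of the theorem lies in the lower bound, which I plan to obtain as a direct consequence of Theorem~\ref{theorem:main-rho-invariant} combined with additivity of the $L^2$ $\rho$-invariant under connected sum.

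First, I would fix a homomorphism $\phi\colon \pi_1(M)\to G$ and construct a homomorphism $\psi$ on $\pi_1(\#^r M)$ whose $\rho$-invariant is $r$ times the $\rho$-invariant of $(M,\phi)$. Using $\pi_1(\#^r M)\cong \pi_1(M)*\cdots*\pi_1(M)$ and the universal property of the free product, I define $\psi\colon \pi_1(\#^r M) \to G*\cdots*G$ ($r$-fold free product) by sending the $i$-th copy of $\pi_1(M)$ to the $i$-th copy of $G$ via $\phi$. Then I would invoke additivity of the Cheeger-Gromov $\rho$-invariant under connected sum to conclude
\[
  \rho^{(2)}(\#^r M, \psi) = r\cdot \rho^{(2)}(M,\phi).
\]
This additivity is standard from the $L^2$-signature defect viewpoint used in the proof of Theorem~\ref{theorem:main-rho-invariant}: the disjoint union $\bigsqcup^r M$ is bordant to $\#^r M$ over $G*\cdots*G$ via a $4k$-manifold obtained by attaching $(r-1)$ 1-handles to $(\bigsqcup^r M)\times I$, and 1-handles contribute nothing to either the ordinary signature or the $L^2$-signature of the bordism; combined with additivity of $\rho^{(2)}(-)$ under disjoint union (with $\phi$ restricted appropriately to each summand, lifted via the canonical inclusions to the free product), one obtains the claimed equation.

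Applying Theorem~\ref{theorem:main-rho-invariant} to the closed oriented PL $(4k-1)$-manifold $\#^r M$ with the homomorphism $\psi$ yields
\[
  r\cdot |\rho^{(2)}(M,\phi)| = |\rho^{(2)}(\#^r M,\psi)| \le C(4k-1)\cdot \Delta(\#^r M),
\]
which after renaming the constant is exactly the desired lower bound. The main (modest) obstacle is simply to justify the additivity of $\rho^{(2)}$ under connected sum for arbitrary target groups; everything else is bookkeeping with the universal property of free products together with the already-established Theorem~\ref{theorem:main-rho-invariant}. Dividing by $r$ and passing to the limit also gives, as a corollary to be recorded afterwards, the stated bound $\Delta^{\mathrm{st}}(M)\ge C(4k-1)\cdot |\rho^{(2)}(M,\phi)|$ of Theorem~\ref{theorem:sum-stable-complexity-bound}.
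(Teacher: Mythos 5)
Your proposal is correct and follows essentially the same route as the paper: subadditivity for the upper bound, and for the lower bound the same bordism from $\bigsqcup^r M$ to $\#^r M$ built from $r-1$ 1-handles, whose signatures vanish, giving $\rho^{(2)}(\#^r M,\psi)=r\cdot\rho^{(2)}(M,\phi)$, followed by Theorem~\ref{theorem:main-rho-invariant}. The only cosmetic difference is that you map into the $r$-fold free product $G*\cdots*G$ rather than folding back into $G$ as the paper does; this is fine because the inclusions $G\hookrightarrow G*\cdots*G$ are injective, so $L^2$-induction (already used in the paper) guarantees the $\rho$-invariants of the summands are unchanged.
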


\begin{proof}
  We have $\Delta(M\# N) \le \Delta(M) + \Delta(N)$ by constructing a triangulation of $M\# N$ from triangulations of $M$ and~$N$.
  From this, the right-hand side inequality follows.

  For the lower bound, we use the $\rho$-invariant as follows.
  Let
  \[
    \psi\colon \pi_1(\#^r M) = \coprod^r \pi_1(M) \to G
  \]
  be the homomorphism of the free product induced by ($r$ copies of) $\phi\colon \pi_1(M)\to G$.
  Then we have
  \begin{equation}
    \label{equation:rho-invariant-connected-sum}
    \rho^{(2)}(\#^r M,\psi) = r\cdot \rho^{(2)}(M,\phi).
  \end{equation}
  To see this, let $W$ be the bordism from the disjoint union $rM=\bigsqcup^r M$ to $\#^r M$ over $\Gamma$, which one obtains by attaching $r-1$ 1-handles to~$rM\times I$.
  Since there are no middle dimensional handles, both the ordinary signature $\sign(W)$ and the $L^2$-signatures $\sign^{(2)}_\Gamma(W)$ vanish.
  It follows that
  \[
    \rho^{(2)}(\#^r M,\psi) - \rho^{(2)}(rM, r\phi) = \sign^{(2)}_\Gamma(W) - \sign(W) = 0.
  \]
  By additivity under disjoint union, $\rho^{(2)}(rM, r\phi) = r \cdot \rho^{(2)}(M,\phi)$.
  Thus~\eqref{equation:rho-invariant-connected-sum} holds as claimed.

  Now, apply Theorem~\ref{theorem:main-rho-invariant} to $(\#^r M, \psi)$, to obtain
  \[
    \Delta(\#^r M) \ge C(4k-1)\cdot |\rho^{(2)}(\#^r M,\psi)| = C(4k-1)\cdot r \cdot |\rho^{(2)}(M,\phi)|.
    \qedhere
  \]
\end{proof}

Recall that Theorem~\ref{theorem:sum-stable-complexity-bound} asserts the following:
if $M$ is a closed orientable PL $(4k-1)$-manifold, then the inequality
\[
  C(4k-1) \cdot |\rho^{(2)}(M,\phi)| \le \Delta^{st}(M) \le \Delta(M)
\]
holds for all homomorphisms $\phi$ of~$\pi_1(M)$.
Consequently, $\Delta^{\text{st}}(M)>0$ if $\rho^{(2)}(M,\phi)\ne 0$ for some~$\phi$.

\begin{proof}[Prof of Theorem~\ref{theorem:sum-stable-complexity-bound}]
  Recall that the sum-stable complexity $\Delta^{\text{st}}(M)$ is defined by
  \[
    \Delta^{\text{st}}(M) = \lim_{r\to \infty} \frac{\Delta(\#^r M)}{r}.
  \]
  From this and the inequalities in Theorem~\ref{theorem:complexity-additivity}, Theorem~\ref{theorem:sum-stable-complexity-bound} follows immediately.
\end{proof}

As an example, consider the lens space $L_N=L(N;1,\ldots,1)$ of dimension $4k-1$ ($k\ge 1$) with $\pi_1(L_N)=\Z_N$.
Suppose $N>2$.
Let $\id=\id_{\pi_1(L_N)}$ be the identity.
Then it is known that $\rho^{(2)}(L_N,\id) \ge C'(4k-1) \cdot \Delta(L_N)$ for some constant $C'(4k-1)>0$ depending only on $4k-1=\dim L_N$.
In fact, $\rho^{(2)}(L_N,\id) = \frac1N \sum_{k=1}^{N-1} \cot^{2k} (k\pi / N)$ by computations in~\cite{Atiyah-Bott:1968-1,Atiyah-Patodi-Singer:1975-2}.
From this it follows that $\rho^{(2)}(L_N,\id) \ge (\frac13)^{3k} N^{2k-1}$ for $N>2$.
In the first paragraph of~\cite[Proof of Theorem~1.3]{Lim-Weinberger:2023-1}, a triangulation for $L_N$ is constructed by induction on the dimension, to show that $\Delta(L_N)\le C'' \cdot N^{2k-1}$ for some constant $C''=C''(4k-1)$.

From this, we obtain the following corollary to Theorem~\ref{theorem:sum-stable-complexity-bound} which is stated in the introduction:
for all $N>2$,
\[
  C(4k-1)\cdot \Delta(L_N) \le \Delta^{\mathrm{st}}(L_N) \le \Delta(L_N)
\]
where $C(4k-1)>0$ is a constant depending only on $4k-1=\dim L_N$.

\begin{remark}
  An open problem that appears interesting on its own is whether the value of $\Delta^{\mathrm{st}}(M)$ is always a rational number.
  We do not address it in this paper.
\end{remark}

\subsection{Unbounded complexity within a fixed simple homotopy type}

\begin{proof}[Proof of Theorem~\ref{theorem:unbounded-complexity-fixed-homotopy-type}]
  Let $M$ be a closed orientable $(4k-1)$-manifold with an element $g$ of order $d$ in~$\pi_1(M)$, $1<d<\infty$.
  Our goal is to obtain a $(4k-1)$-manifold $N$ which is simple homotopy equivalent to $M$ and has arbitrarily large complexity.

  We use the argument of the proof of~\cite[Proof of Theorem~1]{Chang-Weinberger:2003-1}, combining it with Theorem~\ref{theorem:main-rho-invariant}, as spelled out below.
  The inclusion $i\colon \Z_d \hookrightarrow \pi_1(M)$ sending $1$ to $g$ induces a homomorphism $i_*\colon L^s_{4k}(\Z[\Z_d]) \to L^s_{4k}(\Z[\pi_1(M)])$ on the $L$-groups.
  Let $\alpha$ be a form over $\Z[\Z_d]$ representing an element in $L^s_{4k}(\Z[\Z_d])$.
  The Wall realization gives a bordism $W$, over $M$, from $(M,\id_M)$ to $(N,f)$, where $N$ is a $(4k-1)$-manifold equipped with a simple homotopy equivalence $f\colon N\to M$, such that the intersection form of $W$ is $i_* \alpha$.
  Let $f_*\colon \pi_1(N) \to \pi_1(M)$ be the induced homomorphism.
  Then we have
  \[
    \rho^{(2)}(N,f_*) - \rho^{(2)}(M,\id_{\pi_1(M)}) = \sign^{(2)}_{\pi_1(M)}(i_* \alpha) = \sign^{(2)}_{\Z_d}(\alpha)
  \]
  by the $L^2$-signature defect formula and $L^2$-induction.
  From this and Theorem~\ref{theorem:main-rho-invariant}, it follows that the complexity of $N$ is large if $\sign^{(2)}_{\Z_d}(\alpha)$ is large.
  So the desired conclusion follows if $\smash{\sign^{(2)}_{\Z_d}} \colon L^s_{4k}(\Z[\Z_d]) \to \R$ has unbounded image.
  Since $\smash{\sign^{(2)}_{\Z_d}}$ is additive under direct sum and since $L^s_{4k}(\Z[\Z_d])$ is isomorphic, modulo torsion, to the $L$-group of symmetric forms on projective $\Q[\Z_d]$-modules, it suffices to observe that the $1\times 1$ form $\beta=[1]$ on the projective $\Q[\Z_d]$-module~$\Q$ satisfies $\smash{\sign^{(2)}_{\Z_d}}(\beta) = \frac1d - 1 \ne 0$.
  Here we use that the $L^2$-dimension over $\cN \Z_d = \C[\Z_d]$ is equal to $\frac1d \cdot \dim_\C$.
\end{proof}

\section{Quantitative bordism for 1-manifolds}
\label{section:questions-for-n=1}

Question~\ref{question:linear-bordism-over-acyclic} asks whether every closed PL $n$-manifold $M$ over an acyclic group~$G$ admits a PL bordism $W$ from $M$ to a trivial end over $G$ with linearly bounded complexity $\Delta(W) \le C(n)\cdot \Delta(M)$, for $n > 1$.
In what follows, we consider the case of $n=1$.

\begin{proposition}
  \label{proposition:questions-for-n<=2}
  The answer to the oriented case of Question~\ref{question:linear-bordism-over-acyclic} is negative for $n=1$.
\end{proposition}

\begin{proof}
  The following approach, which uses the commutator length in a group, was suggested by Shmuel Weinberger.
  Let $f\colon S^1\to BG$ be a map.
  Let $\alpha\in G$ be the image of a generator of $\pi_1(S^1)$ under $f_*\colon \pi_1(S^1) \to \pi_1(BG)$.
  If $S^1$ bounds an oriented surface of genus $g$ over $BG$, then $\alpha$ is a product of $g$ commutators $[a_i,b_i]$, $a_i,b_i\in G$.
  That is, the commutator length
  \[
    \cl(\alpha) = \min\{k\ge 0 \mid \alpha \text{ is expressed as a product of $k$ commutators}\}
  \]
  is at most~$g$.
  Note that the complexity of a surface is bounded from below by the genus.
  Also, if there is a bordism $W$ from $f\colon S^1\to BG$ to a trivial end with complexity $\Delta(W)$, then there is a null-bordism for $f$ with complexity${}\le 2\cdot \Delta(W)$, by capping off boundary components other than~$f$.
  It follows that $\cl(\alpha)/2$ is a lower bound for~$\Delta(W)$.

  If $G$ is a free group $F$ of rank${}>1$, there exists an element $g$ in $[F,F]$ with arbitrarily large~$\cl(g)$.
  (For instance, $\cl([a,b]^k)=\lfloor k/2 \rfloor + 1$ in $F=F\langle a,b\rangle$~\cite{Culler:1981-1}.)
  It follows that Question~\ref{question:linear-bordism}, which is a stronger version of Question~\ref{question:linear-bordism-over-acyclic}, has a negative answer for $n=1$.

  Moreover, there is an acyclic group that has an element with arbitrarily large commutator length.
  For instance, there exists a nontrivial torsion-free acyclic hyperbolic group~$G$.%
  \footnote{
    Following Wilton and Lyman~\cite{Wilton-Lyman:2020-1}, $G = \langle a,b \mid a[a,b]^5, b^6a^2b^3a^{-1}b^{-8}a^{-1}\rangle$ is an example:
    it is perfect and satisfies the $C'(1/6)$ small cancellation condition.
    By small cancellation theory, $G$ is hyperbolic and the 2-complex $X$ associated with the presentation is aspherical, and thus $G$ is torsion-free.
    Also, $G$ is acyclic since $\tilde H_*(X)=0$.
  }
  In $G$, there exists an element $\alpha$ for which the stable commutator length $\scl(\alpha) = \lim_{r\to\infty} \cl(\alpha^r)/r$ is nonzero~\cite[p.~67]{Calegari:2009-1}, and consequently $\operatorname{cl}(\alpha^r)$ is unbounded.
  Therefore, an acyclic group counterexample is obtained by applying the same argument.
  This shows that the answer to Question~\ref{question:linear-bordism-over-acyclic} is negative for $n=1$.
\end{proof}

We remark that the same argument also proves a negative answer to the unoriented case of Question~\ref{question:linear-bordism} for $n=1$:
one obtains an explicit counterexample over the free groups $F=F\langle a_1,\ldots a_k\rangle$ of rank~$2k$, using the fact that $a_1^2\cdots a_k^2$ is not expressed as a product of fewer than $k$ squares in $F$~\cite{Lyndon-Newman:1973-1}.

\bibliographystyle{amsalpha-order}
\def\MR#1{}
\bibliography{research.bib}

\end{document}